\numberwithin{equation}{section}
\numberwithin{figure}{section}
\newcommand\qedsymbol{\hbox{$\Box$}}
\newcommand\qed{\relax\ifmmode\Box\else
  {\unskip\nobreak\hfil\penalty50\hskip1em\null\nobreak\hfil\qedsymbol
  \parfillskip=\z@\finalhyphendemerits=0\endgraf}\fi}
\newenvironment{proof}[1][{}]{\par\noindent \underline{Proof of {#1}}. }{\qed}
\newenvironment{proofOld}[0]{\par\noindent \underline{Proof}.}{\qed}
\newcommand{\bfzero}{{\mathbf{0} }}
\newcommand{\Coll}{\mathsf{Coll}}
\newcommand{\Cobar}{{\mathrm{Cobar} }}
\newcommand{\Ger}{\mathsf{Ger}}
\newcommand{\BV}{\mathsf{BV}}
\newcommand{\calc}{\mathsf{calc}}
\newcommand{\Br}{\mathsf{Br}}
\newcommand{\KS}{\mathsf{KS}}
\newcommand{\CBr}{\mathsf{CBr}}
\newcommand{\Conv}{{\mathrm{Conv}}}
\newcommand{\Hom}{\mathrm{Hom}}
\newcommand{\Av}{\mathrm{Av}}
\newcommand{\Curv}{\mathrm{Curv}}
\newcommand{\Der}{\mathrm{Der}}
\newcommand{\id}{\mathrm{id}}
\newcommand{\ti}[1]{{\tilde{#1}}}
\newcommand{\wt}[1]{{\widetilde{#1}}}
\newcommand{\dia}{\diamond}
\newcommand{\ds}{\diamondsuit}
\newcommand{\al}{{\alpha}}
\newcommand{\bul}{{\bullet}}
\newcommand{\ms}{{\mathfrak{s}}}
\newcommand{\cF}{{\mathcal{F}}}
\newcommand{\pa}{{\partial}}
\newcommand{\bs}{{\mathbf{s}}\,}
\newcommand{\bsi}{{\mathbf{s}^{-1}\,}}
\newcommand{\bt}{{\mathbf{t}}}
\newcommand{\bq}{{\mathbf{q}}}
\newcommand{\cC}{\mathcal{C}}
\newcommand{\cL}{{\mathcal{L}}}
\newcommand{\cD}{\mathcal{D}}
\newcommand{\cA}{\mathcal{A}}
\newcommand{\cB}{\mathcal{B}}
\newcommand{\cG}{\mathcal{G}}
\newcommand{\cH}{{\mathcal{H}}}
\newcommand{\cZ}{{\mathcal{Z}}}
\newcommand{\cO}{\mathcal{O}}
\newcommand{\bbK}{{\mathbb K}}
\newcommand{\bbC}{{\mathbb C}}
\newcommand{\bbR}{{\mathbb R}}
\newcommand{\bbZ}{{\mathbb Z}}
\newcommand{\bbQ}{{\mathbb Q}}
\newcommand{\Op}{{\mathbb{OP}}}
\newcommand{\La}{{\Lambda}}
\newcommand{\D}{{\Delta}}
\date{}
\newtheorem{thm}{Theorem}[section]
\newtheorem{defi}[thm]{Definition}
\newtheorem{lem}[thm]{Lemma}
\newtheorem{cor}[thm]{Corollary}
\newtheorem{prop}[thm]{Proposition}
\newtheorem{claim}[thm]{Claim}
\newtheorem{cond}[thm]{Condition}
\newtheorem{example}[thm]{Example}
\newtheorem{remark}[thm]{Remark}
\title{When can a formality quasi-isomorphism over $\bbQ$ be constructed recursively?}
\author{V. A. Dolgushev and G.E. Schneider}
\date{}
\begin{document}

\large

\maketitle

\begin{abstract}
Let $\cO$ be a differential graded (possibly colored) operad defined over rationals. 
Let us assume that there exists a zig-zag of quasi-isomorphisms connecting 
$\cO \otimes \bbK$ to its cohomology, where $\bbK$ is any field extension of $\bbQ$.  
We show that for a large class of such dg operads, a formality quasi-isomorphism 
for $\cO$ exists and can be constructed recursively. Every step of our recursive procedure 
involves a solution of a finite dimensional linear system and it requires no explicit knowledge 
about the zig-zag of quasi-isomorphisms connecting $\cO \otimes \bbK$ to its cohomology.
\end{abstract}

\section{Introduction}
A differential graded (dg) operad $\cO$ is {\it formal} if there exists a sequence of 
quasi-isomorphisms (of dg operads)
$$
\cO \,\stackrel{\sim}{\leftarrow}\, \bullet  \,\stackrel{\sim}{\rightarrow}\, \bullet
\,\stackrel{\sim}{\leftarrow}\, \bullet ~ \dots ~ \bullet  \,\stackrel{\sim}{\rightarrow}\, H^{\bul}(\cO)
$$
connecting $\cO$ to its cohomology $H^{\bul}(\cO)$. 
Formality for dg operads (and other algebraic structures) is a subtle phenomenon. 
Currently, there are no effective tools for determining whether a given dg operad 
is formal or not. Moreover, in various interesting examples (including the braces operad 
$\Br$ \cite{Br}, \cite{K-Soi}, \cite{M-Smith}, its ``framed'' version $\CBr$ \cite{Campos}, \cite{Ward} 
and the Kontsevich-Soibelman operad $\KS$ \cite{K-Soi1}, \cite{Thomas-KS}) all known proofs of formality require 
transcendental tools \cite{K-mot}, \cite{LV-formality}, \cite{Dima-disc}, \cite{Thomas-KS}. 

In this paper we consider a dg operad $\cO$ defined over the field $\bbQ$ of rationals 
and assume that $\cO \otimes_{\bbQ} \bbK$ is formal for some field 
extension\footnote{In concrete examples, $\bbK = \bbR$ or $\bbC$.} $\bbK$ of $\bbQ$.
We consider a cobar resolution $\Cobar(\cC) \stackrel{\sim}{\to} H^{\bul}(\cO)$ of 
$H^{\bul}(\cO)$ and show that, under some mild conditions on $\cO$ and on the 
resolution $\Cobar(\cC)$, there is an explicit algorithm which allows us to produce 
a formality quasi-isomorphism\footnote{Recall that $\cO$ is formal if and only if there exists 
a quasi-isomorphism of dg operads \eqref{desired}.} 
\begin{equation}
\label{desired}
\Cobar(\cC)  ~\stackrel{\sim}{\longrightarrow} ~ \cO
\end{equation}
over $\bbQ$ recursively. The proof that this algorithm works is based on the existence of 
a sequence of quasi-isomorphisms connecting $\cO \otimes_{\bbQ} \bbK$ to its cohomology. 
However, no explicit knowledge about this sequence of quasi-isomorphisms is required 
at any step of this algorithm. 

We would like to mention that the existence of a formality quasi-isomorphism \eqref{desired} over 
$\bbQ$ (from the existence of a formality quasi-isomorphism over an extension of $\bbQ$)
was proved in paper \cite{Roig-plus} by F. Guill\'en Santos, V. Navarro, P. Pascual, 
and A. Roig. More precisely, see Theorem 6.2.1 in {\it loc. cit.}

Our paper is organized as follows. 
In Section \ref{sec:prelim}, we recall some basic concepts and 
fix the notational conventions.  
In Section \ref{sec:the-constr}, we introduce the concept of an MC-sprout, 
which can be viewed as an approximation to a formality quasi-isomorphism 
\eqref{desired}. Using this concept, we formulate the main theorem of this paper 
(see Theorem \ref{thm:main}) and deduce it from a technical lemma 
(see Lemma \ref{lem:betaalter}). Section \ref{sec:betaalter} is devoted to the proof 
of this lemma and Appendix \ref{app:lift} contains the proof of a useful lifting 
property for cobar resolutions. Finally, Appendix \ref{app:Tam-Arity4} displays
a third MC sprout in $\Conv(\Ger^{\vee}, \Br)$ which can be extended to a genuine 
MC element in $\Conv(\Ger^{\vee}, \Br)$. This MC sprout was found using the 
software \cite{Software} developed by the authors.

%

We should mention that our construction is inspired by Proposition 5.8 from classical 
paper\footnote{See also Theorem 4 and Corollary 4.1 in D. Bar-Natan's 
beautiful paper \cite{BN-GT}.} \cite{Drinfeld} by V. Drinfeld.

\vspace{0.28cm}

\noindent
\textbf{Acknowledgements:} The authors were partially supported by the NSF grant DMS-1501001. 
The authors are thankful to Sergey Plyasunov and Justin Y. Shi  for showing them how
to use the Python module {\it pickle.}  This module was used in the package \cite{Software} related 
to this paper.

\subsection{Preliminaries}
\label{sec:prelim}

In this paper, $\bbK$ is any field extension of the field $\bbQ$ of rational numbers 
and  $\otimes := \otimes_\bbQ$. For a cochain complex $V$, the notation $\cZ(V)$ is 
reserved for the subspace of cocycles. The degree of a vector $v$ in a graded vector 
space (or a cochain complex) $V$ is denoted by $|v|$. The notation $\bs$ (resp. $\bsi$) is 
reserved for the operator which shifts the degree up by $1$ (resp. down by $1$), i.e.
$$
(\bs V)^{\bul} = V^{\bul -1}\,, \qquad (\bsi V)^{\bul} = V^{\bul+1}\,.
$$
The notation $S_n$ is reserved for the symmetric group on $n$ letters. 

The abbreviation ``dg'' always means ``differential graded''. 

For a dg Lie algebra $\cL$, $\Curv$ is the map  $\Curv : \cL^1 \to \cL^2$
defined by the formula
\begin{equation}
\label{Curv-dfn}
\Curv(\al) : = \pa \al + \frac{1}{2} [\al, \al].  
\end{equation}
For example, Maurer-Cartan (MC) elements of $\cL$ are precisely 
elements of the zero locus of $\Curv$.  

Let us recall \cite{GMtheorem}, \cite{Getzler}, \cite{Hinich} that for every filtered dg Lie algebra $\cL$ 
(in the sense of \cite[Section 1]{GMtheorem}), the set of MC elements of  $\cL$ 
can be upgraded to a groupoid\footnote{This groupoid is 
actually a truncation of an $\infty$-groupoid (i.e. a fibrant simplicial set). However, for our purposes, we will not 
need cells of dimension $\ge 2$.} with MC elements being objects. Recall that 
two MC elements $\al, \ti{\al}$ of a filtered dg Lie algebra $\cL$ are isomorphic (in this groupoid) if 
there exists a degree $0$ element $\xi \in \cL$ such that 
\begin{equation}
\label{al-isom-ti-al}
\ti{\al} ~ = ~ \exp([\xi, ~]) \al  ~  - ~ \frac{\exp([\xi, ~]) - 1}{[\xi, ~]} \, \pa \xi,
\end{equation}
where the expressions $\exp([\xi, ~])$ and 
$$ 
\frac{\exp([\xi, ~]) - 1}{[\xi, ~]}
$$
are defined via the corresponding Taylor series\footnote{These series are well defined because 
$\cL = \cF_1 \cL$ and $\cL$ is complete with respect to the filtration.}.

In this paper, we will freely use the language of (colored) operads \cite{notes}, 
\cite{Fresse-book}, \cite{LV-book}. For a coaugmented cooperad $\cC$, the notation 
$\cC_{\circ}$ is reserved for the cokernel of the coaugmentation. 
For a dg pseudo-cooperad $P$, we denote by $P^{\ds}$ the dg cooperad 
which is obtained from $P$ by formally adjoining the counit. Clearly, for 
every coaugmented cooperad $\cC$, the cooperad $\cC_{\circ}^{\ds}$ is canonically 
identified with $\cC$. The notation $\Xi$ is reserved for the ordinal of colors. 
A ($\Xi$-colored)  {\it collection} $V$ is a family of cochain complexes 
$\{ V(\bq) \}_{\bq}$ indexed by all $\Xi$-colored corollas $\bq$ (with the standard labeling). 
For every $\Xi$-colored corolla $\bq$, $V(\bq)$ is equipped with the left action of 
the group 
$$
S_{k_1(\bq)} \times S_{k_2(\bq)} \times \dots \times S_{k_m(\bq)},  
$$ 
where $m$ is the total number of colors of the incoming edges and 
$k_i(\bq)$ is the number of incoming edges of the $i$-th color. 
For example, if the ordinal of colors $\Xi$ is the singleton, then
a collection is simply a family of cochain complexes $\{ V(n) \}_{n \ge 0}$, 
where each $V(n)$ is equipped with a left action of $S_n$.

The notation $\Coll$ is reserved for the category of $\Xi$-colored collections of 
graded vector spaces. For objects $Q_1, Q_2$ of $\Coll$ the notation 
$$
\Hom_{\Coll}(Q_1, Q_2)
$$ 
is reserved for the vector space of homomorphisms (of all degrees) from the collection 
$Q_1$ to the collection $Q_2$. For example, if the ordinal of colors is the singleton, then 
\begin{equation}
\label{Hom-Coll}
\Hom_{\Coll}(Q_1, Q_2) : =  \prod_{n \ge 0} \Hom_{S_n} \big(Q_1(n), Q_2(n)\big),
\end{equation} 
where
$$
\Hom_{S_n} \big(Q_1(n), Q_2(n)\big) = \Big( \Hom \big(Q_1(n), Q_2(n)\big) \Big)^{S_n}
$$
and $\Hom \big(Q_1(n), Q_2(n)\big)$ is the inner hom in the category of graded vector spaces. 

For a dg pseudo-cooperad $P$ and a dg operad $\cO$, the notation $\Conv(P, \cO)$
is reserved for the convolution Lie algebra \cite[Section 2.3]{stable}, \cite[Section 4]{notes}. 
The underlying graded vector space of $\Conv(P, \cO)$ is $\Hom_{\Coll}(P, \cO)$ and the 
Lie bracket is given by the formula
$$
[f,g] : = f \bullet g - (-1)^{|f| |g|} g \bullet f, 
$$
where $f \bullet g$ is the pre-Lie multiplication\footnote{See eq. (2.41) in \cite{stable}.} of $f$ and $g$
defined in terms of comultiplication on $P$ and multiplications on $\cO$. 

Let us recall \cite[Proposition 5.2]{notes} that 
MC elements of $\Conv(P, \cO)$ are in bijection with 
operad morphisms $F : \Cobar(P^{\ds}) \to \cO$. In particular, the operad morphism corresponding 
to a MC element $\al \in \Conv(P, \cO)$ will be denoted by $F_{\al}$. 

In this paper, we assume that 
\begin{cond}
\label{cond:P-filtered}
Every dg pseudo-cooperad $P$ carries an ascending filtration 
\begin{equation}
\label{P-circ-filtr}
\bfzero =  \cF^0 P \subset \cF^1 P \subset  \cF^2 P \subset  \cF^3 P \subset \dots 
\end{equation}
which is compatible with the differential and 
the comultiplications in the following sense: 
\begin{equation}
\label{diff-P-filtr}
\pa_{P}  \big( \cF^m  P \big) \subset  \cF^{m-1}  P, 
\end{equation}
\begin{equation}
\label{D-bt-filtr}
\D_{\bt} \big( \cF^m  P  \big) \subset 
\bigoplus_{m_1 + \dots + m_k  = m} 
\cF^{m_1} P  \otimes  \cF^{m_2} P  \otimes  \dots
\otimes  \cF^{m_k} P \,,
\end{equation}
where $\bt$ is a ($\Xi$-colored) planar tree with the set of leaves 
$\{1,2,\dots, n \}$ and $k$ nodal vertices.
Moreover, $P$ is cocomplete with respect to filtration \eqref{P-circ-filtr}, i.e.
\begin{equation}
\label{cocomplete}
P = \bigcup_{m} \cF^m P. 
\end{equation}
\end{cond}
\begin{remark}
\label{rem:Sul-alg}
Cobar resolutions $\Cobar(P^{\ds})$ for which $P$ satisfies 
Condition \ref{cond:P-filtered} may be thought of as 
analogs of Sullivan algebras from rational homotopy theory. 
Let us also mention that, due to \cite[Proposition 38]{MV11}, 
such dg operads $\Cobar(P^{\ds})$ are cofibrant. 
\end{remark}

For example, if the ordinal of colors $\Xi$ is the singleton, and 
$P(0) = P(1) = \bfzero$, then the filtration ``by arity'' 
\begin{equation}
\label{filtr-arity}
\cF^m P (n) : = \begin{cases}
P(n)  \qquad {\rm if} ~~ n \le m+1  \\
\bfzero  \qquad {\rm otherwise}.
\end{cases}
\end{equation}
satisfies Condition \ref{cond:P-filtered}. 

Condition \ref{cond:P-filtered} guarantees that, for every dg 
operad $\cO$,  the dg Lie algebra 
\begin{equation}
\label{Conv-P-cO}
\Conv(P, \cO)
\end{equation}
is equipped with the complete descending filtration: 
$$
\Conv(P, \cO)  = \cF_1 \Conv(P, \cO) \supset \cF_2 \Conv(P, \cO) \supset \dots
$$
\begin{equation}
\label{filtr-Conv}
\cF_m \Conv(P, \cO) : = \big\{ f\in  \Conv(P, \cO) ~\big|~ f \big|_{\cF^{m-1} P} = 0 \big\}.
\end{equation}
In other words, $\Conv(P, \cO)$ is a filtered dg Lie algebra in the sense of \cite[Section 1]{GMtheorem}.

\section{The recursive construction of formality quasi-isomorphisms}
\label{sec:the-constr}

Let $\cO$ be a dg operad and $\cH$ be the cohomology operad for $\cO$: 
$$
\cH : = H^{\bul}(\cO). 
$$
We assume that $\cH$ admits a cobar resolution $\Cobar(P^{\ds})$ where 
$P^{\ds}$ is a dg pseudo-cooperad satisfying Condition \ref{cond:P-filtered}.

Due to Corollary \ref{cor:zig-zag-shorter} from Appendix \ref{app:lift}, the 
problem of constructing a zig-zag of quasi-isomorphisms (of dg operads) connecting 
$\cO$ to $\cH$ is equivalent to the problem of constructing a single quasi-isomorphism 
(of dg operads)
$$
F : \Cobar(P^{\ds}) \to \cO. 
$$
The latter problem is, in turn, equivalent to the problem of constructing 
a MC element 
$$
\al \in \Conv(P, \cO)
$$
whose corresponding morphism $F_{\al} :  \Cobar(P^{\ds}) \to \cO$ is a 
quasi-isomorphism of dg collections.  

In this paper, we consider a dg operad $\cO$ and a cobar resolution
\begin{equation}
\label{rho-cH}
\rho \,: \, \Cobar(P^{\ds}) ~\stackrel{\sim}{\longrightarrow}~ \cH : = H^{\bul}(\cO). 
\end{equation}
We assume that the pair $(P, \cO)$ satisfies the following conditions: 
\begin{enumerate}
\item[\textbf{C1}] The dg pseudo-operad $P$ is equipped with an {\it additional} grading 
\begin{equation}
P = \bigoplus_{k \geq 1} \cG^k P\,, \qquad \cG^{\,\le 0} P = \bfzero
\end{equation}
which is compatible with the differential $\pa_P$ and the comultiplications $\D_{\bt}$ 
in the following sense: 
\begin{equation}
 \pa_P (\cG^k P) \, \subset \, \cG^{k-1} P,
\end{equation}
\begin{equation}
 \D_\bt(\cG^m P) ~\subset~ \bigoplus_{r_1+..+ r_q = m} \cG^{r_1}P \otimes \cG^{r_2}P \otimes ... \otimes \cG^{r_q}P,
\end{equation}
where $\bt$ is ($\Xi$-colored) tree with $q$ nodal vertices. 

\item[\textbf{C2}] $\cG^k P$ is finite dimensional for every $k$ and the graded 
components of $\cO(\bq)$ are finite dimensional for every $\Xi$-colored corolla $\bq$.  

\item[\textbf{C3}] The operad $\cH$ is generated by $\rho (\bs \cG^1 P)$ and 
\begin{equation}
\label{rho-for-k-ge2}
\rho \big|_{\bs  \cG^k P} ~ = ~0 \quad \forall ~~ k \ge 2. 
\end{equation}

\end{enumerate}
\begin{example} 
\label{ex:aritygrading}
Suppose that the ordinal of colors $\Xi$ is the singleton, 
$P(0) = P(1) = 0$ and the differential $\pa_P = 0$. Then the grading by 
arity 
\begin{equation}
\label{cG-arity}
\cG^k P (n) : = 
\begin{cases}
 P(n) \qquad {\rm if} ~~ n = k +1, \\
 \bfzero \qquad {\rm otherwise}.
\end{cases}
\end{equation}
satisfies Condition \textbf{C1}.
Moreover, if $P(n)$ is finite dimensional for all $n$ and 
each graded component of $\cO(n)$ is finite dimensional, then 
Condition \textbf{C2} is also satisfied.
In particular, for $P = \Ger^\vee_{\circ}$ the Koszul dual of the Gerstenhaber operad, 
and $\cO = \Br$ the braces operad \cite{Br}, \cite{DeligneTW}, all these assumptions are met.
\end{example}
\begin{remark}
\label{rem:CBr-KS}
Conditions \textbf{C1}, \textbf{C2}, and \textbf{C3} are also satisfied for the pairs
$(P_{\BV}, \CBr)$ and $(\calc^{\vee}, \KS)$, where $P_{\BV}$ is the dg pseudo-cooperad 
used for the cobar resolution \cite{BV} of the operad $\BV$ governing $BV$-algebras and 
$\calc^{\vee}$ shows up in the cobar resolution for the operad governing calculus 
algebras \cite{calc}, \cite[Definition 3]{HoCalc}.  
\end{remark}

\begin{remark}
\label{rem:cG-cF}
Clearly, every dg pseudo-operad $P$ with a grading $\cG^{\bul} P$ satisfying the above 
conditions has the ascending filtration 
\begin{equation}
\label{cF-P}
\cF^{m} P : = \bigoplus_{k \le m} \cG^k P
\end{equation}
and this filtration satisfies Condition \ref{cond:P-filtered}. 
\end{remark}
\begin{remark}
\label{rem:cG-k-P}
If we forget about the differential $\pa_P$ on $P$, every $\cG^k P$  
can be viewed as a collection of graded vector spaces. 
So we will tacitly identify 
elements in $\Hom_{\Coll}(\cG^k P , \cO)$
with elements $f \in \Conv(P, \cO)$ which satisfy the condition 
$f \big|_{\cG^m P} \equiv 0$ for all $m \neq k$. 
It is clear that $\Hom_{\Coll}(\cG^k P , \cO)$ is closed with respect to the differential 
$\pa_{\cO}$ on $\cO$ for every $k$. However, for the map $f \mapsto f \circ \pa_{P}$, we have 
$$
f \in  \Hom_{\Coll}(\cG^k P , \cO) ~ \mapsto~ f \circ \pa_P \in \Hom_{\Coll}(\cG^{k+1} P , \cO). 
$$
\end{remark} 
\begin{remark}
\label{rem:syzygy}
In many examples, the gradation on the (dg) pseudo-operad $P$ from Condition \textbf{C1} 
is precisely the syzygy gradation \cite[Appendix A]{BV}, \cite[Sections 3.3, 7.3]{LV-book}. 
\end{remark}

\hspace{0.5cm}

Let $F$ be an arbitrary morphism of dg operads
$$
F : \Cobar(P^{\ds}) \otimes \bbK \to \cO \otimes \bbK
$$
and $\pi_{\cH}$ be the canonical projection 
$$
\pi_{\cH} : \cZ(\cO) \to \cH 
$$ 
from the sub-operad $\cZ(\cO) : = \cO \cap \ker(\pa)$ to $\cH$. 

Since every vector in $\bs \cG^1 P$ is a cocycle in $\Cobar(P^{\ds})$ the restriction 
$F \big|_{\bs \cG^1 P}$ gives us a map of dg collections 
$$
F \big|_{\bs \cG^1 P} : \bs \cG^1 P \to \cZ(\cO). 
$$
We claim that
\begin{prop}
\label{prop:only-cG1-for-q-iso}
If the image of 
$$
\pi_{\cH} \circ F \big|_{\bs \cG^1 P}~ : ~ \bs \cG^1 P ~\to~ \cH
$$
generates the operad $\cH$ 
then $F$ is a quasi-isomorphism of dg operads.
The same statement holds if the base field $\bbQ$ is replaced 
by its extension $\bbK$. 
\end{prop}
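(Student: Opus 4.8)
\noindent\emph{Proof proposal.} The plan is to show that $F$ induces an isomorphism on cohomology; since a quasi-isomorphism of dg operads is, by definition, a morphism of dg operads which is a quasi-isomorphism of the underlying dg collections, this is exactly what has to be checked. The argument does not distinguish between $\bbQ$ and $\bbK$ --- the only inputs are that cohomology commutes with the base change $-\otimes\bbK$ and that generation of an operad by a subset is preserved by it --- so I will run it over the ground field $\bbk$, where $\bbk$ stands for $\bbQ$ or $\bbK$, and the last sentence of the statement will then be automatic.

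First I would pin down $H^{\bul}(\Cobar(P^{\ds}))$. Since $\rho$ in \eqref{rho-cH} is a quasi-isomorphism, $H^{\bul}(\rho):H^{\bul}(\Cobar(P^{\ds}))\stackrel{\sim}{\to}\cH$ is an isomorphism of operads, and on the class of a cocycle $x$ it is given by $H^{\bul}(\rho)([x])=\rho(x)$ because $\cH$ has zero differential. Now each $x\in\bs\cG^1 P$ is a cocycle in $\Cobar(P^{\ds})$: the internal part of the cobar differential kills it since $\pa_P(\cG^1 P)\subset\cG^0 P=\bfzero$ by \textbf{C1}, and the cooperadic part kills it because a nontrivial decomposition of $\cG^1 P$ into a two-vertex tree would need $r_1+r_2=1$ with $r_1,r_2\ge 1$, which is impossible. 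By \textbf{C3} the subspace $\rho(\bs\cG^1 P)$ generates $\cH$; transporting this through the operad isomorphism $H^{\bul}(\rho)$ shows that the classes $[x]$, $x\in\bs\cG^1 P$, generate the operad $H^{\bul}(\Cobar(P^{\ds}))$.

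Next I would chase these generators along $F$. As $F$ is a chain map and $x\in\bs\cG^1 P$ is a cocycle, $F(x)$ is a cocycle in $\cO$ and
$$
H^{\bul}(F)([x]) \;=\; [F(x)] \;=\; \pi_{\cH}\big(F(x)\big) \;=\; \big(\pi_{\cH}\circ F\big|_{\bs\cG^1 P}\big)(x).
$$
By the hypothesis, the image of $\pi_{\cH}\circ F\big|_{\bs\cG^1 P}$ generates $\cH$, so $H^{\bul}(F)$ carries a generating set of $H^{\bul}(\Cobar(P^{\ds}))$ onto a generating set of $\cH$. Since the image of a morphism of operads is a suboperad, this forces $H^{\bul}(F):H^{\bul}(\Cobar(P^{\ds}))\to\cH$ to be surjective.

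Finally, the surjectivity has to be boosted to bijectivity, and I expect this to be the one genuinely non-formal point. By \textbf{C2} every graded component of $\cO(\bq)$ is finite dimensional, hence so is every graded component of $\cH(\bq)=H^{\bul}(\cO)(\bq)$, being a subquotient; and via the isomorphism $H^{\bul}(\rho)$ the graded components of $H^{\bul}(\Cobar(P^{\ds}))$ have exactly the same (finite) dimensions. A surjective linear map between finite dimensional vector spaces of equal dimension is invertible, so applying the previous step in each arity $\bq$ and each cohomological degree shows that $H^{\bul}(F)$ is an isomorphism of dg collections; that is, $F$ is a quasi-isomorphism of dg operads. The essential role of \textbf{C2} is precisely here: $H^{\bul}(F)$ is not visibly injective, and without the finiteness one could not conclude.
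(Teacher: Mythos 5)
Your proof is correct and follows essentially the same route as the paper's: show $H^{\bul}(F)$ is surjective by observing that its image is a suboperad of $\cH$ containing the generating set $\pi_{\cH}(F(\bs\cG^1 P))$, then upgrade surjectivity to bijectivity via the finite-dimensionality from \textbf{C2} together with $H^{\bul}(\Cobar(P^{\ds}))\cong\cH$. Your extra step of transporting the generators of $\cH$ back through $H^{\bul}(\rho)$ is a harmless detour (the paper gets surjectivity directly from the fact that the image is a suboperad containing a generating set), but the substance is identical.
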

\begin{proofOld}
Since all vectors in $\bs \cG^1 P$ are cocycles in $\Cobar(P^{\ds})$ and 
the sub-collection $\pi_{\cH} \circ F (\bs \cG^1 P)$ generates the operad 
$\cH$, the map 
$$
H^{\bul}(F) : H^{\bul}\big(\Cobar(P^{\ds})\big) \to \cH
$$
is surjective.

Since each graded component of $\cO(\bq)$ is finite dimensional for every corolla $\bq$
(see Condition \textbf{C2}), we know that each graded component of $\cH(\bq)$
is finite dimensional for every $\bq$. 

On the other hand, $H^{\bul}\big(\Cobar(P^{\ds})\big)$ is isomorphic to $\cH$. 

Thus the proposition follows from the fact a surjective map between isomorphic 
finite dimensional vector spaces is an isomorphism. 

Since this proof works for any base field (of characteristic zero), the last 
assertion in the proposition is obvious. 
\end{proofOld}

\subsection{MC-sprouts in $\Conv(P, \cO)$} 
\label{sec:MC-sprout}

\begin{defi}
\label{dfn:n-MC-sprout}
Let $\cF_{\bul}\Conv(P, \cO)$ be the descending filtration on $\Conv(P, \cO)$ 
coming from the ascending filtration \eqref{cF-P} on $P$ and $n$ be an integer $\ge 1$. 
An \emph{$n$-th MC-sprout} in $\Conv(P, \cO)$ is a degree $1$ element $\al \in \Conv(P, \cO)$ such that 
$$
\Curv(\al) \in \cF_{n+1} \Conv(P, \cO)
$$  
or equivalently 
\begin{equation}
\label{sprout-cG}
\Curv(\al) (X) = 0, \qquad \forall ~~ X \in \cG^{\,\le n} P.  
\end{equation}
\end{defi}
\begin{remark}
\label{rem:finite-sum}
Since $P$ is graded, every element $\al \in \Conv(P, \cO)$ can be uniquely written as 
$$
\al = \sum_{k=1}^{\infty} \al_k\,, \qquad \al_k \in \Hom_{\Coll}(\cG^k P, \cO).  
$$
Moreover, since $\al_{k}$ for $k > n$ do not contribute to the left hand side of 
\eqref{sprout-cG}, we may only consider $n$-th MC-sprouts of the form
$$
\al =  \sum_{k=1}^{n} \al_k\,, \qquad \al_k \in \Hom_{\Coll}(\cG^k P, \cO). 
$$
Due to our conditions on $\cO$ and $P$, any such MC-sprout is determined by 
a finite number of coefficients. 
\end{remark}
\begin{example}
\label{ex:truncation}
Let $\al$ be a genuine MC element of $\Conv(P, \cO)$. 
\emph{The $n$-th truncation} of $\al$ is the degree $1$ element  $\al^{[n]}$ 
of  $\Conv(P, \cO)$ defined by the formula
\begin{equation}
\label{trunc-dfn}
\al^{[n]} (X) = 
\begin{cases}
\al(X)  \qquad {\rm if} ~~ X \in \cG^{\,\le n} P \,, \\
 0  \qquad {\rm otherwise}\,.
\end{cases}
\end{equation}
Clearly, the $n$-th truncation of any MC element $\al$ of  $\Conv(P, \cO)$ is 
an $n$-th MC-sprout in $\Conv(P, \cO)$. It is also easy to see that the same 
formula \eqref{trunc-dfn} defines an $n$-th MC-sprout in $\Conv(P, \cO)$ provided 
$\al$ is an $m$-th MC-sprout and $m \ge n$.  We also call $\al^{[n]}$ \emph{the 
$n$-th truncation} of $\al$ even if $\al$ is not a genuine MC element of $\Conv(P, \cO)$
but merely an $m$-th MC-sprout for $m \ge n$. 
\end{example}
\begin{example}
\label{ex:Ger-Br-2nd-sprout}
Let $\Br$ be the braces operad and $T_{12}$,  $T_{\cup}$, $T_{1,23}$, 
$T^{\cup}_{123}$ and $T_{1, \bul, 23}$ be the brace trees shown in figures 
\ref{fig:Br-tree} and \ref{fig:Br-tree1}. Let $\al'$ be the following vector in 
$\Br(2) \otimes \La^{-2}\Ger(2)  \oplus \Br(3) \otimes \La^{-2}\Ger(3)$:
\begin{equation}
\label{al-pr}
\al' : =   T_{12} \otimes b_1 b_2  + \frac{1}{2}\, T_{\cup} \otimes \{b_1, b_2 \} + 
\frac{1}{2} \, T_{1,23} \otimes b_1 \{b_2, b_3\} - \frac{1}{3}\, T^{\cup}_{123} \otimes \{ b_1, \{ b_2, b_3\}\}
\end{equation}
$$
- \frac{1}{6} \,  T^{\cup}_{123} \otimes \{ b_2, \{ b_1, b_3\}\} 
- \frac{1}{6}\, T_{1, \bul, 23} \otimes \{ b_2, \{ b_1, b_3\}\} 
-\frac{1}{12} \, T_{1, \bul, 23} \otimes \{ b_1, \{ b_2, b_3\}\}. 
$$
A direct computation shows that $\Av(\al')$ is a second MC-sprout in $\Conv(\Ger^{\vee}, \Br)$. 
Here $\Av$ is the operator 
$$
\bigoplus_{n \ge 2} \Br(n) \otimes \La^{-2}\Ger(n) ~~\to~~ \bigoplus_{n \ge 2} 
\Hom_{S_n} \big(\Ger^{\vee}(n), \Br(n)\big)
$$
defined in eq. (C.3) in \cite[Appendix C]{DeligneTW} and, for $\al'$, we 
use the notation for vectors in  $\La^{-2}\Ger(n)$ from \cite[Section 4.3]{DeligneTW}.
\end{example}
\begin{figure}[htp] 
\begin{minipage}[t]{0.3\linewidth} 
\centering 
\begin{tikzpicture}[scale=0.6]
\tikzstyle{lab}=[circle, draw, minimum size=5, inner sep=1]
\tikzstyle{n}=[circle, draw, fill, minimum size=3]
\tikzstyle{root}=[circle, draw, fill, minimum size=0, inner sep=1]
\node[root] (rr) at (0, 0) {};
\node [lab] (v1) at (0,1) {$1$};
\node [lab] (v2) at (0,2.2) {$2$};
\draw (rr) edge (v1);
\draw (v1) edge (v2);
\end{tikzpicture}
\end{minipage}
~
\begin{minipage}[t]{0.3\linewidth} 
\centering 
\begin{tikzpicture}[scale=0.6]
\tikzstyle{lab}=[circle, draw, minimum size=5, inner sep=1]
\tikzstyle{n}=[circle, draw, fill, minimum size=3]
\tikzstyle{root}=[circle, draw, fill, minimum size=0, inner sep=1]
\node[root] (rr) at (0, 0) {};
\node [n] (n) at (0,1) {};
\node [lab] (v1) at (-0.8,1.8) {$1$};
\node [lab] (v2) at (0.8,1.8) {$2$};
\draw (n) edge (rr) edge (v1) edge (v2);
\end{tikzpicture}
\end{minipage}
~
\begin{minipage}[t]{0.3\linewidth} 
\centering 
\begin{tikzpicture}[scale=0.6]
\tikzstyle{lab}=[circle, draw, minimum size=5, inner sep=1]
\tikzstyle{n}=[circle, draw, fill, minimum size=3]
\tikzstyle{root}=[circle, draw, fill, minimum size=0, inner sep=1]
\node[root] (rr) at (0, 0) {};
\node [lab] (v1) at (0,1) {$1$};
\node [lab] (v2) at (-0.8,1.8) {$2$};
\node [lab] (v3) at (0.8,1.8) {$3$};
\draw (v1) edge (rr) edge (v2) edge (v3);
\end{tikzpicture}
\end{minipage}
\caption{The brace trees $T_{12}, T_{\cup}$, and $T_{1,23}$, respectively} \label{fig:Br-tree}
\vspace{0.5cm}
\begin{minipage}[t]{0.45\linewidth} 
\centering 
\begin{tikzpicture}[scale=0.6]
\tikzstyle{lab}=[circle, draw, minimum size=5, inner sep=1]
\tikzstyle{n}=[circle, draw, fill, minimum size=3]
\tikzstyle{root}=[circle, draw, fill, minimum size=0, inner sep=1]
\node[root] (rr) at (0, 0) {};
\node [n] (n) at (0,0.8) {};
\node [lab] (v1) at (-0.8,1.8) {$1$};
\node [lab] (v2) at (0,1.8) {$2$};
\node [lab] (v3) at (0.8,1.8) {$3$};
\draw (n) edge (rr) edge (v1) edge (v2) edge (v3);
\end{tikzpicture}
\end{minipage}
~
\begin{minipage}[t]{0.45\linewidth} 
\centering 
\begin{tikzpicture}[scale=0.6]
\tikzstyle{lab}=[circle, draw, minimum size=5, inner sep=1]
\tikzstyle{n}=[circle, draw, fill, minimum size=3]
\tikzstyle{root}=[circle, draw, fill, minimum size=0, inner sep=1]
\node[root] (rr) at (0, -0.3) {};
\node [lab] (v1) at (0,0.5) {$1$};
\node [n] (n) at (0,1.5) {};
\node [lab] (v2) at (-0.8,2.5) {$2$};
\node [lab] (v3) at (0.8,2.5) {$3$};
\draw (v1) edge (rr) (n) edge (v1) edge (v2) edge (v3);
\end{tikzpicture}
\end{minipage}
\caption{The brace trees $T^{\cup}_{123}$, and $T_{1, \bul, 23}$, respectively} \label{fig:Br-tree1}
\end{figure} 

Since all vectors in $\bs \cG^1 P$ are cocycles in $\Cobar(P^{\ds})$, 
$$
\al (\cG^1 P) \subset \cZ(\cO)
$$ 
for every MC-sprout $\al \in \Conv(P,\cO)$.
Let us observe that 
\begin{prop}
\label{prop:2nd-exists}
If $H^{\bul}(\cO) \cong \cH$ and $\al_{\cH}$ is the MC element 
in $\Conv(P, \cH)$ corresponding to \eqref{rho-cH},
then there exists a second MC-sprout $\al \in \Conv(P,\cO)$ such that the diagram 
\begin{equation}
\label{diag-al-rho}
\begin{tikzpicture}
\matrix (m) [matrix of math nodes, row sep=2.5em, column sep=2.5em]
{~~~ & \cZ(\cO)  ~ \\
 \cG^1 P &  \cH\\ };
\path[->, font=\scriptsize]
(m-1-2) edge node[right] {$\pi_{\cH}$} (m-2-2) 
(m-2-1) edge  node[auto] {$\al_{\cH}$} (m-2-2)
(m-2-1) edge  node[auto] {$\al $} (m-1-2);
\end{tikzpicture}
\end{equation}
commutes. 
\end{prop}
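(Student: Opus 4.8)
The plan is to build the desired second MC-sprout directly as $\al = \al_1 + \al_2$, with $\al_k \in \Hom_{\Coll}(\cG^k P, \cO)$, by choosing $\al_1$ and then $\al_2$, and to verify at the end that $\Curv(\al)$ vanishes on $\cG^{\le 2}P$.

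\emph{Choice of $\al_1$.} The projection $\pi_{\cH}\colon \cZ(\cO)\to\cH$ is a surjection of $\Xi$-colored collections of graded vector spaces, so, working over $\bbQ$ and hence in characteristic zero, Maschke's theorem gives an $S$-equivariant degree-preserving section $s$ of $\pi_{\cH}$. We set $\al_1 := s\circ\big(\al_{\cH}\big|_{\cG^1 P}\big)$, viewed as the element of $\Conv(P,\cO)$ supported on $\cG^1 P$ with this restriction. By construction $\al_1(\cG^1 P)\subset\cZ(\cO)$, we have $\pi_{\cH}\circ\al_1 = \al_{\cH}\big|_{\cG^1 P}$, and $\pi_{\cH}\circ\al_1$ coincides with the first truncation $\al_{\cH}^{[1]}$ of $\al_{\cH}$.

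\emph{Choice of $\al_2$.} Condition \textbf{C1} shows that, for $X\in\cG^2 P$, the value $\Curv(\al_1)(X)$ depends on $\al_1$ only through $\al_1\big|_{\cG^1 P}$: indeed $\pa_P X\in\cG^1 P$, and $(\al_1\bullet\al_1)(X)$ only involves the part of $\D_{\bt}(X)$ lying in $\cG^1 P\otimes\cG^1 P$. Since $\al_1$ takes values in the sub-operad $\cZ(\cO)$, which is closed under operadic composition and carries the zero differential, $\Curv(\al_1)(X)\in\cZ(\cO)$. Pushing forward along $\pi_{\cH}$ --- which induces a morphism of dg Lie algebras $\Conv(P,\cZ(\cO))\to\Conv(P,\cH)$ --- and using $\pi_{\cH}\circ\al_1 = \al_{\cH}^{[1]}$ together with the naturality of $\Curv$, we obtain, for every $X\in\cG^2 P$,
\[
\pi_{\cH}\big(\Curv(\al_1)(X)\big)\;=\;\Curv\big(\al_{\cH}^{[1]}\big)(X)\;=\;\Curv(\al_{\cH})(X)\;=\;0,
\]
where the middle equality holds because both sides depend only on the restriction of the argument to $\cG^{\le 1}P$, and the last because $\al_{\cH}$, being the MC element attached to the operad morphism $\rho$, is a genuine MC element of $\Conv(P,\cH)$. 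Thus $\Curv(\al_1)(X)\in\ker\pi_{\cH}=\pa_{\cO}(\cO)$, the last equality being precisely the hypothesis $H^{\bul}(\cO)\cong\cH$. Hence $\Curv(\al_1)\big|_{\cG^2 P}$ is a map of collections $\cG^2 P\to\pa_{\cO}(\cO)$; applying Maschke once more to the surjection $\pa_{\cO}\colon\cO\to\pa_{\cO}(\cO)$ to get an $S$-equivariant section $s'$, we set $\al_2 := -\,s'\circ\big(\Curv(\al_1)\big|_{\cG^2 P}\big)$, so that $\pa_{\cO}\circ\al_2 = -\,\Curv(\al_1)\big|_{\cG^2 P}$.

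\emph{Verification.} Using that $\al_1$ and $\al_2$ are supported on $\cG^1 P$ and $\cG^2 P$ respectively, that $f\mapsto f\circ\pa_P$ raises the $\cG$-degree of support by one (Remark \ref{rem:cG-k-P}), and the filtration estimate $[\cF_a\Conv(P,\cO),\cF_b\Conv(P,\cO)]\subset\cF_{a+b}\Conv(P,\cO)$, a short computation gives $\Curv(\al_1+\al_2)\big|_{\cG^1 P}=\pa_{\cO}\circ\al_1=0$ and $\Curv(\al_1+\al_2)\big|_{\cG^2 P}=\Curv(\al_1)\big|_{\cG^2 P}+\pa_{\cO}\circ\al_2=0$. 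Therefore $\al := \al_1+\al_2$ is a second MC-sprout, and $\pi_{\cH}\circ\al\big|_{\cG^1 P}=\pi_{\cH}\circ\al_1=\al_{\cH}\big|_{\cG^1 P}$, so diagram \eqref{diag-al-rho} commutes. The step I expect to be most delicate is the displayed identity: one must make sure $\Curv(\al_1)(X)$ genuinely depends only on $\al_1\big|_{\cG^1 P}$ (so the undetermined behaviour of $\al$ on $\cG^{\ge 2}P$ is irrelevant) and that all the terms entering it are cocycles, so that $\pi_{\cH}$ may be applied and is compatible with operadic composition. Everything else reduces to Maschke's theorem over $\bbQ$ and the compatibilities recorded in Condition \textbf{C1}.
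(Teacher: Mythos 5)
Your argument is correct and follows essentially the same route as the paper's own proof: both choose an $S$-equivariant section $\ms$ (your $s$) of $\pi_{\cH}$ to set $\al_1 := \ms\circ\al_{\cH}$, observe that $\Curv(\al_1)\big|_{\cG^2 P}$ is a cocycle projecting to $\Curv(\al_{\cH})\big|_{\cG^2 P}=0$ in $\cH$ (hence lands in $\pa_{\cO}(\cO)$), and then use a second equivariant splitting $\ti{\ms}$ (your $s'$) of $\pa_{\cO}\colon\cO\to\pa_{\cO}(\cO)$ to correct by $\al_2$. The only cosmetic differences are that you phrase the intermediate computation directly in terms of $\Curv$ whereas the paper writes out $\al^{(1)}\pa_P + \al^{(1)}\bullet\al^{(1)}$ explicitly, and the paper also notes via Condition {\bf C3} that $\al_{\cH}$ is already equal to its first truncation, which makes the step $\Curv(\al_{\cH}^{[1]})=\Curv(\al_{\cH})$ immediate (your justification via Condition {\bf C1} and the vanishing differential on $\cH$ is also valid).
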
 
\begin{proofOld}
Since we work with vector spaces, there exist splittings
\begin{equation}
\label{split}
\eta_{\bq} : \cH(\bq) \to \cZ(\cO(\bq))
\end{equation}
of the projections $\pi_{\cH} :  \cZ(\cO(\bq)) \to \cH(\bq)$ for every $\Xi$-colored corolla $\bq$. 

Moreover, since our base field has characteristic zero, we can use the 
standard averaging operators (for products of symmetric groups) and turn 
the splittings \eqref{split} into a map of collections 
\begin{equation}
\label{ms}
\ms : \cH \to \cZ(\cO)
\end{equation}
for which 
\begin{equation}
\label{pi-ms}
\pi \circ \ms  = \id_{\cH}\,. 
\end{equation} 

The similar argument, implies that there exists a map of collections
\begin{equation}
\label{ti-ms}
\ti{\ms} : \ker \big( \cZ(\cO) \to \cH \big) \to \cO
\end{equation}
which splits $\pa_{\cO}: \cO \to \ker \big( \cZ(\cO) \to \cH \big)$. 

By setting\footnote{Note that, due to \eqref{rho-for-k-ge2}, $\al^{(1)}(X) = 0$ for every $X \in \cG^{\ge 2} P$.} 
\begin{equation}
\label{al-first}
\al^{(1)} : =  \ms \circ \al_{\cH}
\end{equation}
we get a first MC-sprout in $\Conv(P, \cO)$ for which 
\begin{equation}
\label{pi-al-first}
\pi_{\cH} \circ \al^{(1)} = \al_{\cH}\,. 
\end{equation}

Let us observe that, since  $\al^{(1)}$ lands in $\cZ(\cO)$, 
the assignment 
$$
X \in \cG^2 P ~\mapsto~ \al^{(1)} \pa_{P} (X) + \al^{(1)} \bullet \al^{(1)} (X)
$$
gives us a map of collections:
\begin{equation}
\label{cG2-P-cO}
\cG^2 P \to \cZ(\cO).
\end{equation}

Since $\pi_{\cH}$ is compatible with the operadic multiplications, 
the composition of \eqref{cG2-P-cO} with $\pi_{\cH}$ sends 
$X \in \cG^2 P$ to  
\begin{equation}
\label{X-to-zero}
\al_{\cH}(\pa_{P} X) + \al_{\cH} \bullet \al_{\cH} (X) ~\in ~ \cH
\end{equation}

On the other hand, the vector \eqref{X-to-zero} is zero since $\al_{\cH}$ satisfies the 
MC equation and $\cH$ has the zero differential. 

Since the composition of \eqref{cG2-P-cO} with $\pi_{\cH}$ is zero, the map 
\eqref{cG2-P-cO} lands in $\ker \big( \cZ(\cO) \to \cH \big)$ and hence \eqref{cG2-P-cO} can 
be composed with the splitting \eqref{ti-ms}. 

Setting
\begin{equation}
\label{al-second}
\al (X) =
\begin{cases}
\quad \al^{(1)}(X)  \qquad {\rm if} ~~ X \in \cG^1 P \\[0.3cm]
- \ti{\ms} \big( \al^{(1)} \pa_{P} (X) + \al^{(1)} \bullet \al^{(1)} (X) \big)  \qquad {\rm if} ~~ X \in \cG^2 P \\[0.3cm]
 \quad 0 \qquad {\rm otherwise}
\end{cases}  
\end{equation}
we get a degree $1$ element $\al$ which satisfies
$$
\pa_{\cO} \al(X) + \al (\pa_P X) + \al \bullet \al (X) = 0  \qquad \forall ~~ X \in \cG^1 P \oplus \cG^2 P. 
$$
In other words, $\al$ is a second MC-sprout in $\Conv(P, \cO)$. 

Equations \eqref{pi-al-first} and \eqref{al-second} imply that the diagram \eqref{diag-al-rho} commutes.
\end{proofOld}
\begin{remark} 
\label{rem:then-q-iso}
Let $n$ be an integer $\ge 2$ and 
$$
\al = \al_1 + \al_2 + \dots + \al_n, \qquad \al_k \in \Hom_{\Coll}(\cG^k P, \cO)  
$$
be an $n$-th MC-sprout in $\Conv(P, \cO)$.
Proposition \ref{prop:only-cG1-for-q-iso} implies that, if $\al$ is a truncation of 
a genuine MC element $\beta \in \Conv(P, \cO)$ and  the diagram 
\eqref{diag-al-rho} commutes then the corresponding map of dg operads 
$$
F_{\beta} : \Cobar(P^{\ds}) \to \cO
$$ 
is a quasi-isomorphism. Thus, for our purposes, it makes sense to 
consider only MC-sprouts in $\Conv(P, \cO)$ for 
which the diagram \eqref{diag-al-rho} commutes. 
\end{remark}
\begin{remark}
\label{rem:non-formal}
Due to Proposition \ref{prop:2nd-exists}, a second MC-sprout $\al$ exists even if $\cO$ is non-formal. 
Of course, if $\cO$ is non-formal, such $\al$ is not a truncation of 
any MC element in $\Conv(P, \cO)$. 
\end{remark}

\subsection{The main theorem}
\label{sec:main-thm}
Let, as above, $\cO$ be a dg operad defined over $\bbQ$, $\cH : = H^{\bul}(\cO)$, and 
$$
\rho \,: \, \Cobar(P^{\ds}) ~\stackrel{\sim}{\longrightarrow}~ \cH
$$
be a cobar resolution for $\cH$, where $P$ is a dg pseudo-cooperad. 

The main result of this paper is the following theorem. 
\begin{thm} 
\label{thm:main}
Let us assume that the pair $(\cO, P)$ satisfies 
Conditions {\bf C1},  {\bf C2},  {\bf C3},
and $\cO \otimes \bbK$ is formal for some field extension $\bbK$ of $\bbQ$. 
Let, furthermore, $n$ be an integer $\ge 2$ and 
\begin{equation}
\label{al-n-th}
\al = \alpha_1 + \dots + \alpha_n \in \Conv(P, \cO), \qquad \al_k \in \Hom_{\Coll}(\cG^k P, \cO)
\end{equation}
be an $n$-th MC-sprout in $\Conv(P, \cO)$ for which 
the diagram \eqref{diag-al-rho} commutes. 
Then there exists an $(n+1)$-th MC-sprout $\ti{\al}$ such that 
$$
\ti{\al}_k  = \al_k \qquad \forall~~ k  < n. 
$$ 
Moreover, the unknown vectors $\ti{\al}_{n}$ and  $\ti{\al}_{n+1}$ can be found 
by solving a finite dimensional linear system.
\end{thm}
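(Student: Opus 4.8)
The plan is to deduce Theorem~\ref{thm:main} from Lemma~\ref{lem:betaalter} by a descent (rationality) argument. I would first rephrase ``there exists such an $\ti\al$'' as the statement that a certain finite-dimensional inhomogeneous \emph{linear} system with coefficients in $\bbQ$ is consistent; then recall that such a system has a rational solution as soon as it has a solution over $\bbK$; and finally produce a solution over $\bbK$ by applying Lemma~\ref{lem:betaalter} (which yields the conclusion of the theorem whenever the target operad is itself formal, over any base field) to $\cO\otimes\bbK$.

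To set up the system I would use Remark~\ref{rem:finite-sum} to look for $\ti\al$ of the form $\ti\al=\al_1+\dots+\al_{n-1}+\ti\al_n+\ti\al_{n+1}$, so the only unknowns are $\ti\al_n\in\Hom_{\Coll}(\cG^n P,\cO)$ and $\ti\al_{n+1}\in\Hom_{\Coll}(\cG^{n+1}P,\cO)$ in degree $1$; by Conditions \textbf{C1} and \textbf{C2} these are finite-dimensional $\bbQ$-vector spaces. Such an $\ti\al$ agrees with the given $n$-th MC-sprout $\al$ on $\cG^{\le n-1}P$, so the sprout equations $\Curv(\ti\al)(X)=0$ in those arities hold automatically, and being an $(n+1)$-st MC-sprout amounts to the two equations $\Curv(\ti\al)\big|_{\cG^{n}P}=0$ and $\Curv(\ti\al)\big|_{\cG^{n+1}P}=0$. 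Unwinding these via the compatibility of $\pa_P$ and $\D_\bt$ with the grading $\cG^\bullet P$ (Condition \textbf{C1}), the first reads $\pa_{\cO}\circ\ti\al_n=\pa_{\cO}\circ\al_n$ (equivalently $\ti\al_n=\al_n+\beta_n$ with $\beta_n$ landing in $\cZ(\cO)$ --- this alteration $\beta_n$ explains the name of the lemma), and the second reads $\pa_{\cO}\circ\ti\al_{n+1}=-\,\ti\al_n\circ\pa_P-\al_1\bullet\ti\al_n-\ti\al_n\bullet\al_1+(\text{a fixed element depending only on }\al_1,\dots,\al_{n-1})$. The key point is that both equations are \emph{affine-linear} in $(\ti\al_n,\ti\al_{n+1})$: a term quadratic in $\ti\al_n$ cannot occur, because for $n\ge 2$ a tree with two nodal vertices both decorated by elements of $\cG^n P$ would be applied to $\cG^{2n}P$ and $2n>n+1$. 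As $\pa_P$, $\pa_{\cO}$, the comultiplications $\D_\bt$ and the operadic compositions of $\cO$ are all defined over $\bbQ$, choosing $\bbQ$-bases turns the two equations into a single inhomogeneous linear system $Ax=b$ with $A$ and $b$ rational, whose solutions are precisely the pairs $(\ti\al_n,\ti\al_{n+1})$ completing $\al$ to an $(n+1)$-st MC-sprout with $\ti\al_k=\al_k$ for $k<n$. (Since $n\ge 2$ forces $\ti\al_1=\al_1$, every such $\ti\al$ moreover makes the diagram \eqref{diag-al-rho} commute.)

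It then remains to show $Ax=b$ is consistent. A linear system with rational coefficients is solvable over $\bbQ$ if and only if it is solvable over any field extension $\bbK$, since $\mathrm{rank}_{\bbQ}A=\mathrm{rank}_{\bbK}A$ and $\mathrm{rank}_{\bbQ}[A\mid b]=\mathrm{rank}_{\bbK}[A\mid b]$; hence it suffices to exhibit one solution over $\bbK$. For this I would pass to $\cO\otimes\bbK$, $P\otimes\bbK$ and $\rho\otimes\id_{\bbK}$: Conditions \textbf{C1}, \textbf{C2}, \textbf{C3} are inherited, $\al\otimes\id_{\bbK}$ is an $n$-th MC-sprout in $\Conv(P\otimes\bbK,\cO\otimes\bbK)$ for which \eqref{diag-al-rho} still commutes, and $\cO\otimes\bbK$ is formal by hypothesis; so Lemma~\ref{lem:betaalter}, applied over $\bbK$, produces an $(n+1)$-st MC-sprout over $\bbK$ extending $\al\otimes\id_{\bbK}$ with unchanged lower components, i.e.\ a $\bbK$-solution of $Ax=b$. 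Therefore $Ax=b$ is already consistent over $\bbQ$, so the required $\ti\al$ exists; and, as asserted, explicit rational vectors $\ti\al_n,\ti\al_{n+1}$ are then obtained by Gaussian elimination on $Ax=b$.

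The step I expect to be the genuine obstacle lies entirely inside Lemma~\ref{lem:betaalter} (proved in Section~\ref{sec:betaalter}): this is where formality of the target operad must be converted into solvability of the one-step extension problem --- morally, by producing over a formal operad a genuine MC element whose truncation below arity $n$ coincides with that of the given sprout, so that its $(n+1)$-st truncation solves the system. The reduction sketched above uses only Remark~\ref{rem:finite-sum}, Conditions \textbf{C1}--\textbf{C3}, and elementary linear algebra over $\bbQ\subset\bbK$.
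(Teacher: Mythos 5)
Your proposal matches the paper's own deduction of Theorem~\ref{thm:main} from Lemma~\ref{lem:betaalter} (Section~\ref{sec:thm-proof}): set $\ti\al_k=\al_k$ for $k\le n-1$, observe that the remaining sprout equations \eqref{MC0}--\eqref{MC1A} form a finite-dimensional linear system over $\bbQ$ in $(\ti\al_n,\ti\al_{n+1})$, obtain a $\bbK$-solution by truncating the genuine MC element supplied by Lemma~\ref{lem:betaalter}, and conclude rational solvability by equality of ranks over $\bbQ$ and $\bbK$. Your added remarks that no term quadratic in $\ti\al_n$ can arise for degree reasons and that $n\ge 2$ keeps $\ti\al_1=\al_1$ are correct elaborations of points the paper leaves implicit, and the only cosmetic deviation is that you phrase Lemma~\ref{lem:betaalter} as being ``applied over $\bbK$'' whereas the paper states it directly with input $\al$ over $\bbQ$ and output $\beta$ over $\bbK$.
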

Theorem \ref{thm:main} has the following immediate corollaries: 
\begin{cor}
\label{cor:main}
Under the above conditions on the pair $(\cO, P)$, a quasi-isomorphism of operads 
\begin{equation}
\label{q-iso}
\Cobar(P^{\ds}) \, \stackrel{\sim}{\longrightarrow} \, \cO
\end{equation} 
can be constructed recursively. Moreover the algorithm for 
constructing \eqref{q-iso} requires no explicit knowledge 
about a sequence of quasi-isomorphisms (of operads) connecting 
$\cO \otimes \bbK$ to $\cH \otimes \bbK$.  \qed
\end{cor}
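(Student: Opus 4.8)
The plan is to manufacture the quasi-isomorphism \eqref{q-iso} as $F_{\al^{\infty}}$ for a \emph{genuine} Maurer--Cartan element $\al^{\infty}\in\Conv(P,\cO)$, built by running Theorem \ref{thm:main} ad infinitum, starting from the second MC-sprout of Proposition \ref{prop:2nd-exists}. First I would set $\al^{(2)}$ to be that second MC-sprout: its hypothesis $H^{\bul}(\cO)\cong\cH$ is tautological, its construction uses nothing about $\bbK$, and the diagram \eqref{diag-al-rho} commutes for it. Then, applying Theorem \ref{thm:main} with $n=2,3,4,\dots$ successively, I obtain a sequence $\al^{(2)},\al^{(3)},\al^{(4)},\dots$ in which each $\al^{(m)}$ is an $m$-th MC-sprout, $\al^{(m+1)}_k=\al^{(m)}_k$ for all $k<m$, and \eqref{diag-al-rho} still commutes for every $\al^{(m)}$ — the latter because commutativity of \eqref{diag-al-rho} constrains only the $\cG^1$-component, which is never altered once $m\ge2$. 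Every passage $\al^{(m)}\mapsto\al^{(m+1)}$ is nothing but the solution of the finite-dimensional linear system supplied by Theorem \ref{thm:main}, whose solvability is the single place where formality of $\cO\otimes\bbK$ is invoked (and it is invoked only in the abstract).

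The crucial observation is that, although the recursion is \emph{not} of the naive ``append one new term'' type — the component $\al^{(m)}_m$ genuinely gets modified at the next step — each graded component nevertheless stabilizes: since $\al^{(m+1)}_k=\al^{(m)}_k$ whenever $m\ge k+1$, the sequence $(\al^{(m)}_k)_{m\ge k+1}$ is constant, with value I will call $\al^{\infty}_k$. As $P=\bigoplus_{k\ge1}\cG^k P$, the family $(\al^{\infty}_k)_{k\ge1}$ assembles into a well-defined degree-$1$ element $\al^{\infty}=\sum_{k\ge1}\al^{\infty}_k\in\Conv(P,\cO)$. For each $N$, the restriction $\Curv(\al^{\infty})\big|_{\cG^{\le N}P}$ depends only on the components $\al^{\infty}_k$ with $k\le N$ (by Condition \textbf{C1}), and $\al^{\infty}_k=\al^{(N+1)}_k$ for all such $k$; hence $\Curv(\al^{\infty})\big|_{\cG^{\le N}P}=\Curv(\al^{(N+1)})\big|_{\cG^{\le N}P}=0$ because $\al^{(N+1)}$ is an $(N+1)$-th MC-sprout. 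Since $N$ is arbitrary and $P=\bigcup_N\cG^{\le N}P$, this forces $\Curv(\al^{\infty})=0$, so $\al^{\infty}$ is a genuine MC element; and $\al^{\infty}_1=\al^{(2)}_1$ forces \eqref{diag-al-rho} to commute for $\al^{\infty}$.

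It then remains to check that $F_{\al^{\infty}}:\Cobar(P^{\ds})\to\cO$ is a quasi-isomorphism. Under the correspondence between operad morphisms out of $\Cobar(P^{\ds})$ and MC elements of $\Conv(P,\cO)$, the restriction of $F_{\al^{\infty}}$ to the generators $\bs\cG^1 P$ is, up to the canonical desuspension, the map $\al^{\infty}_1$; so the image of $\pi_{\cH}\circ F_{\al^{\infty}}\big|_{\bs\cG^1 P}$ equals $\pi_{\cH}\big(\al^{\infty}_1(\cG^1 P)\big)=\al_{\cH}(\cG^1 P)=\rho(\bs\cG^1 P)$ by commutativity of \eqref{diag-al-rho}, and this sub-collection generates $\cH$ by Condition \textbf{C3}. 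Proposition \ref{prop:only-cG1-for-q-iso} then yields that $F_{\al^{\infty}}$ is a quasi-isomorphism — and, paired with $\rho$, the zig-zag $\cO\,\stackrel{\sim}{\longleftarrow}\,\Cobar(P^{\ds})\,\stackrel{\sim}{\longrightarrow}\,\cH$ exhibiting the formality of $\cO$. Finally, since Proposition \ref{prop:2nd-exists} and each application of Theorem \ref{thm:main} are finite-dimensional linear-algebra computations over $\bbQ$ (the field $\bbK$ serving only to certify solvability), no explicit zig-zag of quasi-isomorphisms over $\bbK$ is ever used. I expect the main obstacle here to be not analytic but organizational: making the bookkeeping of the middle paragraph precise — that the non-monotone recursion stabilizes grading-by-grading and that the resulting limit genuinely satisfies the full Maurer--Cartan equation — since beyond Theorem \ref{thm:main} itself there is no further difficulty.
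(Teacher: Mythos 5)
Your proposal is correct and is, in substance, the argument the paper leaves implicit (the corollary is marked $\qed$ as an immediate consequence of Theorem \ref{thm:main}): seed the recursion with the second MC-sprout of Proposition \ref{prop:2nd-exists}, iterate Theorem \ref{thm:main} observing that each graded component stabilizes once $m > k$, assemble the stabilized components into a genuine MC element since $\Curv(\cdot)\big|_{\cG^{\le N}P}$ depends only on components of index $\le N$, and invoke Proposition \ref{prop:only-cG1-for-q-iso} together with Condition {\bf C3} (cf.\ Remark \ref{rem:then-q-iso}) to see the resulting operad morphism is a quasi-isomorphism. Your closing remarks about the role of $\bbK$ (certifying solvability only, never entering the computation) correctly account for the second assertion of the corollary.
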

\begin{cor}
\label{cor:every}
If the assumptions of Theorem \ref{thm:main} hold and 
$$
\al = \alpha_1 + \dots + \alpha_n \in \Conv(P, \cO), \qquad \al_k \in \Hom_{\Coll}(\cG^k P, \cO)
$$
is an $n$-th MC-sprout in $\Conv(P, \cO)$ for which 
the diagram \eqref{diag-al-rho} commutes, then there exists 
a genuine MC element $\al_{MC} \in  \Conv(P, \cO)$ whose 
$(n-1)$-th truncation $\al_{MC}^{[n-1]}$ coincides with
$$
\al_1 + \dots + \al_{n-1}\,.
$$
\end{cor}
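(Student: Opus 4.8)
The plan is to iterate Theorem \ref{thm:main} to produce a coherent sequence of MC-sprouts of unbounded order and then take the limit. Starting from the given $n$-th MC-sprout $\al = \al_1 + \dots + \al_n$ (whose associated diagram \eqref{diag-al-rho} commutes), Theorem \ref{thm:main} yields an $(n+1)$-th MC-sprout $\ti{\al}$ with $\ti{\al}_k = \al_k$ for all $k < n$. The key point to check first is that the diagram \eqref{diag-al-rho} still commutes for $\ti{\al}$: this diagram only involves the restriction of the sprout to $\cG^1 P$, and since $n \ge 2$ we have $\ti{\al}_1 = \al_1$, so commutativity is preserved. Hence Theorem \ref{thm:main} applies again, and we obtain inductively a sequence
$$
\al^{(n)},\ \al^{(n+1)},\ \al^{(n+2)},\ \dots
$$
where $\al^{(m)}$ is an $m$-th MC-sprout, $\al^{(n)} = \al$, and $\al^{(m+1)}_k = \al^{(m)}_k$ for all $k < m$. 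In particular, for every fixed $k$ the homogeneous component $\al^{(m)}_k \in \Hom_{\Coll}(\cG^k P, \cO)$ stabilizes once $m > k$; denote the stable value by $(\al_{MC})_k$.

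Next I would assemble these stable components into a single element. Since $P$ is graded, define $\al_{MC} := \sum_{k \ge 1} (\al_{MC})_k$, which is a well-defined degree $1$ element of $\Conv(P, \cO) = \Hom_{\Coll}(P, \cO)$ because each $(\al_{MC})_k$ lives in a distinct graded summand. I claim $\al_{MC}$ is a genuine MC element, i.e. $\Curv(\al_{MC})(X) = 0$ for every $X \in P$. By Condition \textbf{C1} and \eqref{cocomplete} (via Remark \ref{rem:cG-cF}) it suffices to check this for $X \in \cG^{\le m} P$ for each $m$. Fix such an $X$ with $m \ge n$. Because $\pa_P$ raises the $\cG$-degree by one and $\D_\bt$ is compatible with the grading, the value $\Curv(\al_{MC})(X)$ depends only on the components $(\al_{MC})_k$ for $k \le m+1$, and on each of those $\al_{MC}$ agrees with $\al^{(m+1)}$. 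Since $\al^{(m+1)}$ is an $(m+1)$-th MC-sprout, $\Curv(\al^{(m+1)})(X) = 0$ by \eqref{sprout-cG}, hence $\Curv(\al_{MC})(X) = 0$. Therefore $\al_{MC}$ is a genuine MC element of $\Conv(P, \cO)$.

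Finally, the $(n-1)$-th truncation: for $k \le n-1$ we have $k < n$, so $(\al_{MC})_k = \al^{(n)}_k = \al_k$ by construction, and $\al_{MC}^{[n-1]}$ sends $\cG^{\le n-1} P$ to $\al_{MC}$'s values there and kills everything else, giving exactly $\al_1 + \dots + \al_{n-1}$, as required. The one subtlety worth flagging — and the only place the argument could stumble — is the stabilization/locality claim used in the MC check: one must verify that $\Curv$ evaluated on a $\cG$-degree-$\le m$ element genuinely only sees finitely many (namely $\le m+1$) homogeneous components of the input sprout. This follows from the two compatibility displays in Condition \textbf{C1} (the differential drops $\cG$-degree by one, the cooperadic structure maps are graded), exactly as in Remark \ref{rem:cG-k-P}; so this is really a bookkeeping point rather than a genuine obstacle, and the substance of the corollary is entirely carried by the repeated application of Theorem \ref{thm:main}.
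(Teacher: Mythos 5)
Your strategy --- iterate Theorem \ref{thm:main} to build a compatible tower of sprouts $\al^{(m)}$ ($m \ge n$), observe that each homogeneous component stabilizes once $m>k$, and assemble the stable values into a degree-$1$ element of $\Conv(P,\cO)$ --- is exactly the intended argument; the paper labels this corollary ``immediate'' precisely because this iteration is routine. Your observation that the diagram \eqref{diag-al-rho} continues to commute at every stage (since $n\ge 2$ forces $\ti{\al}_1 = \al_1$) is the one hypothesis that genuinely needs re-checking, and you handle it correctly.

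There is, however, a small but real slip in the locality step. You assert that $\pa_P$ \emph{raises} the $\cG$-degree; Condition \textbf{C1} says the opposite, $\pa_P(\cG^k P)\subset \cG^{k-1}P$. Consequently, for $X\in\cG^m P$ the curvature $\Curv(\al_{MC})(X)$ depends only on the components $(\al_{MC})_k$ with $k\le m$ --- the term $\pa_\cO\circ\al_{MC}$ sees $(\al_{MC})_m$, the term $\al_{MC}\circ\pa_P$ sees $(\al_{MC})_{m-1}$, and the pre-Lie term sees $(\al_{MC})_{r_1},(\al_{MC})_{r_2}$ with $r_1+r_2=m$ --- not $k\le m+1$ as you write. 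This is not merely cosmetic: with the slack bound you would need $(\al_{MC})_{m+1}=\al^{(m+1)}_{m+1}$, which Theorem \ref{thm:main} does \emph{not} supply, since passing from $\al^{(m+1)}$ to $\al^{(m+2)}$ only fixes the components of index strictly less than $m+1$. With the corrected bound $k\le m$, the agreement $(\al_{MC})_k=\al^{(m+1)}_k$ you invoke does hold for every relevant $k$, and the argument closes as you intended. (Equivalently, one can keep your slack bound and compare against $\al^{(m+2)}$ instead of $\al^{(m+1)}$.)
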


The proof of Theorem \ref{thm:main} is based on the following technical statement: 
%
\begin{lem} 
\label{lem:betaalter} 
Let $n$ be an integer $\ge 2$ and 
$$
\al = \al_1 + \al_2 + \dots + \al_n\,, \qquad \al_k \in \Hom_{\Coll}(\cG^kP, \cO)  
$$
be an $n$-th MC-sprout in $\Conv(P, \cO)$ for which the diagram 
\eqref{diag-al-rho} commutes. Then there exists a genuine MC element
$\beta \in \Conv(P, \cO \otimes \bbK)$ such that 
$$
\al_1 + \al_2 + \dots + \al_{n-1} = \beta^{[n-1]}\,, 
$$
where $\beta^{[n-1]}$ is the $(n-1)$-th truncation of $\beta$.
\end{lem}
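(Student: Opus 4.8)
The plan is to base‑change to $\bbK$ and then to modify a ``formality'' MC element over $\bbK$ so that it agrees with $\al$ through degree $n-1$. First I would observe that the additional grading $\cG^{\bul}P$, the filtration \eqref{cF-P}, and Conditions {\bf C1}--{\bf C3} are all preserved by $-\otimes\bbK$, that $\Conv(P,\cO\otimes\bbK)=\Conv(P\otimes\bbK,\cO\otimes\bbK)$ inherits the filtration \eqref{filtr-Conv}, and that $\rho\otimes\bbK$ is a cobar resolution of $H^{\bul}(\cO\otimes\bbK)=\cH\otimes\bbK$. Thus $\al^{[n-1]}\otimes\bbK$ is an $(n-1)$-th MC-sprout in $\Conv(P,\cO\otimes\bbK)$ for which the base change of diagram \eqref{diag-al-rho} commutes, and it suffices to produce a genuine MC element $\beta\in\Conv(P,\cO\otimes\bbK)$ with $\beta^{[n-1]}=\al^{[n-1]}\otimes\bbK$. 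I would also record here the one place the \emph{$n$-th} (rather than $(n-1)$-st) sprout hypothesis is used: since $\Curv(f)\big|_{\cG^m P}$ depends only on $f\big|_{\cG^{\le m}P}$, for every $j\le n-1$ the truncation $\al^{[j]}$ is already extended, over $\bbQ$, by the sprout $\al^{[j+1]}$, and for $j\le n-2$ also by $\al^{[j+2]}$. Finally, since $\cO\otimes\bbK$ is formal and $P\otimes\bbK$ satisfies Condition \ref{cond:P-filtered}, Corollary \ref{cor:zig-zag-shorter} of Appendix \ref{app:lift} yields a quasi-isomorphism $\Cobar(P^{\ds})\otimes\bbK\to\cO\otimes\bbK$, and the lifting property of that appendix lets us arrange that the corresponding genuine MC element $\gamma\in\Conv(P,\cO\otimes\bbK)$ is compatible with $\rho\otimes\bbK$, i.e. the base change of diagram \eqref{diag-al-rho} commutes with $\al$ replaced by $\gamma$.

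The main step would then be to ``graft'' the low-degree components of $\al$ onto $\gamma$. I would construct genuine MC elements $\gamma=\gamma^{(0)},\gamma^{(1)},\dots,\gamma^{(n-1)}=:\beta$ in $\Conv(P,\cO\otimes\bbK)$, each compatible with $\rho\otimes\bbK$, with $(\gamma^{(j)})_k=\al_k\otimes\bbK$ for all $k\le j$. In the inductive step $\gamma^{(j)}\rightsquigarrow\gamma^{(j+1)}$ (for $0\le j\le n-2$) one replaces the $\cG^{j+1}P$-component of $\gamma^{(j)}$ by $\al_{j+1}\otimes\bbK$, obtaining an element $\ti\gamma$; since $\ti\gamma\big|_{\cG^{\le j+1}P}$ agrees with $\al^{[j+1]}$, the remark above shows $\ti\gamma$ is again an $(j+1)$-th MC-sprout. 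Comparing the degree-$(j+1)$ parts of the MC equations for $\ti\gamma$ and for $\gamma^{(j)}$ (which agree on $\cG^{\le j}P$) shows that $\de:=\al_{j+1}\otimes\bbK-(\gamma^{(j)})_{j+1}$ takes values in $\cZ(\cO\otimes\bbK)$, hence determines a class $\pi_{\cH}\circ\de\in\Hom_{\Coll}(\cG^{j+1}P,\cH\otimes\bbK)$. If this class vanishes, then $\de=\pa_{\cO}\circ g$ for some degree $0$ collection map $g$ (turn a splitting into a collection map by the averaging argument in the proof of Proposition \ref{prop:2nd-exists}), and gauging $\gamma^{(j)}$ by $-g\in\cF_{j+1}\Conv(P,\cO\otimes\bbK)$ gives a genuine MC element whose $\cG^{\le j}P$-components are unchanged and whose $\cG^{j+1}P$-component is $\al_{j+1}\otimes\bbK$; this is $\gamma^{(j+1)}$. (For $j=0$ the class vanishes automatically, since both $\gamma$ and $\al$ are compatible with $\rho\otimes\bbK$.) When $j\ge1$ and the class is nonzero one must first correct it: using that $\al^{[j+1]}$ and $(\gamma^{(j)})^{[j+1]}$ both extend to $(j+2)$-th MC-sprouts — the former by the remark above, the latter because $\gamma^{(j)}$ is genuine — and subtracting the degree-$(j+2)$ MC equations, one finds that $\pi_{\cH}\circ\de$ is a cocycle for the differential $\pa+[\al_{\cH}\otimes\bbK,-]$ of the convolution Lie algebra $\Conv(P,\cH\otimes\bbK)$ twisted by the MC element $\al_{\cH}\otimes\bbK$. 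Such a cocycle should be removable, up to a coboundary, by acting on $\gamma^{(j)}$ with an automorphism of $\Cobar(P^{\ds})\otimes\bbK$ that fixes the $\cG^{\le j}P$-components and induces the identity on cohomology; after this correction the vanishing case applies.

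The hard part, I expect, will be exactly this correction: showing that an arbitrary such cocycle is realized, modulo a coboundary, by an automorphism of $\Cobar(P^{\ds})\otimes\bbK$ of the required kind. It is here that the \emph{formality} of $\cO\otimes\bbK$ — not merely the existence of the resolution $\rho$ — is genuinely used, through the fact that $F_{\gamma}$ is a quasi-isomorphism together with a step-by-step obstruction argument for lifting cohomology-level data to the cofibrant resolution $\Cobar(P^{\ds})\otimes\bbK$; the obstruction groups that appear are finite-dimensional by Condition {\bf C2}, and (as recorded above) it is precisely the $n$-th-sprout hypothesis that makes $\pi_{\cH}\circ\de$ a cocycle rather than an arbitrary element of $\Hom_{\Coll}(\cG^{j+1}P,\cH\otimes\bbK)$. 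Granting this, $\beta=\gamma^{(n-1)}$ is a genuine MC element of $\Conv(P,\cO\otimes\bbK)$ with $\beta^{[n-1]}=\al_1+\al_2+\dots+\al_{n-1}$, which is the assertion of the lemma.
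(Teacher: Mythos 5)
Your plan is essentially the paper's own: first produce a genuine MC element over $\bbK$ that agrees with $\al$ on $\cG^1 P$ (this is Proposition~\ref{prop:beta-needed}, proved by composing a formality quasi-isomorphism with a lifted automorphism of $\Cobar(P^{\ds})\otimes\bbK$ and then gauging), and then run an induction on the degree, at each step showing the difference $\de$ is a cocycle landing in $\cZ(\cO\otimes\bbK)$, that $\pi_{\cH}\circ\de$ is a cocycle for $\pa_P+[\al_{\cH},\,-\,]$ in $\Conv(P,\cH\otimes\bbK)$, removing its class by precomposing $F_{\gamma^{(j)}}$ with an automorphism $\exp(-\cD)$ of $\Cobar(P^{\ds})\otimes\bbK$, and finally gauging away the remaining $\pa_{\cO}$-exact part (this is Proposition~\ref{prop:the-step}).

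The step you flag as the ``hard part'' — realizing an arbitrary twisted cocycle $\psi\in\Hom_{\Coll}(\cG^mP,\cH\otimes\bbK)$ by a $\pa$-closed derivation of $\Cobar(P^{\ds})\otimes\bbK$ that vanishes on $\bs\cG^{<m}P$ — is exactly the paper's Proposition~\ref{prop:psi-lift}. Two remarks on your description of it. First, it does \emph{not} use formality of $\cO\otimes\bbK$, nor the fact that $F_{\gamma}$ is a quasi-isomorphism; formality enters only once, in Proposition~\ref{prop:beta-needed}, to produce the initial $\gamma$. The lift $\cD$ is built purely inside $\Cobar(P^{\ds})$ using the weight grading \eqref{weights-Cobar} and the contracting homotopies $h_q$ of Claim~\ref{cl:h-q-exist}, which exist because $\rho$ is a quasi-isomorphism; Condition \textbf{C2} plays no role here (it is used only to make each step of the main algorithm a finite linear system). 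Second, the resulting $\cD$ lies in $\Der^{(1-m)}$ with $1-m<0$, so it is locally nilpotent and $\exp(-\cD)$ is a genuine automorphism; it fixes $\bs\cG^1 P$, and since $\cH$ is generated by $\rho(\bs\cG^1 P)$ this is the precise reason $\exp(-\cD)$ induces the identity on cohomology, confirming your expectation. With this lemma supplied, your outline matches the paper's proof.
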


\subsection{Theorem \ref{thm:main} follows from Lemma \ref{lem:betaalter}}
\label{sec:thm-proof}

Lemma \ref{lem:betaalter} is proved in Section \ref{sec:betaalter} below.
Here we show that Theorem \ref{thm:main} is a consequence of 
Lemma \ref{lem:betaalter}. 
  
Our goal is to find 
$$
\tilde{\alpha} := \tilde{\alpha}_1 + \tilde{\alpha}_2 + \dots + \tilde{\alpha}_{n+1}\,, \qquad \ti{\al}_k \in \Hom_{\Coll}(\cG^k P, \cO)
$$
satisfying 
$$
\ti{\al}_k = \al_k\,, \qquad \forall~~ k \le n - 1
$$
and 
\begin{equation} 
\label{Curv-zero}
\Curv(\tilde{\alpha})(X) = 0 \qquad \forall ~~ X \in \cG^{\,\le  n+1} P.
\end{equation}

So we set 
\begin{equation}
\label{al-k-le-1n}
\ti{\al}_k : = \al_k\,, \qquad \forall~~ k \le n - 1 
\end{equation}
and observe that the unknown terms $\ti{\al}_{n}$ and $\ti{\al}_{n+1}$ show up only in the equations 
\begin{equation} 
\label{MC0}
\pa_{\cO}\tilde{\alpha}_n(X) + \alpha_{n-1}(\pa_{P}X) 
+ \frac{1}{2}\sum_{\substack{i+j=n, \\[0.1cm] i,j \ge 1}}[\alpha_i,\alpha_j](X) = 0
\quad X \in \cG^{n} P,
\end{equation}
\begin{equation} 
\label{MC1A}
\pa_{\cO}\tilde{\alpha}_{n+1}(Y) + 
\tilde{\alpha}_n(\pa_{P} Y) + [\al_1, \ti{\al}_n](Y) + 
\frac{1}{2}\sum_{\substack{i+j=n+1 \\ i,j < n}}[\al_i,\al_j](Y) = 0
\quad Y \in \cG^{n+1} P.
\end{equation}
Moreover, the unknown terms enter these equations linearly. 

Due to the finite dimensionality condition (see {\bf C2}), equations 
\eqref{MC0} and \eqref{MC1A} can be viewed as a finite dimensional inhomogeneous 
linear system for the unknown vectors $\ti{\al}_{n}$ and $\ti{\al}_{n+1}$. 

Thanks to Lemma \ref{lem:betaalter}, there exists a genuine MC element in $\Conv(P, \cO \otimes \bbK)$
$$
\beta = \sum_{k=1}^{\infty} \beta_k  \qquad \beta_k \in \Hom_{\Coll}(\cG^kP, \cO \otimes \bbK)
$$
such that 
$$
\beta_k = \al_k, \qquad \forall~~ k \le n-1.  
$$

Therefore, the linear system corresponding to equations \eqref{MC0} and \eqref{MC1A}
has a solution over the field $\bbK$. Thus, since both the coefficient matrix and 
the right hand side of this linear system are defined over $\bbQ$, we have a solution
over $\bbQ$. 

Finally, equation $\Curv(\tilde{\alpha})(X) = 0$ is satisfied for every $X \in \cG^{\, \le n-1} P$, 
since $\ti{\al}_k : = \al_k $ for $k \le n-1$ and the original $\al$ is an $n$-th MC-sprout. \qed

\section{The proof of Lemma \ref{lem:betaalter}}
\label{sec:betaalter}

Let us first prove the following statement.
\begin{prop}
\label{prop:beta-needed}
Let $n$ be an integer $\ge 2$ and $\al$ be an $n$-th MC sprout
in $\Conv(P, \cO)$ for which the diagram \eqref{diag-al-rho} commutes.
Then there exists a MC element
$$
\beta = \sum_{k=1}^{\infty} \beta_k, \qquad \beta_k \in \Hom_{\Coll}(\cG^k P, \cO \otimes \bbK )
$$ 
in $\Conv(P, \cO \otimes \bbK)$ such that 
\begin{equation}
\label{cG-1-all-good}
\beta \big|_{\cG^1 P} ~ = ~ \al \big|_{\cG^1 P}.
\end{equation}
\end{prop}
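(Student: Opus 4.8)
The plan is to use the hypothesis that $\cO \otimes \bbK$ is formal, which by Corollary~\ref{cor:zig-zag-shorter} is equivalent to the existence of a quasi-isomorphism $\Cobar(P^{\ds}) \otimes \bbK \to \cO \otimes \bbK$, i.e.\ a genuine MC element $\beta^{\circ} \in \Conv(P, \cO \otimes \bbK)$ whose associated operad morphism $F_{\beta^{\circ}}$ is a quasi-isomorphism. Such a $\beta^{\circ}$ need not satisfy \eqref{cG-1-all-good} on the nose, so the task is to modify it by a gauge transformation in the filtered dg Lie algebra $\Conv(P, \cO \otimes \bbK)$ until its restriction to $\cG^1 P$ agrees with $\al|_{\cG^1 P}$. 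First I would record that $\al|_{\cG^1 P}$ and $\beta^{\circ}|_{\cG^1 P}$ both land in $\cZ(\cO \otimes \bbK)$ (since all of $\bs\cG^1 P$ consists of cocycles in $\Cobar(P^{\ds})$), and that, thanks to commutativity of \eqref{diag-al-rho} together with Proposition~\ref{prop:only-cG1-for-q-iso}, the composites $\pi_{\cH} \circ \al|_{\cG^1 P}$ and $\pi_{\cH} \circ \beta^{\circ}|_{\cG^1 P}$ both represent generating cohomology classes --- in fact they agree up to a choice that I will need to control. The key point is that $F_{\beta^{\circ}}$ being a quasi-isomorphism forces $\pi_{\cH}\circ \beta^{\circ}|_{\cG^1 P}$ to equal $\rho|_{\bs\cG^1 P}$ composed with an automorphism of $\cH$ fixing the relevant classes; after precomposing $\beta^{\circ}$ with a suitable automorphism of the cooperad $P$ (which does not change the property of being a formality MC element) one may assume $\pi_{\cH}\circ\beta^{\circ}|_{\cG^1 P} = \pi_{\cH}\circ\al|_{\cG^1 P} = \al_{\cH}|_{\cG^1 P}$.

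With that normalization in place, the difference $\al|_{\cG^1 P} - \beta^{\circ}|_{\cG^1 P}$ is a collection map $\cG^1 P \to \cZ(\cO\otimes\bbK)$ whose composite with $\pi_{\cH}$ vanishes, hence it lands in $\ker(\cZ(\cO\otimes\bbK)\to\cH\otimes\bbK) = \pa_{\cO}(\cO\otimes\bbK)$. Using the splitting $\ti{\ms}$ of $\pa_{\cO}$ onto this image (exactly as constructed in the proof of Proposition~\ref{prop:2nd-exists}, now over $\bbK$), I can choose a degree $0$ element $\xi \in \cF_1\Conv(P, \cO\otimes\bbK)$ supported on $\cG^1 P$ with $\pa_{\cO}\,\xi|_{\cG^1 P} = \al|_{\cG^1 P} - \beta^{\circ}|_{\cG^1 P}$. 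Then I would gauge-transform: set $\beta := \exp([\xi,-])\beta^{\circ} - \tfrac{\exp([\xi,-])-1}{[\xi,-]}\pa\xi$ as in \eqref{al-isom-ti-al}, which is legitimate because $\Conv(P,\cO\otimes\bbK)$ is complete with respect to the filtration $\cF_{\bullet}$ (Condition~\ref{cond:P-filtered} via \eqref{cF-P}). Since $\xi \in \cF_1$, the degree-one filtration term is unaffected by the higher-order parts of the exponential, so $\beta|_{\cG^1 P} = \beta^{\circ}|_{\cG^1 P} + \pa_{\cO}\,\xi|_{\cG^1 P} = \al|_{\cG^1 P}$. Being a gauge transform of an MC element, $\beta$ is again a genuine MC element of $\Conv(P, \cO\otimes\bbK)$, which is what we want.

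The step I expect to be the main obstacle is the normalization in the first paragraph: showing that one can arrange $\pi_{\cH}\circ\beta^{\circ}|_{\cG^1 P} = \al_{\cH}|_{\cG^1 P}$, not merely that both sides generate $\cH$. This requires understanding how much freedom there is in the formality quasi-isomorphism and exhibiting an automorphism of the cooperad $P^{\ds}$ (or equivalently of $\Cobar(P^{\ds})$) that transports $\pi_{\cH}\circ\beta^{\circ}|_{\cG^1 P}$ onto $\rho|_{\bs\cG^1 P}$ while preserving Condition~\textbf{C1}. One clean way to do this is to observe that $H^{\bullet}(F_{\beta^{\circ}}) : \cH \to \cH$ is an operad automorphism determined on generators by $\pi_{\cH}\circ\beta^{\circ}|_{\cG^1 P}$; composing $\beta^{\circ}$ with the cooperad map induced by functoriality of $\Cobar$ applied to the dual of $(H^{\bullet}(F_{\beta^{\circ}}))^{-1}$ yields the desired normalized formality MC element. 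I would double-check that this composition still lands in $\Conv(P,\cO\otimes\bbK)$ with the correct finiteness and grading behavior, but the filtered/graded structure of $P$ makes that a routine verification.
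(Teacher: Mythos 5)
Your second paragraph (the gauge-transformation step) is essentially the same as the paper's: both of you take the difference on $\cG^1 P$, observe it is $\pi_{\cH}$-exact, split $\pa_{\cO}$ to get a degree-zero $\xi$ (the paper calls it $h$) supported on $\cG^1 P$, and apply the gauge formula \eqref{al-isom-ti-al}, noting that Condition \textbf{C1} kills all the higher Lie brackets on $\cG^1 P$. That part is fine, up to sign bookkeeping.

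The genuine gap is in your normalization step. You want to pass from an arbitrary formality MC element $\beta^{\circ}$ to one with $\pi_{\cH}\circ\beta^{\circ}|_{\cG^1 P} = \al_{\cH}|_{\cG^1 P}$, and you propose to do this by ``precomposing $\beta^{\circ}$ with a suitable automorphism of the cooperad $P$'' obtained ``by functoriality of $\Cobar$ applied to the dual of $(H^{\bullet}(F_{\beta^{\circ}}))^{-1}$''. This does not work: $P^{\ds}$ is not the linear dual of $\cH$, it is a (typically much larger) dg cooperad encoding the relations and all higher syzygies of $\cH$. An operad automorphism $T$ of $\cH$ need not lift to a dg-cooperad automorphism of $P^{\ds}$ (equivalently, to a strict dg-operad automorphism of $\Cobar(P^{\ds})$): to extend $T$ from $\cG^1 P$ to $\cG^{\ge 2}P$ compatibly with $\pa_P$ one must successively kill obstructions, and there is no reason this can be done on the nose. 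What the paper does instead is invoke Corollary~\ref{cor:lift} to lift $T$ to a self-map $\ti{T}$ of $\Cobar(P^{\ds})\otimes\bbK$ making the square with $\rho$ commute only \emph{up to homotopy}; this is enough because one only needs $H^{\bullet}(F\circ\ti{T}) = H^{\bullet}(\rho)$. That lifting result is not ``routine'' --- it rests on Proposition~\ref{prop:Conv-q-iso} (that $\Conv(P,-)$ preserves quasi-isomorphisms of operads, using completeness of the filtration $\cF_{\bullet}\Conv$) together with the Goldman--Millson theorem \cite[Theorem~1.1]{GMtheorem}. You flagged this as the ``main obstacle'' but the mechanism you suggest to clear it is not correct; the homotopy-lifting property of the cobar resolution is the missing ingredient.
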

\begin{remark}
\label{rem:F-beta-q-iso}
Proposition \ref{prop:only-cG1-for-q-iso} and Condition {\bf C3} imply that the operad morphism 
$$
F_{\beta} :  \Cobar(P^{\ds}) \otimes \bbK \to \cO \otimes \bbK
$$
corresponding to the above MC element $\beta$ is a quasi-isomorphism. 
\end{remark}
\begin{proof}[Proposition \ref{prop:beta-needed}]
Since $\cO \otimes \bbK$ is formal, there exists a quasi-isomorphism of dg operads
\begin{equation}
\label{F}
F : \Cobar(P^{\ds}) \otimes \bbK  \to \cO \otimes \bbK 
\end{equation}

Both $F$ and $\rho$ \eqref{rho-cH} induce the isomorphisms of operads
\begin{equation}
\label{H-F}
H^{\bul}(F) : H^{\bul} \big( \Cobar(P^{\ds}) \otimes \bbK \big) \to \cH \otimes \bbK 
\end{equation}
and 
\begin{equation}
\label{H-rho}
H^{\bul}(\rho) : H^{\bul} \big( \Cobar(P^{\ds}) \otimes \bbK \big) \to \cH \otimes \bbK.  
\end{equation}

Hence there exists (a unique) operad automorphism 
$$
T  :  \cH \otimes \bbK \to \cH \otimes \bbK
$$
such that 
\begin{equation}
\label{H-F-rho-T}
T \circ H^{\bul}(F) =  H^{\bul}(\rho).
\end{equation}

Due to Corollary \ref{cor:lift} from Appendix \ref{app:lift}, there exists a map of operads 
$$
\ti{T} ~:~ \Cobar(P^{\ds}) \otimes \bbK ~\to~ \Cobar(P^{\ds}) \otimes \bbK
$$
such that the diagram  
\begin{equation}
\label{diag-ti-T}
\begin{tikzpicture}
\matrix (m) [matrix of math nodes, row sep=2em, column sep=2em]
{  \Cobar(P^{\ds}) \otimes \bbK &  \Cobar(P^{\ds}) \otimes \bbK ~ \\
  \cH \otimes \bbK   &   \cH \otimes \bbK \\ };
\path[->, font=\scriptsize]
(m-1-1) edge node[above] {$\ti{T}$} (m-1-2)
edge node[left] {$\rho$} (m-2-1)
(m-1-2) edge  node[right] {$\rho$} (m-2-2)
(m-2-1) edge node[above] {$T$} (m-2-2) ;
\end{tikzpicture}
\end{equation}
commutes up to homotopy. 

Since $\rho \circ \ti{T}$ is homotopic to $T\circ \rho$, $\rho$ is a quasi-isomorphism, and 
$T$ is an automorphism of operads, $\ti{T}$ is a quasi-isomorphism of dg operads. 
Hence so is the composition 
\begin{equation}
\label{ti-F}
\ti{F} : = F \circ \ti{T} ~:~  \Cobar(P^{\ds}) \otimes \bbK ~\to~ \cO \otimes \bbK.
\end{equation}

Again, since diagram \eqref{diag-ti-T} commutes up to homotopy, we have
\begin{equation}
\label{H-ti-T}
H^{\bul} (\ti{T}) = H^{\bul}(\rho)^{-1} \circ T \circ H^{\bul}(\rho).
\end{equation}

Combining \eqref{H-F-rho-T} with \eqref{H-ti-T}, we deduce that 
$$
H^{\bul}(\ti{F}) =  H^{\bul} (F) \circ H^{\bul}(\ti{T})  =  
T^{-1} \circ  H^{\bul}(\rho) \circ 
H^{\bul}(\rho)^{-1} \circ T \circ H^{\bul}(\rho) = H^{\bul}(\rho). 
$$
In other words, both $\ti{F}$ and $\rho$ induce the same map at the level of cohomology. 

Let us denote by $\ti{\beta}$ the MC element in $\Conv(P, \cO \otimes \bbK)$
corresponding to the map $\ti{F}$.  

Since the diagram \eqref{diag-al-rho} for $\al$ commutes and the maps 
$\ti{F}$, $\rho$ induce the same map at the level of cohomology, we have
$$
\pi_{\cH} \circ (\ti{\beta} - \al) \big|_{\cG^1 P}  ~ = ~ 0, 
$$
where $\pi_{\cH}$ is the canonical projection from $\cZ(\cO) \to \cH$. 

Hence, composing $(\ti{\beta} - \al) \big|_{\cG^1 P}$ with a splitting \eqref{ti-ms}, we get 
a degree $0$ map of collections 
\begin{equation}
\label{h-cG1}
h : =  \ti{\ms} \circ (\ti{\beta} - \al)~:~ \cG^1 P \otimes \bbK \to \cO \otimes \bbK
\end{equation}
such that 
$$
\ti{\beta} (X) - \al (X) = \pa_{\cO} \circ  h (X) \qquad \forall~~ X \in \cG^1 P
$$
or equivalently\footnote{Recall that $\pa_P \big|_{\cG^1 P}  = 0$.}
\begin{equation}
\label{ti-beta-al}
\ti{\beta}(X) - \al (X) = \pa_{\cO} \circ  h (X) + h \circ \pa_P (X)  \qquad \forall~~ X \in \cG^1 P. 
\end{equation}

Let us extend $h$ to the degree zero element in $\Conv(P, \cO \otimes \bbK)$ by setting
$$
h \big|_{\cG^{\,> 1} P } = 0,
$$
and form the new MC element of $\Conv(P, \cO \otimes \bbK)$
\begin{equation}
\label{beta}
\beta : =   \exp([h, ~]) \ti{\beta}  ~  - ~ \frac{\exp([h, ~]) - 1}{[h, ~]} \, \pa h,
\end{equation}
where $\pa$ is the differential on $\Conv(P, \cO \otimes \bbK)$.

Equation \eqref{ti-beta-al} and Condition {\bf C1} imply that equation 
\eqref{cG-1-all-good} holds.

So the desired statement is proved. 
\end{proof}

Note that Proposition \ref{prop:beta-needed} already implies the statement of 
Lemma \ref{lem:betaalter} for $n=2$. So we can now assume that $n \ge 3$.
For this case, Lemma \ref{lem:betaalter} is a consequence of the following statement.
\begin{prop}
\label{prop:the-step}
Let $n > m \ge 2$ be integers and
$$
\al =  \sum_{k=1}^{n} \al_k\,, \qquad \al_k \in \Hom_{\Coll}(\cG^k P, \cO)
$$
be an $n$-th MC-sprout in $\Conv(P, \cO)$ for which the diagram \eqref{diag-al-rho} commutes. 
Furthermore, let 
$$
\beta = \sum_{k=1}^{\infty} \beta_k  \qquad \beta_k \in \Hom_{\Coll}(\cG^k P, \cO \otimes \bbK)  
$$
be a genuine MC element in $\Conv(P, \cO \otimes \bbK)$ such that 
\begin{equation}
\label{k-less-than-m}
\beta_k = \al_k \qquad \forall~~1 \le k \le m-1.
\end{equation}
Then there exists a MC element 
$$
\ti{\beta} = \sum_{k=1}^{\infty} \ti{\beta}_k  \qquad \ti{\beta}_k \in \Hom_{\Coll}(\cG^k P, \cO \otimes \bbK)  
$$
of $\Conv(P, \cO \otimes \bbK)$ such that  $\ti{\beta}_k = \al_k$ for every $k \le m$.
\end{prop}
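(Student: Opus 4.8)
My plan is to follow the strategy of the proof of Proposition \ref{prop:beta-needed} and repair the given genuine Maurer--Cartan element $\beta$ by a gauge transformation whose components live only in $\cG$-degrees $m-1$ and $m$, so that the repaired element agrees with $\al$ not merely on $\cG^{\le m-1}P$ but on $\cG^{\le m}P$.

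I would start by analyzing $\gamma := \al_m - \beta_m \in \Hom_{\Coll}(\cG^m P, \cO\otimes\bbK)$. Subtracting the $\cG^m P$-components of $\Curv(\al)=0$ (valid on $\cG^{\le n}P \supseteq \cG^m P$ since $\al$ is an $n$-th MC-sprout) and $\Curv(\beta)=0$, and using $\beta_k=\al_k$ for $k<m$ (which makes the terms $\beta_{m-1}\circ\pa_P$ and $\tfrac12\sum_{i+j=m}[\beta_i,\beta_j]$ agree with the corresponding terms built from $\al$, all indices being $\le m-1$), one obtains $\pa_{\cO}\circ\gamma=0$. Hence $\gamma$ takes values in $\cZ(\cO\otimes\bbK)$, so that $\bar\gamma:=\pi_{\cH}\circ\gamma \in \Hom_{\Coll}(\cG^m P,\cH\otimes\bbK)$ is defined. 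Performing the same subtraction on the $\cG^{m+1}P$-components --- this is precisely where the hypothesis $n>m$ enters, since it makes $\Curv(\al)|_{\cG^{m+1}P}=0$ available --- gives $\pa_{\cO}(\al_{m+1}-\beta_{m+1}) = -\gamma\circ\pa_P-[\al_1,\gamma]$. Since $\pi_{\cH}$ is compatible with the operadic multiplications and $\pi_{\cH}\circ\al_1=\al_{\cH}$ by the commutativity of \eqref{diag-al-rho} (recall that $\al_{\cH}$ is concentrated in $\cG^1P$ by \textbf{C3}), applying $\pi_{\cH}$ yields $\bar\gamma\circ\pa_P+[\al_{\cH},\bar\gamma]=0$. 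In other words, $\bar\gamma$ is a cocycle of the twisted differential $\pa^{\al_{\cH}}:=\pa+[\al_{\cH},-]$ on $\Conv(P,\cH\otimes\bbK)$; since $P=\bigoplus_k\cG^kP$, this complex is graded by $\cG$-degree and $\pa^{\al_{\cH}}$ raises $\cG$-degree by exactly $1$.

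The heart of the proof is to show that $\bar\gamma$ is in fact a $\pa^{\al_{\cH}}$-coboundary coming from $\cG$-degree $m-1$: there is a degree $0$ element $\bar h\in\Hom_{\Coll}(\cG^{m-1}P,\cH\otimes\bbK)$ with $\pa^{\al_{\cH}}(\bar h)=\bar\gamma$. I expect this to be the main difficulty. It cannot follow from a blanket vanishing of cohomology, since $\big(\Conv(P,\cH\otimes\bbK),\pa^{\al_{\cH}}\big)$ is the deformation complex of the quasi-isomorphism $\rho$ --- i.e.\ the (weight-graded) operadic deformation cohomology of $\cH$ with coefficients in $\cH\otimes\bbK$ --- and this need not vanish in $\cG$-degree $\ge 2$ (for instance when $\cO=\Br$, so $\cH=\Ger$). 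Rather, the point must be that $\bar\gamma$ is not an arbitrary cocycle: it measures the discrepancy between the $\cG^mP$-components of two extensions of one and the same obstruction, namely $\al_m$ --- the $\cG^mP$-component of the $(m+1)$-st MC-sprout $\al^{[m+1]}$, available because $n>m$ --- and $\beta_m$ --- the $\cG^mP$-component of the genuine MC element $\beta$. Using the lifting property for cobar resolutions of Appendix \ref{app:lift}, together with the fact that $F_{\beta}$ and $\rho$ induce the same map on cohomology --- which holds here since $\beta_1=\al_1$ and \eqref{diag-al-rho} commutes, exactly as in the proof of Proposition \ref{prop:beta-needed} --- I would produce the required $\bar h$ over $\bbK$.

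Granting $\bar h$, the rest is bookkeeping. Choose a map of collections $\ms:\cH\to\cZ(\cO)$ with $\pi_{\cH}\circ\ms=\id$, set $h_{m-1}:=\ms\circ\bar h:\cG^{m-1}P\to\cZ(\cO\otimes\bbK)$, and observe that $-\gamma+h_{m-1}\circ\pa_P-[\al_1,h_{m-1}]$ is cocycle-valued with vanishing cohomology class (its image under $\pi_{\cH}$ being $-\bar\gamma+\pa^{\al_{\cH}}(\bar h)=0$, up to sign); hence it equals $\pa_{\cO}\circ h_m$ for some $h_m\in\Hom_{\Coll}(\cG^mP,\cO\otimes\bbK)$. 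Extend $h:=h_{m-1}+h_m$ by zero to a degree $0$ element of $\Conv(P,\cO\otimes\bbK)$ and put $\ti{\beta}:=\exp([h,~])\beta-\frac{\exp([h,~])-1}{[h,~]}\pa h$. Then $\ti\beta$ is a genuine MC element, being a gauge transform of $\beta$. Because the components of $h$ sit in $\cG$-degrees $m-1$ and $m$, only the components $\ti\beta_k$ with $k\ge m-1$ are affected; $h_{m-1}$ being cocycle-valued gives $\ti\beta_{m-1}=\beta_{m-1}=\al_{m-1}$, and the equation defining $h_m$ was arranged exactly so that $\ti\beta_m=\al_m$. Thus $\ti\beta_k=\al_k$ for all $k\le m$, as required.
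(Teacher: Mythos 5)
Your setup is correct and matches the paper: defining $\gamma := \al_m - \beta_m$ (the paper works with $\psi_m = -\pi_{\cH}\circ\gamma$), deriving $\pa_{\cO}\circ\gamma = 0$ from the $\cG^m P$-components of the (sprout/genuine) MC equations, and deriving $\pi_{\cH}\circ\gamma\circ\pa_P + [\al_{\cH}, \pi_{\cH}\circ\gamma] = 0$ from the $\cG^{m+1}P$-components is exactly the computation in Section \ref{sec:proof}.

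However, the step you yourself flag as ``the heart of the proof'' --- producing $\bar h\in\Hom_{\Coll}(\cG^{m-1}P,\cH\otimes\bbK)$ with $(\pa_P + [\al_{\cH},\,-\,])(\bar h)=\bar\gamma$ --- is a genuine gap, not merely a difficult step. The cocycle $\bar\gamma$ need \emph{not} be a coboundary in the twisted complex $(\Conv(P,\cH\otimes\bbK),\pa_P+[\al_{\cH},\,-\,])$: as you correctly observe, this complex can have nontrivial cohomology in the relevant degrees (e.g.\ for $\cH=\Ger$), and nothing in the hypotheses forces the class of $\bar\gamma$ to vanish. Two extensions of a common $(m-1)$-truncation generically differ, modulo $\pa_{\cO}$-exact terms, by a cocycle of nontrivial class, and no gauge transformation inside $\Conv(P,\cO\otimes\bbK)$ can change that class. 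Your appeal to the lifting property of Appendix \ref{app:lift} does not close this: Corollary \ref{cor:lift} gives homotopy lifts of operad morphisms along quasi-isomorphisms, but does not imply exactness of $\bar\gamma$, and a putative gauge transformation obtained that way need not be concentrated in $\cG$-degree $m-1$, which is what your one-step argument requires.

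The paper's resolution is of a fundamentally different nature and does not attempt to show $\bar\gamma$ is exact. Proposition \ref{prop:psi-lift} lifts the (possibly cohomologically nontrivial) cocycle $\psi_m$ to a $\pa$-closed, weight-\emph{lowering} derivation $\cD\in\Der^{(1-m)}$ of $\Cobar(P^{\ds})\otimes\bbK$, and Claim \ref{cl:exp-OK} (via the local nilpotence in Claim \ref{cl:loc-nilpot}) shows $\cD$ exponentiates to a dg operad \emph{automorphism} of the source, $\exp(-\cD)$. Precomposing $F_{\beta}$ with $\exp(-\cD)$ produces a new MC element $\beta^{\dia}$ that still agrees with $\al$ in $\cG$-degrees $<m$ but for which $\pi_{\cH}\circ(\beta^{\dia}_m-\al_m)=0$: the offending cohomology class has been cancelled by acting on the source $\Cobar(P^{\ds})\otimes\bbK$, not by a gauge transformation in the convolution Lie algebra. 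Only \emph{after} this correction does the paper apply a gauge transformation --- and it is concentrated in $\cG$-degree $m$ alone, not in degrees $m-1$ and $m$ --- to arrange $\ti\beta_m=\al_m$ exactly. In short, the move you are missing is the passage from cocycles in $\Conv(P,\cH\otimes\bbK)$ to automorphisms of the cobar resolution (Section \ref{sec:Der-prime}); this is what allows one to proceed without ever needing $\bar\gamma$ to be a coboundary.
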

%

\subsection{The sub-spaces $\Der^{(t)} \subset \Der \big(\Cobar(P^{\ds}) \big)$}
\label{sec:Der-prime}

Let us recall that, as the operad in the category of graded 
vector spaces\footnote{In this subsection, we assume that the base field is 
any field of characteristic zero.},  $\Cobar(P^{\ds})$ is the free operad generated 
by the collection $\bs P$. 
So, using the grading on the dg pseudo-operad $P$, we introduce the following 
grading on $\Cobar(P^{\ds})$: 
\begin{equation}
\label{weights-Cobar}
\Cobar(P^{\ds}) = \bigoplus_{q \ge 0} \Cobar(P^{\ds})^{(q)},    
\end{equation}
where $\Cobar(P^{\ds})^{(q)}$ is spanned by operadic monomials in 
${\bf s} X_1 \in \bs \cG^{k_1} P, ~ {\bf s} X_2 \in \bs \cG^{k_2} P, ~ \dots $ 
such that 
$$
\sum_{i \ge 1} (k_i -1)  = q.
$$
For example,  $\Cobar(P^{\ds})^{(0)}$ is precisely $\Op (\bs \cG^1 P)$ and 
$\Cobar(P^{\ds})^{(1)}$ is spanned by operadic monomials in 
$\Op (\bs \cG^1 P \oplus \bs \cG^2 P)$ for which a vector in $\bs \cG^2 P$ 
appears exactly once.

This grading is clearly compatible with the operadic multiplications on $\Cobar(P^{\ds})$.
In addition, Conditions \textbf{C1} and \textbf{C3} imply that
\begin{equation}
\label{diff-weight}
\pa \big( \Cobar(P^{\ds})^{(q)} \big) ~\subset~   \Cobar(P^{\ds})^{(q-1)} \qquad \forall ~~ q \ge 0,
\end{equation}
\begin{equation}
\label{rho-weight}
\rho\big|_{ \Cobar(P^{\ds})^{(q)} } = 0 \qquad \forall~~ q \ge 1, 
\end{equation}
and the map 
\begin{equation}
\label{rho-surj}
\rho\big|_{ \Cobar(P^{\ds})^{(0)} } ~:~  \Cobar(P^{\ds})^{(0)}  ~\to~ \cH 
\end{equation}
is onto. 

We claim that 
\begin{claim}
\label{cl:h-q-exist}
There exist maps of collections (for $q \ge 1$) of degree $-1$
\begin{equation}
\label{h-q}
h_q : \cZ\big( \Cobar(P^{\ds})^{(q)} \big) \to \Cobar(P^{\ds})^{(q+1)}
\end{equation}
and a degree $-1$ map of collections 
\begin{equation}
\label{h-0}
h_0 : \ker \big( \Cobar(P^{\ds})^{(0)} ~\stackrel{\rho}{\longrightarrow} ~\cH  \big) \to \Cobar(P^{\ds})^{(1)}
\end{equation}
such that 
$$
\pa \circ h_q (Y) = Y \qquad \forall~~ Y \in \cZ\big( \Cobar(P^{\ds})^{(q)} \big), ~~ q \ge 1, 
$$
$$
\pa \circ h_0 (Y) = Y \qquad \forall~~ Y \in  \ker \big( \Cobar(P^{\ds})^{(0)} ~\stackrel{\rho}{\longrightarrow} ~\cH  \big). 
$$
\end{claim}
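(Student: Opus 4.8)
The plan is to construct the contracting homotopies $h_q$ (and $h_0$) one at a time, by descending induction on $q$, exploiting the fact that the graded pieces $\Cobar(P^{\ds})^{(q)}$ assemble into a complex (by \eqref{diff-weight}) whose total cohomology is concentrated in weight $0$ and is carried isomorphically onto $\cH$ by $\rho$. Concretely, the resolution property of $\rho : \Cobar(P^{\ds}) \to \cH$ together with the fact that $\cH$ sits entirely in weight $0$ tells us that the weight-graded complex
$$
\cdots \to \Cobar(P^{\ds})^{(2)} \xrightarrow{\pa} \Cobar(P^{\ds})^{(1)} \xrightarrow{\pa} \Cobar(P^{\ds})^{(0)} \xrightarrow{\rho} \cH \to 0
$$
is exact in each arity. (Here one should be a little careful: the differential on $\Cobar(P^{\ds})$ has a component that preserves weight, coming from $\pa_P$, and a component that lowers weight by $1$, coming from the cobar differential; it is only the weight-lowering component that figures in the displayed complex, while the weight-preserving component is what makes the claim non-trivial. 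Both $\eqref{diff-weight}$ and $\eqref{rho-weight}$, already established in the excerpt, are exactly what is needed to make sense of this.) Since each arity component of $\Cobar(P^{\ds})$ is, after forgetting the differential, a finite direct sum of finite-dimensional pieces in each cohomological degree (by Condition \textbf{C2} applied to $P$ and the freeness of $\Cobar(P^{\ds})$), every sub- and quotient-space in sight is finite dimensional, so there is no obstruction to choosing set-theoretic (indeed $S_n$-equivariant, using averaging in characteristic zero) linear splittings.

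First I would make precise the exactness statement: for $q \ge 1$, a cocycle $Y \in \cZ\big(\Cobar(P^{\ds})^{(q)}\big)$ is automatically a boundary, i.e. $Y = \pa Z$ for some $Z \in \Cobar(P^{\ds})^{(q+1)}$, because $H^{\bul}\big(\Cobar(P^{\ds})\big) \cong \cH$ is concentrated in weight $0$; and for $q=0$, an element $Y \in \ker\big(\Cobar(P^{\ds})^{(0)} \xrightarrow{\rho} \cH\big)$ which is a cocycle is a boundary of something in $\Cobar(P^{\ds})^{(1)}$ for the same reason, while a general element of that kernel need not be a cocycle — so one must check that $Y - \pa(\text{correction})$ becomes a cocycle, and here \eqref{rho-weight} and the surjectivity \eqref{rho-surj} of $\rho$ on weight $0$ do the bookkeeping. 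The cleanest way to organize all of this is to observe that the weight filtration gives a bounded-below, arity-wise finite-dimensional filtered complex whose associated spectral sequence degenerates onto $\cH$, and then to build $h_q$ by the standard "choose a splitting of a surjection of vector spaces" recipe: on the image $\pa\big(\Cobar(P^{\ds})^{(q+1)}\big) \subseteq \Cobar(P^{\ds})^{(q)}$ pick an equivariant linear section $s_q$ of $\pa$, and set $h_q := s_q$ on cocycles (which all lie in that image), extended arbitrarily equivariantly off it. The map $h_0$ is handled identically after restricting to the stated kernel.

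The main obstacle — and the only place where genuine content enters beyond linear algebra — is verifying the exactness of the weight-graded complex, i.e. that $H^{\bul}\big(\Cobar(P^{\ds})\big)$ really is concentrated in weight $0$ and identified with $\cH$ via $\rho$ on that piece. This is where I would lean on the hypothesis that $\rho$ is a cobar \emph{resolution} (a quasi-isomorphism) together with Conditions \textbf{C1} and \textbf{C3}: \textbf{C1} guarantees the weight grading is well-defined and compatible with $\pa$ in the manner of \eqref{diff-weight}, and \textbf{C3} (that $\cH$ is generated by $\rho(\bs\cG^1 P)$ and $\rho$ kills $\bs\cG^{\ge 2}P$) is what forces all higher-weight cohomology to vanish. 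Once that exactness is in hand, the construction of the $h_q$ is a routine application of equivariant splittings over a field of characteristic zero, using the finite-dimensionality from \textbf{C2} to guarantee such splittings exist (finite-dimensionality is not strictly necessary for the existence of a splitting, but it makes the equivariant averaging and all arity-wise arguments completely unproblematic, and it is available to us by hypothesis).
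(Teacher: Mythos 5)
Your overall plan is right — establish exactness of the weight-graded complex using that $\rho$ is a quasi-isomorphism and that $\rho$ kills everything of positive weight, then build $h_q$ (and $h_0$) by choosing $S_n$-equivariant splittings, which is unproblematic over a field of characteristic zero. This is exactly the content of the paper's one-line proof. However, your parenthetical remark about the weight grading is factually wrong and contaminates your argument in two places.

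You write that ``the differential on $\Cobar(P^{\ds})$ has a component that preserves weight, coming from $\pa_P$, and a component that lowers weight by $1$, coming from the cobar differential.'' This is incorrect: with the weight of a monomial defined as $\sum_i(k_i-1)$ over generators $\bs X_i \in \bs\cG^{k_i}P$, the internal differential $\pa_P$ sends $\cG^k P$ to $\cG^{k-1}P$ (Condition \textbf{C1}), so it lowers the weight of a generator from $k-1$ to $k-2$, i.e. it strictly decreases weight. The cobar part of $\pa$ also strictly decreases weight, since it replaces a weight-$(k-1)$ generator by a pair of generators of total weight $(r_1-1)+(r_2-1)=k-2$. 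This is precisely the content of \eqref{diff-weight}, which is an inclusion for the \emph{entire} differential $\pa$, not for a single summand. Consequently there is no weight-preserving piece, the spectral sequence you invoke is vacuous (its $E_0$ differential is zero), and your worry about ``correcting'' a $Y \in \ker\rho \cap \Cobar(P^{\ds})^{(0)}$ until it becomes a cocycle is unnecessary: such $Y$ is automatically a cocycle because $\pa Y$ lives in weight $-1$, which is zero.

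Once you take \eqref{diff-weight} at face value, the argument is straightforward and needs no spectral sequence: cocycles and coboundaries of $\Cobar(P^{\ds})$ are both homogeneous in weight, so the cohomology is weight-graded; injectivity of $H^\bul(\rho)$ together with \eqref{rho-weight} forces the cohomology in weight $q\ge 1$ to vanish and, in weight $0$, to be carried isomorphically to $\cH$. A $\pa$-closed $Y$ in weight $q\ge 1$ is then $\pa Z$ for some $Z$, and weight-homogeneity of $\pa$ lets you take $Z$ in weight $q+1$; for $q=0$ and $Y\in\ker\rho$, the class $[Y]$ is killed by $H^\bul(\rho)$ and hence vanishes, giving $Y=\pa Z$ with $Z$ in weight $1$. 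The equivariant splittings then give $h_q$ and $h_0$. Finite-dimensionality from \textbf{C2} is, as you note, more than is needed; characteristic zero alone suffices for the averaging.
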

\begin{proofOld}
Since $\rho$ is a quasi-isomorphism, the existence of the desired maps 
follows from \eqref{diff-weight}, \eqref{rho-weight}, \eqref{rho-surj} and 
the fact that we work with collections of vector spaces over a field of characteristic zero. 
\end{proofOld}

\vspace{0.18cm}

Let us denote by $\al_{\id}$ the MC element of $\Conv(P, \Cobar(P^{\ds}))$ corresponding to 
$$
\id :  \Cobar(P^{\ds}) \to  \Cobar(P^{\ds})
$$ 
and consider 
\begin{equation}
\label{Conv-P-Cobar}
\Conv(P, \Cobar(P^{\ds}))
\end{equation}
as the cochain complex with the differential $\pa + \pa_P + [\al_{\id}, ~]$, 
where $\pa$ (resp. $\pa_P$) is the differential coming from the one on 
$\Cobar(P^{\ds})$ (resp. $P$). 

Since $\Cobar(P^{\ds})$ is freely generated by $\bs P$, the assignment 
$$
\cD \mapsto \cD  \big|_{\bs P} ~\circ~ \bs
$$
gives us an isomorphism of graded vector spaces 
\begin{equation}
\label{Der-Conv}
\Der \big(\Cobar(P^{\ds}) \big) ~\cong~ \Conv(P, \Cobar(P^{\ds}))
\end{equation}
with the obvious shift: every degree $d$ derivation $\cD$ corresponds to a degree $d+1$
vector in $\Conv(P, \Cobar(P^{\ds}))$. 

Using the grading \eqref{weights-Cobar} on  $\Cobar(P^{\ds})$, we 
introduce the following subspaces of \eqref{Conv-P-Cobar} for $t \in \bbZ$ 
\begin{equation}
\label{Conv-P-Cobar-grading}
\cL^{(t)} : =
\big\{ f \in  \Conv(P, \Cobar(P^{\ds})) ~\big|~
f(\cG^q P) \subset \Cobar(P^{\ds})^{(q-1) + t}, ~~~ \forall ~ q \ge 1  \big\}. 
\end{equation}

Let us denote by $\{ \Der^{(t)} \}_{t \in \bbZ}$
the corresponding subspaces in $\Der \big(\Cobar(P^{\ds}) \big)$, i.e. 
\begin{equation}
\label{Der-t} 
\Der^{(t)}  : = \big\{ \cD \in \Der \big(\Cobar(P^{\ds}) \big) ~\big|~  \cD \big|_{\bs P} \circ \bs \in \cL^{(t)} \big\}. 
\end{equation}

It is clear that the commutator $[~, ~]$ on $\Der \big(\Cobar(P^{\ds}) \big)$ satisfies 
\begin{equation}
\label{brack-with-wghts}
[~,~] \, : \,  \Der^{(t_1)} \otimes \Der^{(t_2)} \subset \Der^{(t_1 + t_2)} \qquad \forall~~ t_1, t_2 \in \bbZ.
\end{equation}
Moreover, due to \eqref{diff-weight}
\begin{equation}
\label{diff-weight-Der}
[\pa, ~] :  \Der^{(t)}  \to \Der^{(t-1)} \qquad \forall ~~ t \in \bbZ,
\end{equation}
where $\pa$ is the full differential on $\Cobar(P^{\ds})$. 

Let us prove the following statement 
\begin{claim}
\label{cl:loc-nilpot}
Let $t$ be a negative integer and $\cD$ be a degree $0$ derivation 
in $ \Der^{(t)}$. Then $\cD$ acts locally nilpotently on $\Cobar(P^{\ds})$, i.e. 
for every $X \in \Cobar(P^{\ds})$, there exists an integer $m$ such that 
$$
\cD^{m}(X) = 0. 
$$
\end{claim}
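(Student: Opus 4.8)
The plan is to exploit the weight grading $\Cobar(P^{\ds})=\bigoplus_{q\ge 0}\Cobar(P^{\ds})^{(q)}$ together with the arity (or, more generally, the $\cF^m$-)filtration on $\Cobar(P^{\ds})$. Fix a negative integer $t$ and a degree $0$ derivation $\cD\in\Der^{(t)}$. By definition \eqref{Der-t}, $\cD$ sends a generator ${\bf s}X$ with $X\in\cG^q P$ into $\Cobar(P^{\ds})^{(q-1)+t}$; since $\cD$ is a derivation, on an operadic monomial that is a composite of $r$ generators ${\bf s}X_1,\dots,{\bf s}X_r$ with $X_i\in\cG^{q_i}P$ (so the monomial lives in weight $\sum_i(q_i-1)$), the element $\cD$ of that monomial is a sum of $r$ terms, each obtained by replacing one ${\bf s}X_i$ by $\cD({\bf s}X_i)$. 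Each such term therefore has weight $\sum_i(q_i-1)+t$, i.e. $\cD$ raises weight by exactly $t<0$. Iterating, $\cD^m$ lowers weight by $m|t|$, so $\cD^m$ vanishes on every element of $\Cobar(P^{\ds})$ of weight $<m|t|$.

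The remaining point is that every fixed $X\in\Cobar(P^{\ds})$ has bounded weight, so choosing $m$ with $m|t|>q(X)$ (where $q(X)$ is the top weight appearing in $X$) forces $\cD^m(X)=0$. This is immediate because any $X$ is a finite sum of operadic monomials, each of which has a well-defined finite weight, so the set of weights occurring in $X$ is finite and bounded. First I would record the weight-raising property of a single application of $\cD$ as a short lemma-within-the-proof (essentially the Leibniz computation above, using that the weight grading is compatible with operadic composition, as noted right after \eqref{weights-Cobar}); then I would note that $\cD^m$ raises weight by $mt$, i.e. lowers it by $m|t|$, using \eqref{brack-with-wghts}-style bookkeeping but really just induction on $m$; finally I would invoke finiteness of the weight support of any given $X$ to conclude.

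I do not anticipate a genuine obstacle here — the statement is essentially a grading/degree-counting argument. The only mild subtlety to be careful about is that local nilpotence is a pointwise statement: the bound $m$ depends on $X$ (specifically on its top weight), not on $\cD$ alone, and indeed $\cD$ need not be globally nilpotent since $\Cobar(P^{\ds})$ has elements of arbitrarily large weight. I would make sure the write-up states this dependence explicitly, so that later uses (e.g. forming $\exp(\cD)$ or $\exp([\cD,-])$ on this space) are on firm footing. Another point worth a sentence: the weight grading is a grading of $\Cobar(P^{\ds})$ as an operad in graded vector spaces only — $\cD$ need not preserve the internal cohomological degree pattern beyond what degree $0$ forces — but this plays no role, since the argument only tracks the weight $q$.
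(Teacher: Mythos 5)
Your proposal is correct and takes essentially the same approach as the paper: both arguments rest on the observation that $\cD$ lowers the weight in the grading \eqref{weights-Cobar} by $|t|$ while $\Cobar(P^{\ds})^{(r)}=\bfzero$ for $r<0$. The only cosmetic difference is that the paper first reduces to generators $X\in\bs\cG^k P$ and then notes $\cD^m(X)\subset\Cobar(P^{\ds})^{((k-1)+mt)}$, whereas you make the Leibniz-rule bookkeeping on a general operadic monomial explicit and then bound the weight support of an arbitrary element; your phrasing is, if anything, slightly more self-contained since it makes the passage from generators to all monomials manifest rather than implicit.
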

\begin{proofOld}
Since every vector in $\Cobar(P^{\ds})$ is a finite linear combination of operadic monomials
in ${\bf s} P$, it suffices to prove that for every $X \in {\bf s} P$, there exists $m$ such that
$$
\cD^m (X) = 0. 
$$

Again, since every $X \in {\bf s} P$ is a linear combination of vectors in $\bs \cG^{k} P$ for 
various $k$'s, we may assume without loss of generality, that  $X \in \bs \cG^k P$ for some $k \ge 1$.  

By definition of $\Der^{(t)}$, we have
$$
\cD^m (X) \subset \Cobar(P^{\ds})^{((k -1) + m t )}. 
$$
So the desired statement follows from the fact that 
$$
\Cobar(P^{\ds})^{(r )} = \bfzero \qquad \forall ~~ r < 0.  
$$
\end{proofOld}

Claim \ref{cl:loc-nilpot} implies that
\begin{claim}
\label{cl:exp-OK}
For every negative integer $t$, every $\pa$-closed degree degree $0$ derivation 
$$
\cD \in  \Der^{(t)} \subset \Der \big(\Cobar(P^{\ds}) \big)
$$
gives us the automorphism of the dg operad $\Cobar(P^{\ds})$
$$
\exp(\cD) : \Cobar(P^{\ds}) \stackrel{\cong}{\longrightarrow} \Cobar(P^{\ds}).  
$$
\end{claim}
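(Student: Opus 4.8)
The plan is to verify that $\exp(\cD)$ is a well-defined endomorphism of graded operads and then that it commutes with the differential, so that it is in fact a dg operad automorphism. First I would use Claim \ref{cl:loc-nilpot}: since $\cD \in \Der^{(t)}$ with $t < 0$ acts locally nilpotently, for each $X \in \Cobar(P^{\ds})$ the sum $\exp(\cD)(X) = \sum_{m \ge 0} \frac{1}{m!}\cD^m(X)$ is a finite sum, hence a well-defined element of $\Cobar(P^{\ds})$ (the characteristic-zero hypothesis makes the coefficients $\frac{1}{m!}$ legitimate). Next, because $\cD$ is a derivation, the usual formal identity $\exp(\cD)(a \circ_i b) = \exp(\cD)(a) \circ_i \exp(\cD)(b)$ holds; the only point to check is that the formal manipulation is licit, which it is since on any fixed pair $a, b$ only finitely many terms are nonzero by local nilpotence. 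Likewise $\exp(\cD)$ is equivariant for the symmetric group actions because $\cD$ is. Thus $\exp(\cD)$ is an endomorphism of the operad $\Cobar(P^{\ds})$ in graded vector spaces, and its inverse is $\exp(-\cD)$, again a finite sum on each element, so $\exp(\cD)$ is an automorphism of graded operads.

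It remains to see that $\exp(\cD)$ commutes with the full differential $\pa$ on $\Cobar(P^{\ds})$. Here I would invoke the hypothesis that $\cD$ is $\pa$-closed, i.e. $[\pa, \cD] = 0$ in $\Der\big(\Cobar(P^{\ds})\big)$ (this uses \eqref{diff-weight-Der} to make sense of $[\pa, \cD] \in \Der^{(t-1)}$ and the assumption that it vanishes). Since $\pa$ and $\cD$ commute as operators, $\pa$ commutes with every power $\cD^m$, and therefore with the locally finite sum $\exp(\cD)$; concretely, applied to a fixed $X$, both $\pa \exp(\cD)(X)$ and $\exp(\cD)\pa(X)$ are finite sums of the equal terms $\frac{1}{m!}\pa\cD^m(X) = \frac{1}{m!}\cD^m\pa(X)$. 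Hence $\exp(\cD)$ is a morphism of dg operads, and being invertible with dg-operad inverse $\exp(-\cD)$, it is an automorphism of the dg operad $\Cobar(P^{\ds})$.

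The only genuinely delicate point is the well-definedness, i.e. making sure the infinite series truncates; but that is exactly the content of Claim \ref{cl:loc-nilpot} together with the observation $\Cobar(P^{\ds})^{(r)} = \bfzero$ for $r < 0$, so on generators $X \in \bs\cG^k P$ we have $\cD^m(X) \in \Cobar(P^{\ds})^{((k-1)+mt)}$, which vanishes once $m > (k-1)/|t|$, and then one extends to all operadic monomials by the Leibniz rule. Everything else — compatibility with operadic composition, $S_n$-equivariance, and commutation with $\pa$ — follows formally from $\cD$ being a $\pa$-closed derivation, once the finiteness is in hand. So I do not anticipate any real obstacle beyond carefully citing the preceding claims.
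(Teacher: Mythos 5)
Your proof is correct and follows essentially the same route as the paper: Claim \ref{cl:loc-nilpot} gives well-definedness of the exponential series on the free graded operad $\Op(\bs P)$, the derivation property of $\cD$ yields compatibility with the operadic compositions and symmetric group actions (with $\exp(-\cD)$ as inverse), and $\pa$-closedness gives commutation with the differential. The paper's own proof is terser and leaves these verifications implicit, but there is no substantive difference.
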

\begin{proofOld}
Claim \ref{cl:loc-nilpot} implies that the formal Taylor series
$$
\exp(\cD) : = \id + \sum_{k \ge 1} \frac{1}{k!} \cD^k  
$$
is a well defined automorphism of the graded operad $\Op({\bf s} P)$. 

Since $\cD$ is $\pa$-closed, this automorphism is also 
compatible with the differential on $\Cobar(P^{\ds})$. 
\end{proofOld}

Let us now prove the following technical statement:
\begin{prop}
\label{prop:psi-lift}
Let $m$ be an integer $\ge 2$ and 
$$
\psi \in \Hom_{\Coll} (\cG^m P, \cH) \in  \Conv(P,  \cH)
$$
be a degree $1$ vector satisfying 
\begin{equation}
\label{psi-closed}
\psi \circ \pa_{P} + [\al_{\cH}, \psi] =0. 
\end{equation}
Then there exists a degree $0$ $\pa$-closed derivation 
$\cD \in  \Der^{(1-m)} \subset \Der \big(\Cobar(P^{\ds}) \big)$ such that
\begin{equation}
\label{cD-psi}
\rho \circ \cD \circ \bs\, \big|_{P}  = \psi  
\end{equation}
and
\begin{equation}
\label{cD-less-m}
\cD (\bs X) = 0 \qquad \forall ~~ X \in \cG^{< m} P.
\end{equation}
\end{prop}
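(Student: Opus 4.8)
The goal is to produce a $\pa$-closed degree $0$ derivation $\cD \in \Der^{(1-m)}$ realizing $\psi$ at the level of cohomology via $\rho$ and vanishing on generators of $\cG$-degree $< m$. The natural place to look is the cochain complex $\Conv(P, \Cobar(P^{\ds}))$ equipped with the differential $\pa + \pa_P + [\al_{\id}, ~]$, which under the isomorphism \eqref{Der-Conv} corresponds to $\Der\big(\Cobar(P^{\ds})\big)$ with the differential $[\pa + \pa_P, ~] = [\pa_{\mathrm{tot}}, ~]$; a derivation is $\pa$-closed precisely when the corresponding convolution element is a cocycle. Note also that, by \eqref{brack-with-wghts} and \eqref{diff-weight-Der}, each graded piece $\cL^{(t)}$ together with the appropriate shifts of $\pa$ and $[\al_{\id},~]$ forms a subcomplex in a suitable sense, so the construction can be carried out weight by weight. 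The condition \eqref{cD-less-m} says we want a cocycle supported in $\cG$-degrees $\ge m$, and \eqref{cD-psi} pins down its image under $\rho$ in the lowest surviving weight.

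First I would lift $\psi$ itself. Since $\rho\big|_{\Cobar(P^{\ds})^{(0)}}$ is onto $\cH$ (see \eqref{rho-surj}) and $\cG^m P$ is finite dimensional, I can pick a degree $1$ map of collections $\cD_0 : \bs \cG^m P \to \Cobar(P^{\ds})^{(0)}$ with $\rho \circ \cD_0 = \psi$ on $\cG^m P$, and extend by zero on $\cG^{\neq m} P$; this gives a first approximation $\cD \in \Der^{(1-m)}$ satisfying \eqref{cD-psi} and \eqref{cD-less-m}, but not yet $\pa$-closed. The defect $\delta := [\pa_{\mathrm{tot}}, \cD] = \cD \circ \pa_P + [\al_{\id}, \cD] + [\pa, \cD]$, viewed in $\Conv(P, \Cobar(P^{\ds}))$, is a degree $1$ cocycle (since $[\pa_{\mathrm{tot}},~]$ squares to zero); moreover, using \eqref{diff-weight-Der} and the grading \eqref{Conv-P-Cobar-grading}, $\delta$ lives in weights $\le 1-m$, and its projection to the top weight $1-m$ vanishes when composed with $\rho$ — precisely because \eqref{psi-closed} says $\psi$ is a cocycle in $\Conv(P, \cH)$ and $\rho$ intertwines the differentials. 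So the leading weight-$(1-m)$ component of $\delta$ is a $\pa$-exact coboundary by Claim \ref{cl:h-q-exist} (using $h_0$ on $\cG^{m+1}P$ where the relevant monomials land in $\ker\rho$, and $h_q$ in higher $\cG$-degrees), and subtracting the corresponding correction from $\cD$ reduces $\delta$ to strictly lower weight without disturbing \eqref{cD-psi} or \eqref{cD-less-m}.

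Then I would iterate: at each stage the remaining defect $\delta$ is a $[\pa_{\mathrm{tot}},~]$-cocycle concentrated in weights $\le t_0$ for some $t_0 < 1-m$, its top component $\delta^{(t_0)}$ is therefore $\pa$-closed (weight-lowering of $\pa$ forces the weight-$(t_0+1)$ part of $[\pa_{\mathrm{tot}},\delta]$, which is $[\pa,\delta^{(t_0)}]$, to vanish), and by Claim \ref{cl:h-q-exist} it is killed by a weight-$(t_0+1)$ correction $\cD'$ obtained by applying the $h_q$'s arity-by-arity. Replacing $\cD \rightsquigarrow \cD + \cD'$ (or rather $\cD - \bs^{-1} \circ (h\text{-correction})$ in the derivation picture) strictly raises the minimal weight of the defect. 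Since, for fixed $X \in \bs\cG^k P$, the image $\cD(X)$ lives in $\Cobar(P^{\ds})^{((k-1)+t)}$ and this is zero once $t$ is sufficiently negative relative to $k$, the successive corrections converge in the appropriate completion of $\Der^{(\le 1-m)}$ and yield a genuine $\pa$-closed derivation $\cD$; the two normalizations \eqref{cD-psi} and \eqref{cD-less-m} are preserved throughout because every correction we add lives in weights strictly below $1-m$ and in $\cG$-degrees $\ge m+1$. The main obstacle I expect is bookkeeping the convergence and the interaction of the two gradings (the $\cG$-degree on $P$ and the weight on $\Cobar(P^{\ds})$) carefully enough to see that the obstruction at each step really does lie in the domain of the appropriate $h_q$ — i.e. that after subtracting the previous corrections the relevant monomials are genuine cocycles (or land in $\ker\rho$) weightwise — and that the infinite sequence of corrections is locally finite on each generator.
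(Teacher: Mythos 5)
Your plan — lift $\psi$ to a first approximation with image in $\Cobar(P^{\ds})^{(0)}$, observe that the defect is a cocycle for the total differential $\pa+\pa_P+[\al_{\id},\,\cdot\,]$ and is killed by $\rho$ in the first $\cG$-degree because of~\eqref{psi-closed}, and then repeatedly apply the contracting homotopies $h_q$ of Claim~\ref{cl:h-q-exist} to kill the defect — is precisely the strategy of the paper's proof. The two-stage use of the contraction (the $\ker\rho$ version $h_0$ for the first step, then $h_k$ for $k\ge1$) also matches.

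However, the bookkeeping you flag as the likely obstacle is in fact where the write-up goes wrong, and in a way that matters for the conclusion $\cD\in\Der^{(1-m)}$. All three pieces $\pa$, $\pa_P$, and $[\al_{\id},\,\cdot\,]$ of the total differential shift $\cL^{(t)}\to\cL^{(t-1)}$ (for the bracket: two factors of $X$, each in $\cG^{\ge1}$, means $[\al_{\id},f]$ takes $\cG^qP$ into $\Cobar(P^{\ds})^{((q-2)+0+t_f)}$, i.e.\ drops $t$ by one). Consequently the defect of your first approximation $\cD_0\in\Der^{(1-m)}$ lies entirely in $\Der^{(-m)}$, not ``in weights $\le 1-m$ with top part $1-m$.'' Since $\pa_{\mathrm{tot}}$ is \emph{homogeneous} of $\cL$-weight $-1$, the ``weight-lowering of $\pa$'' argument you give for the closedness of a top-weight component is vacuous — there is no weight decomposition to exploit. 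The grading that actually makes this work is the $\cG$-degree grading on $P$: if $\delta$ is $\pa_{\mathrm{tot}}$-closed and vanishes on $\cG^{<q_0}P$, then on $\cG^{q_0}P$ the contributions of $\delta\circ\pa_P$ and $[\al_{\id},\delta]$ die (both land you in $\cG^{<q_0}$ where $\delta=0$), so $\pa\delta|_{\cG^{q_0}}=0$; this is the clean replacement for the paper's explicit computation using the MC equation for $\al_{\id}$. Correspondingly, the iteration proceeds degree-by-degree: the defect stays in $\Der^{(-m)}$ throughout while its support in $\cG$-degree climbs, and every correction $\Psi_{m+k+1}:\cG^{m+k+1}P\to\Cobar(P^{\ds})^{(k+1)}$ lives at the \emph{same} $\cL$-weight $1-m$. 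This is what guarantees $\cD\in\Der^{(1-m)}$ as the proposition demands. Your convergence argument (``$t$ becomes so negative that $\Cobar(P^{\ds})^{((k-1)+t)}=0$'') would only apply if the corrections drifted to lower weights, which they do not; the actual reason the infinite sum $\Psi=\sum_{j\ge m}\Psi_j$ is well defined is simply that $\Psi_j$ is supported on $\cG^jP$, so on any fixed generator only one term contributes. If you replace ``weight'' by ``$\cG$-degree'' in the iteration and drop the $\Der^{(\le 1-m)}$ completion, the proof is correct and essentially the paper's.
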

\begin{proofOld} Since 
$$
\rho \big|_{\Op(\bs \cG^1 P)} : \Op(\bs  \cG^1 P)  \to \cH
$$
is onto (and we work with fields of characteristic zero), there exists 
a degree $1$ vector 
$$
\Psi_m \in \Hom_{\Coll}\big(\cG^m P,   \Op(\bs \cG^1 P) \big) \subset  \Conv \big(P, \Cobar(P^{\ds}) \big)
$$ 
such that $\rho \circ \Psi_m(X) = \psi(X)$ for all $X \in \cG^m P$. 
Clearly, $\Psi_m \in \cL^{(1-m)}$ and $\Psi_m$ satisfies the equation 
$$
\pa \Psi_m (X) + \Psi_m (\pa_P X) + [\al_{\id}, \Psi_m] (X) = 0 \qquad \forall ~~ X \in \cG^{\le m} P.  
$$

Due to \eqref{psi-closed}, the map 
$$
\big( \Psi_m \circ \pa_P + [\al_{\id}, \Psi_m]  \big) \big|_{ \cG^{m+1} P  } :  \cG^{m+1} P \to  \Cobar(P^{\ds})^{(0)} 
$$
lands in the kernel of $\rho$. 

Hence, by Claim \ref{cl:h-q-exist}, the map 
$$
\Psi_{m+1} (Y) : = - h_0 \big( \Psi_m (\pa_P Y) + [\al_{\id}, \Psi_m] (Y)\big) ~:~   \cG^{m+1} P  \to  \Cobar(P^{\ds})^{(1)} 
$$
satisfies
$$
\pa \Psi_{m+1} (Y) + \Psi_m (\pa_P Y) + [\al_{\id}, \Psi_m] (Y) = 0 \qquad \forall ~~Y \in \cG^{(m+1)} P.  
$$ 

Therefore, the sum $\Psi^{(m+1)} = \Psi_m + \Psi_{m+1} $ satisfies the equation 
$$
\pa \Psi^{(m+1)} (Y) + \Psi^{(m+1)}  (\pa_P Y) + [\al_{\id}, \Psi^{(m+1)} ] (Y) = 0 \qquad \forall ~~Y \in \cG^{\le (m+1)} P.  
$$ 
Moreover, $\Psi^{(m+1)}$ belongs to $\cL^{(1-m)}$ by construction.   

Let us assume that we extended $\Psi^{(m+1)}$ to a vector (for some $k \ge 1$)
$$
\Psi^{(m+k)} = \Psi_m +  \Psi_{m+1}  + \dots +  \Psi_{m+k}\,, \qquad \Psi_j \in  \Hom_{\Coll}\big(\cG^j P,    \Cobar(P^{\ds})^{(j-m)}  \big)
$$
such that 
\begin{equation}
\label{Psi-mk-closed}
\pa \Psi^{(m+k)} (Y) + \Psi^{(m+k)}  (\pa_P Y) + [\al_{\id}, \Psi^{(m+k)} ] (Y) = 0 \qquad \forall ~~Y \in \cG^{\le (m+k)} P.  
\end{equation}

Let $X \in \cG^{m+k+1} P$. Using \eqref{Psi-mk-closed} and the MC equation 
$$
\pa \circ \al_{\id} + \al_{\id}\circ \pa_P + \frac{1}{2} [\al_{\id}, \al_{\id}] = 0
$$
for $\al_{\id}$, we deduce that 
$$
\pa \big( \Psi^{(m+k)}  (\pa_P X) + [\al_{\id}, \Psi^{(m+k)} ] (X) \big)  =  
 - [\al_{\id}, \Psi^{(m+k)} ] (\pa_P X)  + \pa \big( [\al_{\id}, \Psi^{(m+k)} ] (X) \big)
$$
$$
= [\pa \circ \al_{\id} + \al_{\id} \circ \pa_P\,, \, \Psi^{(m+k)} ] (X) - 
[\al_{\id}\,,\, \pa \circ  \Psi^{(m+k)} +  \Psi^{(m+k)} \circ \pa_P ] (X)  
$$
$$
= \big( [\al_{\id}, [\al_{\id},   \Psi^{(m+k)} ]] - \frac{1}{2} [[\al_{\id}, \al_{\id}],   \Psi^{(m+k)}] \big)(X) = 0. 
$$
In other words, the map
$$
\big( \Psi^{(m+k)} \circ \pa_P + [\al_{\id}, \Psi^{(m+k)} ]  \big) \big|_{ \cG^{m+k+1} P} ~:~   \cG^{m+k+1} P 
~\to~  \Cobar(P^{\ds})^{(k)}
$$
lands in $\cZ(\Cobar(P^{\ds})^{(k)})$. 

Hence, by Claim \ref{cl:h-q-exist}, the map 
$$
\Psi_{m+k+1} (X) : = - h_k \big( \Psi^{(m+k)} (\pa_P X) + [\al_{\id}, \Psi^{(m+k)}] (X)\big) ~:~   \cG^{m+k+1} P  \to  \Cobar(P^{\ds})^{(k+1)} 
$$
satisfies the equation 
\begin{equation}
\label{next}
\pa \Psi_{m+k+1} (X) + \Psi^{(m+k)}  (\pa_P X) + [\al_{\id}, \Psi^{(m+k)} ] (X) = 0 \qquad \forall ~~X \in   \cG^{m+k+1} P. 
\end{equation}
Therefore the vector 
$$
 \Psi^{(m+k+1)}  : = \Psi^{(m+k)} +  \Psi_{m+k+1} =  \Psi_{m} + \Psi_{m+1} + \dots +  \Psi_{m+k+1}
$$
satisfies the equation 
\begin{equation}
\label{Psi-mk1-closed}
\pa \Psi^{(m+k+1)} (X) + \Psi^{(m+k+1)}  (\pa_P X) + [\al_{\id},  \Psi^{(m+k+1)} ] (X) = 0 \qquad \forall ~~X \in   \cG^{\le (m+k+1)} P. 
\end{equation}
Moreover, since $ \Psi_{m+k+1} \in \cL^{(1-m)}$, the vector $\Psi^{(m+k+1)}$ also belongs to $\cL^{(1-m)}$. 

This inductive argument shows that there exists a degree $1$ vector 
$$
\Psi  = \sum_{j=m}^{\infty} \Psi_j\,, \qquad \Psi_j \in  \Hom_{\Coll}\big(\cG^j P,    \Cobar(P^{\ds})^{(j-m)}  \big)
$$
such that 
\begin{equation}
\label{Psi-closed}
\pa \circ \Psi + \Psi \circ \pa_P + [\al_{\id}, \Psi] = 0
\end{equation}
and 
\begin{equation}
\label{rho-Psi-m}
\rho \circ \Psi_m = \psi. 
\end{equation}

Since $\rho(Z) = 0$ for every $Z \in  \Cobar(P^{\ds})^{(t)} $ if $t \ge 1$,  equation \eqref{rho-Psi-m}
implies that 
\begin{equation}
\label{rho-Psi}
\rho \circ \Psi = \psi. 
\end{equation}

Equation \eqref{Psi-closed} implies that the (degree $0$) derivation 
$$
\cD \in \Der^{(1-m)} \subset \Der\big( \Cobar(P^{\ds}) \big)
$$ 
corresponding to $\Psi$ is $\pa$-closed. Furthermore, equation \eqref{rho-Psi} implies \eqref{cD-psi}. 
Finally, equation \eqref{cD-less-m} is a consequence of 
$$
\Psi  \big|_{\cG^{< m } P} ~ = ~ 0. 
$$
\end{proofOld}

\subsection{The proof of Proposition \ref{prop:the-step}}
\label{sec:proof}
We will now use Proposition \ref{prop:psi-lift} to prove Proposition \ref{prop:the-step}.

Since $\al$ is an $n$-th MC-sprout and $\beta$ is a genuine MC element of $\Conv(P, \cO \otimes \bbK)$, 
we have
\begin{equation}
\label{curv-beta}
\pa_{\cO} \circ \beta_m + \beta_{m-1} \circ \pa_P + \sum_{k=1}^{m-1} \beta_k \bullet \beta_{m-k} = 0,
\end{equation}
\begin{equation}
\label{curv-al}
\pa_{\cO} \circ \al_m + \al_{m-1} \circ \pa_P + \sum_{k=1}^{m-1} \al_k \bullet \al_{m-k} = 0,
\end{equation}
\begin{equation}
\label{curv-beta-next}
\pa_{\cO} \circ \beta_{m+1} + \beta_{m} \circ \pa_P + [\beta_1, \beta_m]  +  \sum_{k=2}^{m-1} \beta_k \bullet \beta_{m+1-k} = 0,
\end{equation}
and
\begin{equation}
\label{curv-al-next}
\pa_{\cO} \circ \al_{m+1} + \al_{m} \circ \pa_P + [\al_1, \al_m]  +  \sum_{k=2}^{m-1} \al_k \bullet \al_{m+1-k} = 0.
\end{equation}

Subtracting \eqref{curv-al} from \eqref{curv-beta} and using \eqref{k-less-than-m}, we get
$$
\pa_{\cO} \circ (\beta_m - \al_m) = 0.
$$
In other, words $\beta_m - \al_m$ is a map from $\cG^m P $ to $\cZ(\cO  \otimes \bbK)$. 

Let 
\begin{equation}
\label{psi-m}
\psi_m : = \pi_{\cH} \circ (\beta_m - \al_m) \in \Conv(P, \cH \otimes \bbK).
\end{equation}

Subtracting \eqref{curv-al-next} from \eqref{curv-beta-next} and using \eqref{k-less-than-m} again, we get
\begin{equation}
\label{m-plus-1}
(\beta_{m} - \al_m) \circ \pa_P + [\al_1, \beta_m- \al_m] = -\pa_{\cO} \circ (\beta_{m+1}  - \al_{m+1}).
\end{equation}

Next, we observe that both sides of \eqref{m-plus-1} are maps which land in $\cZ(\cO \otimes \bbK)$. So applying 
$\pi_{\cH}$ to both sides of  \eqref{m-plus-1} and using $\pi_{\cH} \circ \al_1 = \al^{\cH}$, we deduce that 
$$
\psi_m \circ \pa_P +  [\al_{\cH}, \psi_m] = 0. 
$$ 
In other words, $\psi_m$ is a cocycle in the cochain complex
$$
\Conv(P, \cH \otimes \bbK)
$$
with the differential $\pa_P + [\al_{\cH}, ~]$. 

Due to Proposition \ref{prop:psi-lift}, there exists a $\pa$-closed degree zero derivation 
$$
\cD \in \Der^{(1-m)} \subset \Der \big( \Cobar(P^{\ds}) \otimes \bbK \big) 
$$
such that 
\begin{equation}
\label{cD-psi-m}
\rho \circ \cD \circ \bs\, \big|_{P}  = \psi_m  
\end{equation}
and
\begin{equation}
\label{cD-less-m-here}
\cD (\bs X) = 0 \qquad \forall ~~ X \in \cG^{< m} P.
\end{equation}

Thanks to Claim \ref{cl:exp-OK}, $-\cD$ can be exponentiated to the automorphism 
$\exp(-\cD)$ of the dg operad $\Cobar(P^{\ds}) \otimes \bbK$.

Let $F_{\beta}$ be the quasi-isomorphism of dg operads $ \Cobar(P^{\ds}) \otimes \bbK \to \cO \otimes \bbK$ corresponding 
to the MC element $\beta$.  Due to \eqref{cD-less-m-here}, the quasi-isomorphism 
$$
F : = F_{\beta} \circ \exp(-\cD) ~:~  \Cobar(P^{\ds}) \otimes \bbK \to \cO \otimes \bbK
$$
satisfies 
$$
F_{\beta} \circ \exp(-\cD)(\bs X) = F_{\beta}(\bs X) \qquad \forall ~~ X \in \cG^{< m} P. 
$$
Furthermore,
$$
F_{\beta} \circ \exp(-\cD)(\bs X) -  F_{\beta}(\bs X) \in \cZ(\cO \otimes \bbK) \qquad \forall~~ X \in \cG^{m} P.
$$

Using equations $\pi_{\cH} \circ \beta_1 = \al^{\cH}$ and \eqref{cD-psi-m}, we deduce that 
$$
\pi_{\cH} \big( F_{\beta} \circ \exp(-\cD)(\bs X) -  F_{\beta}(\bs X) \big) = - \psi_m (X).
$$

Thus the MC element 
$$
\beta^{\dia} =  \sum_{k=1}^{\infty} \beta^{\dia}_k  \qquad \beta^{\dia}_k \in \Hom_{\Coll}(\cG^kP, \cO \otimes \bbK) 
$$ 
corresponding to $F$ has these properties:
$$
\beta^{\dia}_k  = \beta_k (= \al_k) \qquad \forall~~  k < m, 
$$
$$
(\beta^{\dia}_m - \beta_m) (X) \in \cZ(\cO) \qquad \forall~~ X \in \cG^m P
$$
and
$$
\pi_{\cH} \circ (\beta^{\dia}_m - \beta_m) (X)  =  - \psi_m \qquad \forall~~ X \in \cG^m P
$$ 
or equivalently 
\begin{equation}
\label{beta-dia-al}
\pi_{\cH} \circ (\beta^{\dia}_m - \al_m) (X)  = 0 \qquad \forall~~ X \in \cG^m P. 
\end{equation}

Hence, using the splitting \eqref{ti-ms}, we define the following degree $0$ vector 
$$
\xi \in  \Hom_{\Coll}(\cG^m  P, \cO \otimes \bbK) 
$$
\begin{equation}
\label{xi-dfn}
\xi(X) : = \ti{\ms} \circ (\beta^{\dia}_m - \al_m) (X) \qquad X \in \cG^m P,
\end{equation}
which satisfies 
\begin{equation}
\label{xi-beta-al}
\beta^{\dia}_m(X) = \al_m (X) + \pa_{\cO} \circ  \xi (X).  
\end{equation}

The desired MC element $\ti{\beta}$ is defined by the formula
$$
\ti{\beta} =  \exp([\xi, ~]) \beta^{\dia}  ~  - ~ \frac{\exp([\xi, ~]) - 1}{[\xi, ~]} \, \pa \xi.
$$

Indeed, since $\xi(X) = 0$ for all $X \in \cG^{< m} P$,
$$
\ti{\beta}_k = \beta_k = \al_k \qquad \forall~~ k < m.  
$$   
Moreover, equation \eqref{xi-beta-al} implies that
$$
\ti{\beta}_m = \al_m\,.
$$

Thus Proposition \ref{prop:the-step} is proved. $\qed$

\appendix

\section{The lifting property for cobar resolutions}
\label{app:lift}
Let us recall that the functor $\Conv(P, ?)$ preserves quasi-isomorphisms:
\begin{prop}
\label{prop:Conv-q-iso}
If $P$ is a dg pseudo-operad satisfying Condition \ref{cond:P-filtered} and 
$f : \cA \to \cB$ is a quasi-isomorphism of dg operads, then the restriction of 
$$
f_* : \Conv(P, \cA) \to \Conv(P, \cB)
$$
to $\cF_m \Conv(P, \cA)$ is a quasi-isomorphism of dg Lie algebras 
\begin{equation}
\label{f-for-cF-m}
f_* \big|_{\cF_m \Conv(P, \cA)} ~:~ \cF_m \Conv(P, \cA) ~\to~ \cF_m \Conv(P, \cB)
\end{equation}
for every $m \ge 1$. 
\end{prop}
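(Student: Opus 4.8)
The plan is to run a complete-filtration (spectral-sequence) comparison for the descending filtration $\cF_\bullet\Conv(P,-)$ of \eqref{filtr-Conv}. The crucial structural fact is that this filtration is \emph{complete} and Hausdorff: the cocompleteness \eqref{cocomplete} of $P$ gives $P=\colim_m\cF^m P$, hence $\Conv(P,\cA)=\Hom_{\Coll}(\colim_m\cF^m P,\cA)=\varprojlim_m\Conv(P,\cA)/\cF_m\Conv(P,\cA)$ and $\bigcap_m\cF_m\Conv(P,\cA)=0$; moreover $\cF_1\Conv(P,\cA)=\Conv(P,\cA)$ because $\cF^0P=\bfzero$. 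The same holds for the induced filtration on the subcomplex $\cF_m\Conv(P,\cA)$. The map $f_*$ is visibly filtered, and since $f$ is an operad morphism it commutes with the infinitesimal compositions defining the pre-Lie product, so $f_*$ is a morphism of dg Lie algebras. Thus it suffices to prove that $f_*$ is a quasi-isomorphism after passing to the associated graded.

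First I would identify the associated graded complex. The differential on $\Conv(P,\cA)$ sends $g$ to $\pa_\cA\circ g\pm g\circ\pa_P$. If $g\in\cF_m\Conv(P,\cA)$ then $g\circ\pa_P$ lies in $\cF_{m+1}\Conv(P,\cA)$: indeed $\pa_P(\cF^mP)\subset\cF^{m-1}P$ by \eqref{diff-P-filtr}, and $g$ vanishes on $\cF^{m-1}P$. Hence the differential induced on
$$
\mathrm{gr}_m\Conv(P,\cA)\;:=\;\cF_m\Conv(P,\cA)/\cF_{m+1}\Conv(P,\cA)\;\cong\;\Hom_{\Coll}(\mathrm{gr}^mP,\cA),\qquad \mathrm{gr}^mP:=\cF^mP/\cF^{m-1}P,
$$
is simply $\bar g\mapsto\pa_\cA\circ\bar g$; here $\mathrm{gr}^mP$ is a collection of graded vector spaces, its induced internal differential being zero again by \eqref{diff-P-filtr}.

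The key step is then that $f_*$ induces a quasi-isomorphism $\Hom_{\Coll}(\mathrm{gr}^mP,\cA)\to\Hom_{\Coll}(\mathrm{gr}^mP,\cB)$ for every $m$. Since $\Hom_{\Coll}(Q,-)$ is the product, over all corollas $\bq$, of the functors $\Hom(Q(\bq),-)^{\Sigma_\bq}$ (with $\Sigma_\bq$ the relevant product of symmetric groups), and since over a field arbitrary products preserve quasi-isomorphisms while in characteristic zero $(-)^{\Sigma_\bq}$ is exact (a direct summand of the identity functor via averaging), it is enough to check that $\Hom_\bbQ(\mathrm{gr}^mP(\bq),\cA(\bq))\to\Hom_\bbQ(\mathrm{gr}^mP(\bq),\cB(\bq))$ is a quasi-isomorphism. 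But $\mathrm{gr}^mP(\bq)$ carries the zero differential and is a vector space over the ground field, so $\Hom_\bbQ(\mathrm{gr}^mP(\bq),-)$ is exact and therefore $H^\bullet\Hom_\bbQ(\mathrm{gr}^mP(\bq),\cA(\bq))=\Hom_\bbQ(\mathrm{gr}^mP(\bq),H^\bullet\cA(\bq))$; since the arity-$\bq$ component of $f$ is a quasi-isomorphism, the induced map is an isomorphism. (The argument is insensitive to whether the ground field is $\bbQ$ or $\bbK$.)

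Finally I would invoke the complete-filtration comparison theorem: by induction on $m$ and the five lemma applied to $0\to\mathrm{gr}_m\to\Conv(P,\cA)/\cF_{m+1}\to\Conv(P,\cA)/\cF_m\to0$, the maps $\Conv(P,\cA)/\cF_m\to\Conv(P,\cB)/\cF_m$ are quasi-isomorphisms for all $m$; then, passing to the (surjective, hence ${\varprojlim}^{1}$-acyclic) towers of quotients and using the Milnor exact sequence, one concludes that $f_*\colon\Conv(P,\cA)\to\Conv(P,\cB)$ is a quasi-isomorphism. Running the identical argument for the subcomplex $\cF_m\Conv(P,\cA)$, whose associated graded is $\bigoplus_{k\ge m}\mathrm{gr}_k$, yields \eqref{f-for-cF-m}. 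I expect the only delicate point to be this last one: one must be sure the decreasing filtration is genuinely complete — not merely exhaustive — so that no ${\varprojlim}^{1}$ obstruction survives, and this is precisely what the cocompleteness clause \eqref{cocomplete} of Condition \ref{cond:P-filtered} guarantees.
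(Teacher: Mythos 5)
Your proposal is correct and follows the same overall strategy as the paper: pass to the associated graded of the descending filtration \eqref{filtr-Conv}, show $f_*$ is a quasi-isomorphism there (using that, by \eqref{diff-P-filtr}, the associated-graded differential is the one induced solely by $\pa_\cA$, $\pa_\cB$), and then conclude by completeness of the filtration.

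The only real variation is in how the associated-graded comparison is argued. The paper constructs, corolla by corolla, a homotopy inverse $g$ and chain homotopies $\chi_\cA,\chi_\cB$ for $f$, averages them over the relevant products of symmetric groups to get $S$-equivariant maps of collections, and observes that this homotopy data post-composed with $\Hom_{\Coll}(\cdot,\cdot)$ descends to the associated graded. You instead identify $\mathrm{gr}_m\Conv(P,\cdot)$ with $\Hom_{\Coll}(\mathrm{gr}^mP,\cdot)$ and argue functorially: $\Hom$ out of a fixed vector space is exact, arbitrary products are exact, and taking $S$-invariants is exact in characteristic zero. These two arguments use exactly the same ingredients (working over a field; characteristic zero to make averaging available), so the difference is mainly expository: yours avoids writing down the homotopies explicitly, while the paper's exhibits the homotopy data. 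Your final step (Milnor $\varprojlim^1$ sequence for the surjective tower of quotients) is precisely what Lemma D.1 of \cite{DeligneTW}, cited by the paper, packages, so the conclusion is the same.

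One tiny remark: the exactness of products that you use holds over any commutative ring, not just over a field, so invoking ``over a field'' there is harmless but unnecessary; the places where the field hypothesis and characteristic zero genuinely enter are the splitting of $\cF^{m-1}P\hookrightarrow\cF^mP$ and the exactness of $(-)^{\Sigma_\bq}$.
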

\begin{proofOld}
This statement was proved in \cite[Section 4.4]{notes} for non-colored 
operads under the assumption that $P$ has the zero differential. 
Here we will give the proof of the more general statement. 

Since $f_*$ is compatible with the Lie brackets, we may forget about the Lie brackets
on $\cF_m \Conv(P, \cA)$ and $\cF_m \Conv(P, \cB)$ and treat both the source and the 
target of \eqref{f-for-cF-m} as cochain complexes with the differentials coming from those 
on $P$, $\cA$, and $\cB$. 

Since we deal with cochain complexes of vector spaces, there exists degree zero maps 
\begin{equation}
\label{ti-g-bq}
\ti{g}_{\bq} : \cB(\bq) \to \cA(\bq)
\end{equation}
and degree $-1$ maps 
\begin{equation}
\label{ti-chi-bq-cA}
\ti{\chi}_{\bq, \cA} : \cA(\bq) \to \cA(\bq),
\end{equation}
\begin{equation}
\label{ti-chi-bq-cB}
\ti{\chi}_{\bq, \cB} : \cB(\bq) \to \cB(\bq),
\end{equation}
such that 
\begin{equation}
\label{homotopy-cB}
f \circ \ti{g}_{\bq} (v)  = v + \pa_{\cB} \circ \ti{\chi}_{\bq, \cB} (v) + \ti{\chi}_{\bq, \cB} \circ \pa_{\cB} (v), 
\qquad \forall~~ v \in \cB(\bq)
\end{equation}
and
\begin{equation}
\label{homotopy-cA}
\ti{g}_{\bq} \circ f (w)  = w + \pa_{\cA} \circ \ti{\chi}_{\bq, \cA} (w) + \ti{\chi}_{\bq, \cA} \circ \pa_{\cA} (w), 
\qquad \forall~~ w \in \cA(\bq),
\end{equation}
where $\bq$ is any $\Xi$-colored corolla and $\pa_{\cA}$ (resp. $\pa_{\cB}$) is the differential 
on $\cA$ (resp. on $\cB$).

Moreover, since our base field has characteristic zero, we can use the 
standard averaging operators (for products of symmetric groups) and turn 
the maps \eqref{ti-g-bq}, \eqref{ti-chi-bq-cA}, and \eqref{ti-chi-bq-cB} into maps
of collections
\begin{equation}
\label{g-cB-cA}
g : \cB \to \cA, \qquad 
\chi_{\cA} : \cA \to \cA, \qquad 
\chi_{\cB} : \cB \to \cB, 
\end{equation}
satisfying
\begin{equation}
\label{homotopy}
g \circ f = \id_{\cA} + \pa_{\cA}  \circ \chi_{\cA}  + \chi_{\cA} \circ \pa_{\cA}, 
\qquad 
f \circ g = \id_{\cB} + \pa_{\cB}  \circ \chi_{\cB}  + \chi_{\cB} \circ \pa_{\cB}.
\end{equation}

Inclusion \eqref{diff-P-filtr} guarantees that if $f \in \cF_m \Conv(P, \cO)$ 
(for any dg operad $\cO$) then 
$$
f \circ \pa_P \in \cF_{m+1} \Conv(P, \cO).
$$
Hence the differential on the associated graded complex 
\begin{equation}
\label{Gr-Conv}
\bigoplus_{k \ge m} \cF_k \Conv(P, \cO) \big/ \cF_{k+1} \Conv(P, \cO)
\end{equation}
comes solely from the differential $\pa_{\cO}$ on $\cO$.

Therefore, equations in \eqref{homotopy} imply that the map \eqref{f-for-cF-m}
induces a quasi-isomorphism for the associated graded complexes: 
$$
\bigoplus_{k \ge m} \cF_k \Conv(P, \cA) \big/ \cF_{k+1} \Conv(P, \cA) ~\stackrel{\sim}{\longrightarrow}~
\bigoplus_{k \ge m} \cF_k \Conv(P, \cB) \big/ \cF_{k+1} \Conv(P, \cB). 
$$ 

Thus, since $\cF_m \Conv(P, \cA)$ (resp. $\cF_m \Conv(P, \cB)$) is complete with respect to 
the filtration 
$\cF_m \Conv(P, \cA) \supset \cF_{m+1} \Conv(P, \cA) \supset \dots $
(resp. $\cF_m \Conv(P, \cB) \supset \cF_{m+1} \Conv(P, \cB) \supset \dots $), the map 
\eqref{f-for-cF-m} is indeed a quasi-isomorphism. (See, for example, Lemma D.1 from 
\cite{DeligneTW}).  
\end{proofOld}

Proposition \ref{prop:Conv-q-iso} has the following corollaries:
\begin{cor}
\label{cor:lift}
Let $\Psi: \cA \to \cB$ be a quasi-isomorphism of
dg operads and $P$ be a dg pseudo-operad satisfying 
Condition \ref{cond:P-filtered}. Then for every operad map 
$R_{\cB} : \Cobar(P^{\ds}) \to \cB$ there exists an operad 
map $R_{\cA} : \Cobar(P^{\ds}) \to \cA$ such that the diagram 
\begin{equation}
\label{Cobar-cA-cB}
\begin{tikzpicture}
\matrix (m) [matrix of math nodes, row sep=2em, column sep=2em]
{~~~ & \Cobar(P^{\ds})  ~ \\
 \cA &  \cB\\ };
\path[->, font=\scriptsize]
(m-1-2) edge node[right] {$R_{\cB}$} (m-2-2)  
(m-2-1) edge  node[auto] {$\Psi$} (m-2-2);
\path[->, dashed, font=\scriptsize] 
(m-1-2) edge node[above] {$R_{\cA}~~$} (m-2-1);
\end{tikzpicture}
\end{equation}
commutes up to homotopy. 
Moreover, if $\ti{R}_{\cA}$ is another operad map $\Cobar(P^{\ds}) \to \cA$
for which $\Psi \circ \ti{R}_{\cA}$ is homotopy equivalent 
to $R_{\cB}$ then  $\ti{R}_{\cA}$ is homotopy equivalent to $R_{\cA}$.
\end{cor}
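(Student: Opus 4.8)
The plan is to pass to Maurer--Cartan elements and gauge equivalence, where the statement becomes an instance of the Goldman--Millson theorem. Recall from \cite[Proposition 5.2]{notes} that operad maps $\Cobar(P^{\ds}) \to \cO$ are in bijection with MC elements of $\Conv(P, \cO)$, the map attached to $\al$ being $F_{\al}$, and that under this bijection $\Psi \circ F_{\al} = F_{\Psi_*\al}$, where $\Psi_* : \Conv(P, \cA) \to \Conv(P, \cB)$ is the morphism of dg Lie algebras induced by $\Psi$. Since $\Cobar(P^{\ds})$ is cofibrant (Remark \ref{rem:Sul-alg}), I would first record the standard fact that two maps $F_{\al}, F_{\ti{\al}}$ are homotopic precisely when $\al$ and $\ti{\al}$ are isomorphic objects of the Deligne groupoid of $\Conv(P, \cO)$, i.e. related by a gauge transformation of the form \eqref{al-isom-ti-al}. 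This turns the homotopy assertions of the corollary into assertions about Deligne groupoids.

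Next I would invoke Proposition \ref{prop:Conv-q-iso} with $m = 1$: since $\Conv(P, \cO) = \cF_1 \Conv(P, \cO)$, that proposition tells us $\Psi_*$ is a quasi-isomorphism of dg Lie algebras, and by construction both $\Conv(P, \cA)$ and $\Conv(P, \cB)$ are complete with respect to their canonical filtrations \eqref{filtr-Conv}. The Goldman--Millson theorem in its filtered/complete form (\cite{GMtheorem}, \cite{Getzler}, \cite{Hinich}) then guarantees that the induced functor on Deligne groupoids
$$
\MC\big(\Conv(P, \cA)\big) ~~\stackrel{\sim}{\longrightarrow}~~ \MC\big(\Conv(P, \cB)\big)
$$
is an equivalence of groupoids; in particular it is essentially surjective and fully faithful.

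With this in hand the corollary follows formally. For existence, let $\beta$ be the MC element with $F_{\beta} = R_{\cB}$; essential surjectivity produces an MC element $\al \in \Conv(P, \cA)$ together with a gauge transformation carrying $\Psi_*\al$ to $\beta$, so that $R_{\cA} := F_{\al}$ satisfies $\Psi \circ R_{\cA} = F_{\Psi_*\al}$, which is homotopic to $F_{\beta} = R_{\cB}$; hence \eqref{Cobar-cA-cB} commutes up to homotopy. For uniqueness, if $\ti{R}_{\cA} = F_{\ti{\al}}$ also has $\Psi \circ \ti{R}_{\cA}$ homotopic to $R_{\cB}$, then $\Psi_*\ti{\al}$ is gauge equivalent to $\beta$, hence to $\Psi_*\al$; full faithfulness forces $\ti{\al}$ to be gauge equivalent to $\al$, i.e. $\ti{R}_{\cA}$ is homotopic to $R_{\cA}$.

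The only genuinely non-formal point — and the place where I would slow down — is the dictionary between homotopies of operad morphisms out of the cofibrant operad $\Cobar(P^{\ds})$ and gauge equivalences of the corresponding MC elements; I would state and reference this carefully before applying the Goldman--Millson equivalence. As an alternative to citing Goldman--Millson one can argue directly by induction along the filtration \eqref{filtr-Conv}, at each stage using that $\Psi_*$ is a quasi-isomorphism on the associated graded (equivalently, the homotopy data $g$, $\chi_{\cA}$, $\chi_{\cB}$ built in the proof of Proposition \ref{prop:Conv-q-iso}) to lift and to correct by a gauge term; but the groupoid formulation is cleaner and makes the uniqueness clause transparent.
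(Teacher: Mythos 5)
Your proposal is correct and follows essentially the same route as the paper: both invoke Proposition \ref{prop:Conv-q-iso} to show $\Psi_*$ is a filtered quasi-isomorphism, then apply the Goldman--Millson theorem (\cite[Theorem 1.1]{GMtheorem}) to get the equivalence of Deligne groupoids, and finally translate gauge equivalence of MC elements into homotopy of operad maps via \cite[Theorem 5.6]{notes} — precisely the dictionary you flagged as the non-formal point that needs a careful reference.
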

\begin{cor}
\label{cor:zig-zag-shorter}
Let $\cO$ be a dg operad defined over a field $\bbK$ of characteristic zero
and $\Cobar(P^{\ds})$ be a cobar resolution of another dg operad 
$\wt{\cO}$, where 
$P$ is a dg pseudo-cooperad satisfying Condition \ref{cond:P-filtered}.
Then the existence of a zig-zag of quasi-isomorphisms (of dg operads) 
\begin{equation}
\label{zig-zag}
\cO \,\stackrel{\sim}{\leftarrow}\, \bullet  \,\stackrel{\sim}{\rightarrow}\, \bullet
\,\stackrel{\sim}{\leftarrow}\, \bullet ~ \dots ~ \bullet  \,\stackrel{\sim}{\rightarrow}\, \wt{\cO}
\end{equation}
is equivalent to the existence of a quasi-isomorphism of dg operads
\begin{equation}
\label{from-cobar}
F :  \Cobar(P^{\ds})  \,\stackrel{\sim}{\rightarrow}\,  \cO. 
\end{equation}
\end{cor}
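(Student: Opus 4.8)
The plan is to prove the two implications separately, with essentially all the work going into the direction that manufactures a single quasi-isomorphism out of a zig-zag. The easy implication is immediate: by hypothesis $\Cobar(P^{\ds})$ is a cobar resolution of $\wt{\cO}$, so there is a quasi-isomorphism $q : \Cobar(P^{\ds}) \stackrel{\sim}{\to} \wt{\cO}$; hence any quasi-isomorphism $F : \Cobar(P^{\ds}) \stackrel{\sim}{\to} \cO$ produces the length-two zig-zag $\cO \stackrel{\sim}{\leftarrow} \Cobar(P^{\ds}) \stackrel{\sim}{\to} \wt{\cO}$, which is of the form \eqref{zig-zag}.

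For the converse, I would write the given zig-zag \eqref{zig-zag} as a finite chain of dg operads $\cO = B_0, B_1, \dots, B_N = \wt{\cO}$ together with quasi-isomorphisms joining consecutive terms, the arrows pointing in alternating directions. Then I would argue by downward induction on $i$ that there exists a quasi-isomorphism $R_i : \Cobar(P^{\ds}) \stackrel{\sim}{\to} B_i$, the base case $i = N$ being the resolution map $q$. In the inductive step, given $R_{i+1} : \Cobar(P^{\ds}) \stackrel{\sim}{\to} B_{i+1}$, there are two cases depending on the direction of the arrow between $B_i$ and $B_{i+1}$. If it points from $B_{i+1}$ to $B_i$, I simply postcompose $R_{i+1}$ with that quasi-isomorphism and obtain $R_i$ as a composite of two quasi-isomorphisms. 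If it points from $B_i$ to $B_{i+1}$, say $\Psi : B_i \stackrel{\sim}{\to} B_{i+1}$, I invoke the homotopy lifting property Corollary \ref{cor:lift} (with $\cA = B_i$, $\cB = B_{i+1}$, $R_{\cB} = R_{i+1}$) to produce an operad map $R_i : \Cobar(P^{\ds}) \to B_i$ with $\Psi \circ R_i$ homotopic to $R_{i+1}$. Iterating $N$ times yields the desired $R_0 : \Cobar(P^{\ds}) \stackrel{\sim}{\to} \cO$. This is exactly the place where Condition \ref{cond:P-filtered} enters, through Proposition \ref{prop:Conv-q-iso} and Corollary \ref{cor:lift}.

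The only subtle point — and the closest thing to an obstacle — is that in the lifting case the triangle commutes only \emph{up to homotopy}, so I must check that $R_i$ is still a quasi-isomorphism. This is settled by the fact that homotopic morphisms of dg operads induce the same map on cohomology: from $\Psi \circ R_i \simeq R_{i+1}$ one gets $H^{\bul}(\Psi) \circ H^{\bul}(R_i) = H^{\bul}(R_{i+1})$, and since $H^{\bul}(\Psi)$ and $H^{\bul}(R_{i+1})$ are isomorphisms, so is $H^{\bul}(R_i)$. Everything else is a routine induction on the length of the zig-zag, with no computation involved; the substantive input has already been isolated in Corollary \ref{cor:lift}.
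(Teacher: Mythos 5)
Your proof is correct and follows exactly the route the paper sketches: inductively applying the lifting property from Corollary~\ref{cor:lift} along the zig-zag, postcomposing when the arrow already points the right way, and using the 2-out-of-3 property (together with the fact that homotopic maps agree on cohomology) to see that each lifted map remains a quasi-isomorphism. You have simply made explicit the induction that the paper dismisses as ``the obvious application.''
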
 
\begin{proof}[Corollary \ref{cor:lift}]
Due to Proposition \ref{prop:Conv-q-iso}, the map 
$$
\Psi_* : \Conv(P, \cA)  \,\stackrel{\sim}{\rightarrow}\, \Conv(P, \cB)
$$ 
induced by the quasi-isomorphism  $\Psi: \cA \to \cB$ is a quasi-isomorphism 
of filtered dg Lie algebras satisfying the necessary conditions of 
\cite[Theorem 1.1]{GMtheorem}. 

Therefore, there exists a MC element 
$$
\al_{\cA} \in \Conv(P, \cA) 
$$
for which $\Psi_*(\al_{\cA})$ is equivalent to the MC element $\al_{\cB} \in \Conv(P, \cB)$
corresponding to the operad map $R_{\cB}$. 

Hence \cite[Theorem 5.6]{notes} implies\footnote{This theorem is 
proved in \cite{notes} only for non-colored operads. 
However, the proof can be easily generalized to the case of colored operads.} 
that $R_{\cB}$ is homotopy equivalent 
to $\Psi \circ R_{\cA}$, 
where $R_{\cA}$ is the operad map $\Cobar(P^{\ds}) \to \cA$ corresponding to 
the MC element $\al_{\cA}$.

Let $\ti{R}_{\cA}$ be another operad map $\Cobar(P^{\ds}) \to \cA$
for which $\Psi \circ \ti{R}_{\cA}$ is homotopy equivalent 
to $R_{\cB}$ and $\ti{\al}_{\cA}$ be the MC element of $\Conv(P, \cA)$
corresponding to $\ti{R}_{\cA}$. 

Since $\Psi \circ \ti{R}_{\cA}$ is homotopy equivalent to $R_{\cB}$, the 
MC elements $\Psi_*(\ti{\al}_{\cA})$ and $\al_{\cB}$ are isomorphic. 
Hence, applying Theorem \cite[Theorem 1.1]{GMtheorem} once again, we 
conclude that $\ti{\al}_{\cA}$ is isomorphic to $\al_{\cA}$.  

Thus  $\ti{R}_{\cA}$ is homotopy equivalent to $R_{\cA}$. 
\end{proof}

\vspace{0.18cm}

\begin{proof}[Corollary \ref{cor:zig-zag-shorter}]
Let us denote by $\rho$ the quasi-isomorphism
\begin{equation}
\label{the-rho}
\rho :  \Cobar(P^{\ds})  \,\stackrel{\sim}{\rightarrow}\,  \wt{\cO}. 
\end{equation}

Given a quasi-isomorphism $F$ in \eqref{from-cobar}, we produce the zig-zag
of quasi-isomorphisms of dg operads 
$$
\cO \,\stackrel{\, F}{\longleftarrow}\, \Cobar(\cC) \,\stackrel{\rho}{\longrightarrow}\, \wt{\cO}.
$$

So the implication $\Leftarrow$ is obvious. 

To proof of the implication $\Rightarrow$ is based on the obvious application of 
the lifting property from Corollary \ref{cor:lift} and the 2-out-of-3 property 
for quasi-isomorphisms. 
\end{proof} 

\section{Tamarkin's $\Ger_{\infty}$-structure up to arity $4$}
\label{app:Tam-Arity4}

In \cite{Software}, we developed a software which implements the recursive construction of 
a quasi-isomorphism $\Ger_{\infty} \to \Br$ over rationals.

Let us recall that an $n$-th sprout in $\Conv(\Ger^{\vee}, \Br)$ is identified with a degree $1$ vector in 
$$
\al \in \bigoplus_{m=2}^{n+1} \big( \Br(m) \otimes \La^{-2} \Ger(m) \big)_{S_m}
$$
for which 
$$
\pa \al + \frac{1}{2}[\al, \al]  ~\in~  \big( \Br(n+2) \otimes \La^{-2} \Ger(n+2) \big)_{S_{n+2}} ~\oplus~ 
\big( \Br(n+3) \otimes \La^{-2} \Ger(n+3) \big)_{S_{n+3}} ~\oplus~ \dots  
$$
In other words, $n$-th MC-sprout $\al$ involves terms in arities $2,3,\dots, n+1$ and all terms of 
$\Curv(\al)$ have arities $\ge n+2$. 

Using this software, we found a $4$-th MC-sprout $\al$. This sprout has 1265 terms and the truncation 
$\al^{[3]}$ of $\al$ is shown in figures \ref{fig:Part1}, \ref{fig:Part11}, and \ref{fig:lastPart}.
Due to Corollary \ref{cor:every}, there exists a genuine MC element $\al_{MC} \in \Conv(\Ger^{\vee}, \Br)$ 
such that 
$$
\al^{[3]} = \al^{[3]}_{MC}\,.
$$
In other words, the truncation $\al^{[3]}$ shown in figures \ref{fig:Part1}, \ref{fig:Part11}, and \ref{fig:lastPart}
can be extended to a genuine MC element in $\Conv(\Ger^{\vee}, \Br)$. 

We would like to remark that, if we subtract the blue terms (in figure \ref{fig:Part1}) from $\al^{[3]}$, then
the resulting degree $1$ element $\ti{\al}$ will still be a third MC-sprout. However, we 
proved\footnote{The verification of this fact on a modern MacAir using \cite{Software} took approximately 5 hours.}
that $\ti{\al}$ is not a truncation of any $4$-th MC-sprout in $\Conv(\Ger^{\vee}, \Br)$. So $\ti{\al}$ cannot be extended 
to a genuine MC element in $\Conv(\Ger^{\vee}, \Br)$.

%
%
\begin{figure}[htp] 
\centering 
\begin{tikzpicture}[scale=0.5]
\tikzstyle{lab}=[circle, draw, minimum size=5, inner sep=1]
\tikzstyle{n}=[circle, draw, fill, minimum size=5, inner sep=1]
\tikzstyle{root}=[circle, draw, fill, minimum size=0, inner sep=1]
\begin{scope}[shift={(0,5.5)}] 
\draw (-1.2,0) node[anchor=center] {{$ \frac{1}{2}$}};
\node [n] (n) at (0,0) {};
\node [lab] (v1) at (-0.4,0.6) {\tiny $1$};
\node [lab] (v2) at (0.4,0.6) {\tiny $2$};
\draw (n) edge (v1) edge (v2) edge (0, -0.4);
\draw (2.5,0) node[anchor=center] {{\small $\otimes ~ \{ b_1, b_2 \}$}};
\end{scope}
\begin{scope}[shift={(5.5,5.5)}]
\draw (-1,0) node[anchor=center] {{$ + $}};
\node [lab] (v1) at (0,0) {\tiny $1$};
\node [lab] (v2) at (0, 0.7) {\tiny $2$};
\draw (v1) edge (v2) edge (0, -0.5);
\draw (1.5,0) node[anchor=center] {{\small $\otimes ~ b_1 b_2 $}};
\end{scope}
\begin{scope}[shift={(10.5,5.5)}]
\draw (-1.5,0) node[anchor=center] {{$ + \frac{1}{2}$}};
\node [lab] (v1) at (0,0) {\tiny $1$};
\node [lab] (v2) at (-0.4, 0.6) {\tiny $2$};
\node [lab] (v3) at (0.4, 0.6) {\tiny $3$};
\draw (v1) edge (v2) edge (v3) edge (0, -0.5);
\draw (2.5,0) node[anchor=center] {{\small $\otimes ~ b_1 \{b_2, b_3\} $}};
\end{scope}
\begin{scope}[shift={(17,5.5)}]
\draw (-1.5,0) node[anchor=center] {{$ - \frac{1}{3}$}};
\node [n] (n) at (0,0) {};
\node [lab] (v1) at (-0.7, 0.7) {\tiny $1$};
\node [lab] (v2) at (0, 0.7) {\tiny $2$};
\node [lab] (v3) at (0.7, 0.7) {\tiny $3$};
\draw (n) edge (v1) edge (v2) edge (v3) edge (0, -0.5);
\draw (3,0) node[anchor=center] {{\small $\otimes ~\{b_1, \{b_2, b_3 \}\}$}};
\end{scope}
\begin{scope}[shift={(2,3)}]
\draw (-1.5,0) node[anchor=center] {{$ - \frac{1}{6}$}};
\node [n] (n) at (0,0) {};
\node [lab] (v1) at (-0.7, 0.7) {\tiny $1$};
\node [lab] (v2) at (0, 0.7) {\tiny $2$};
\node [lab] (v3) at (0.7, 0.7) {\tiny $3$};
\draw (n) edge (v1) edge (v2) edge (v3) edge (0, -0.5);
\draw (3,0) node[anchor=center] {{\small $\otimes ~\{b_2, \{b_1, b_3 \}\}$}};
\end{scope}
\begin{scope}[shift={(9.5,3)}]
\draw (-1.5,0) node[anchor=center] {{$ - \frac{1}{12}$}};
\node [n] (n) at (0,0) {};
\node [lab] (v1) at (-0.4, 0.6) {\tiny $1$};
\node [lab] (v3) at (0.4, 0.6) {\tiny $3$};
\node [lab] (v2) at (-0.8, 1.2) {\tiny $2$};
\draw (n) edge (v1) edge (v3) edge (0, -0.5) (v1) edge (v2);
\draw (3,0) node[anchor=center] {{\small $\otimes ~\{b_1, \{b_2, b_3 \}\}$}};
\end{scope}
\begin{scope}[shift={(17,3)}]
\draw (-1.5,0) node[anchor=center] {{$ - \frac{1}{12}$}};
\node [n] (n) at (0,0) {};
\node [lab] (v1) at (-0.4, 0.6) {\tiny $1$};
\node [lab] (v2) at (0.4, 0.6) {\tiny $2$};
\node [lab] (v3) at (0.8, 1.2) {\tiny $3$};
\draw (n) edge (v1) edge (v2) edge (0, -0.5) (v2) edge (v3);
\draw (3,0) node[anchor=center] {{\small $\otimes ~\{b_2, \{b_1, b_3 \}\}$}};
\end{scope}
%
\begin{scope}[shift={(0,0)}] 
\draw (-2,0.3) node[anchor=center] {{$- \frac{1}{3}$}};
\node [lab] (v1) at (0,0) {\tiny $1$};
\node [lab] (v2) at (-1,1) {\tiny $2$};
\node [lab] (v3) at (0,1) {\tiny $3$};
\node [lab] (v4) at (1,1) {\tiny $4$};
\draw (v1) edge (v2) edge (v3) edge (v4) edge (0,-0.7);
\draw (4,0.3) node[anchor=center] {{\small $\otimes ~ b_1 \{b_2, \{ b_3, b_4 \} \}$}};
\end{scope}
\begin{scope}[shift={(10,0)}]
\draw (-2,0.3) node[anchor=center] {{$- \frac{1}{12}$}};
\node [lab] (v1) at (0,0) {\tiny $1$};
\node [lab] (v2) at (-0.5,0.7) {\tiny $2$};
\node [lab] (v3) at (-1,1.4) {\tiny $3$};
\node [lab] (v4) at (0.5,0.7) {\tiny $4$};
\draw (v1) edge (v2) edge (v4) edge (0,-0.7) (v2) edge (v3);
\draw (4,0.3) node[anchor=center] {{\small $\otimes ~ b_1 \{b_2, \{ b_3, b_4 \} \}$}};
\end{scope}
\begin{scope}[shift={(20,0)}]
\draw (-2,0.3) node[anchor=center] {{$- \frac{1}{6}$}};
\node [lab] (v1) at (0,0) {\tiny $1$};
\node [lab] (v2) at (-1,1) {\tiny $2$};
\node [lab] (v3) at (0,1) {\tiny $3$};
\node [lab] (v4) at (1,1) {\tiny $4$};
\draw (v1) edge (v2) edge (v3) edge (v4) edge (0,-0.7);
\draw (4,0.3) node[anchor=center] {{\small $\otimes ~ b_1 \{b_3, \{ b_2, b_4 \} \}$}};
\end{scope}
\begin{scope}[shift={(0,-3.5)}]
\draw (-2,0.3) node[anchor=center] {{$- \frac{1}{12}$}};
\node [lab] (v1) at (0,0) {\tiny $1$};
\node [lab] (v2) at (-0.5,0.7) {\tiny $2$};
\node [lab] (v4) at (1,1.4) {\tiny $4$};
\node [lab] (v3) at (0.5,0.7) {\tiny $3$};
\draw (v1) edge (v2) edge (v3) edge (0,-0.7) (v3) edge (v4);
\draw (4,0.3) node[anchor=center] {{\small $\otimes ~ b_1 \{b_3, \{ b_2, b_4 \} \}$}};
\end{scope}
\begin{scope}[shift={(10,-3.5)}]
\draw (-2,0.3) node[anchor=center] {{$+ \frac{1}{24}$}};
\node [lab] (v1) at (0,0) {\tiny $1$};
\node [lab] (v2) at (-0.5,0.7) {\tiny $2$};
\node [lab] (v3) at (-1,1.4) {\tiny $3$};
\node [lab] (v4) at (0.5,0.7) {\tiny $4$};
\draw (v1) edge (v2) edge (v4) edge (0,-0.7) (v2) edge (v3);
\draw (4,0.3) node[anchor=center] {{\small $\otimes ~ \{ b_1, b_2\} \{ b_3, b_4 \}$}};
\end{scope}
\begin{scope}[shift={(20,-3.5)}]
\draw (-2,0.3) node[anchor=center] {{$+ \frac{1}{24}$}};
\node [lab] (v1) at (0,0) {\tiny $1$};
\node [lab] (v2) at (-0.5,0.7) {\tiny $2$};
\node [lab] (v3) at (-1,1.4) {\tiny $3$};
\node [lab] (v4) at (0.5,0.7) {\tiny $4$};
\draw (v1) edge (v2) edge (v4) edge (0,-0.7) (v2) edge (v3);
\draw (4,0.3) node[anchor=center] {{\small $\otimes ~ \{ b_1, b_3\} \{ b_2, b_4 \}$}};
\end{scope}
\begin{scope}[shift={(0,-7)}]
\draw (-2,0.3) node[anchor=center] {{$+ \frac{1}{24}$}};
\node [lab] (v1) at (0,0) {\tiny $1$};
\node [lab] (v2) at (-0.5,0.7) {\tiny $2$};
\node [lab] (v4) at (1,1.4) {\tiny $4$};
\node [lab] (v3) at (0.5,0.7) {\tiny $3$};
\draw (v1) edge (v2) edge (v3) edge (0,-0.7) (v3) edge (v4);
\draw (4,0.3) node[anchor=center] {{\small $\otimes ~ \{ b_1, b_3 \} \{ b_2, b_4 \} $}};
\end{scope}
\begin{scope}[shift={(10,-7)}]
\draw (-2,0.3) node[anchor=center] {{$+ \frac{1}{24}$}};
\node [lab] (v1) at (0,-0.3) {\tiny $1$};
\node [lab] (v2) at (0,0.4) {\tiny $2$};
\node [lab] (v3) at (0,1.1) {\tiny $3$};
\node [lab] (v4) at (0,1.8) {\tiny $4$};
\draw (v1) edge (v2) edge (0,-1) (v3) edge (v2) edge (v4);
\draw (4,0.3) node[anchor=center] {{\small $\otimes ~ \{ b_1, b_3 \} \{ b_2, b_4 \} $}};
\end{scope}
\begin{scope}[shift={(20,-7)}]
\draw (-2,0.3) node[anchor=center] {{$+ \frac{1}{24}$}};
\node [lab] (v1) at (0,0) {\tiny $1$};
\node [lab] (v2) at (-0.5,0.7) {\tiny $2$};
\node [lab] (v4) at (1,1.4) {\tiny $4$};
\node [lab] (v3) at (0.5,0.7) {\tiny $3$};
\draw (v1) edge (v2) edge (v3) edge (0,-0.7) (v3) edge (v4);
\draw (4,0.3) node[anchor=center] {{\small $\otimes ~ \{ b_1, b_4 \} \{ b_2, b_3 \} $}};
\end{scope}
\begin{scope}[shift={(0,-10.5)}]
\draw (-2,0.3) node[anchor=center] {{$+ \frac{1}{24}$}};
\node [lab] (v1) at (0,-0.3) {\tiny $1$};
\node [lab] (v2) at (0,0.4) {\tiny $2$};
\node [lab] (v3) at (0,1.1) {\tiny $3$};
\node [lab] (v4) at (0,1.8) {\tiny $4$};
\draw (v1) edge (v2) edge (0,-1) (v3) edge (v2) edge (v4);
\draw (4,0.3) node[anchor=center] {{\small $\otimes ~ \{ b_1, b_4 \} \{ b_2, b_3 \} $}};
\end{scope}
\begin{scope}[shift={(10,-10.5)}, color = blue] 
\draw (-2,0.3) node[anchor=center] {{$+ \frac{1}{36}$}};
\node [lab] (v1) at (0,0) {\tiny $1$};
\node [lab] (v2) at (-0.5,0.7) {\tiny $2$};
\node [lab] (v4) at (1,1.4) {\tiny $4$};
\node [lab] (v3) at (0.5,0.7) {\tiny $3$};
\draw (v1) edge (v2) edge (v3) edge (0,-0.7) (v3) edge (v4);
\draw (4,0.3) node[anchor=center] {{\small $\otimes ~ \{ b_1, b_2 \} \{ b_3, b_4 \} $}};
\end{scope}
\begin{scope}[shift={(20,-10.5)}, color = blue] 
\draw (-2,0.3) node[anchor=center] {{$+ \frac{1}{36}$}};
\node [lab] (v1) at (0,-0.3) {\tiny $1$};
\node [lab] (v2) at (0,0.4) {\tiny $2$};
\node [lab] (v3) at (0,1.1) {\tiny $3$};
\node [lab] (v4) at (0,1.8) {\tiny $4$};
\draw (v1) edge (v2) edge (0,-1) (v3) edge (v2) edge (v4);
\draw (4,0.3) node[anchor=center] {{\small $\otimes ~ \{ b_1, b_2 \} \{ b_3, b_4 \} $}};
\end{scope}
\begin{scope}[shift={(0,-14)}, color = blue] 
\draw (-2,0.3) node[anchor=center] {{$+ \frac{1}{36}$}};
\node [lab] (v1) at (0,0) {\tiny $1$};
\node [lab] (v2) at (-0.5,0.7) {\tiny $2$};
\node [lab] (v3) at (-1,1.4) {\tiny $3$};
\node [lab] (v4) at (0.5,0.7) {\tiny $4$};
\draw (v1) edge (v2) edge (v4) edge (0,-0.7) (v2) edge (v3);
\draw (4,0.3) node[anchor=center] {{\small $\otimes ~ \{ b_1, b_4 \} \{ b_2, b_3 \} $}};
\end{scope}
\begin{scope}[shift={(10,-14)}]
\draw (-2,0.3) node[anchor=center] {{$- \frac{1}{4}$}};
\node [n] (n) at (0,0) {};
\node [lab] (v1) at (-1.5,1.3) {\tiny $1$};
\node [lab] (v2) at (-0.6,1.3) {\tiny $2$};
\node [lab] (v3) at (0.6,1.3) {\tiny $3$};
\node [lab] (v4) at (1.5,1.3) {\tiny $4$};
\draw (n) edge (v1) edge (v2) edge (v3) edge (v4) edge (0,-0.7);
\draw (4.2,0.3) node[anchor=center] {{\small $\otimes ~ \{ b_1, \{ b_2, \{ b_3, b_4 \} \} \} $}};
\end{scope}
\begin{scope}[shift={(20,-14)}]
\draw (-2,0.3) node[anchor=center] {{$- \frac{13}{144}$}};
\node [n] (n) at (0,0) {};
\node [lab] (v1) at (-1,1) {\tiny $1$};
\node [lab] (v2) at (-1.6,1.7) {\tiny $2$};
\node [lab] (v3) at (0,1) {\tiny $3$};
\node [lab] (v4) at (1,1) {\tiny $4$};
\draw (n) edge (v1)  edge (v3) edge (v4) edge (0,-0.9) (v1) edge (v2);
\draw (4,0.3) node[anchor=center] {{\small $\otimes ~ \{ b_1, \{ b_2, \{ b_3, b_4 \} \} \} $}};
\end{scope}
\begin{scope}[shift={(0,-17.5)}]
\draw (-2,0.3) node[anchor=center] {{$- \frac{5}{48}$}};
\node [n] (n) at (0,0) {};
\node [lab] (v1) at (-1,1) {\tiny $1$};
\node [lab] (v2) at (0,1) {\tiny $2$};
\node [lab] (v3) at (0,1.9) {\tiny $3$};
\node [lab] (v4) at (1,1) {\tiny $4$};
\draw (n) edge (v1)  edge (v2) edge (v4) edge (0,-0.5) (v2) edge (v3);
\draw (4,0.3) node[anchor=center] {{\small $\otimes ~ \{ b_1, \{ b_2, \{ b_3, b_4 \} \} \} $}};
\end{scope}
\begin{scope}[shift={(10,-17.5)}]
\draw (-2.1,0.3) node[anchor=center] {{$- \frac{137}{1440}$}};
\node [n] (n) at (0,0) {};
\node [lab] (v1) at (-0.5,0.7) {\tiny $1$};
\node [lab] (v2) at (-1,1.4) {\tiny $2$};
\node [lab] (v3) at (0,1.4) {\tiny $3$};
\node [lab] (v4) at (0.5,0.7) {\tiny $4$};
\draw (n) edge (v1)  edge (v4) edge (0,-0.5) (v1) edge (v2) edge (v3);
\draw (4,0.3) node[anchor=center] {{\small $\otimes ~ \{ b_1, \{ b_2, \{ b_3, b_4 \} \} \} $}};
\end{scope}
\begin{scope}[shift={(20,-17.5)}]
\draw (-2.1,0.3) node[anchor=center] {{$- \frac{1}{36}$}};
\node [lab] (v1) at (0,0) {\tiny $1$};
\node [n] (n) at (-0.5,0.7) {};
\node [lab] (v4) at (0.5,0.7) {\tiny $4$};
\node [lab] (v2) at (-1,1.4) {\tiny $2$};
\node [lab] (v3) at (0,1.4) {\tiny $3$};
\draw (v1)  edge (n) edge (0,-0.7) edge (v4) (n) edge (v2) edge (v3);
\draw (4,0.3) node[anchor=center] {{\small $\otimes ~ \{ b_1, \{ b_2, \{ b_3, b_4 \} \} \} $}};
\end{scope}
\begin{scope}[shift={(0,-21)}]
\draw (-2.1,0.3) node[anchor=center] {{$- \frac{7}{360}$}};
\node [n] (n) at (0,0) {};
\node [lab] (v1) at (-0.5,0.5) {\tiny $1$};
\node [lab] (v2) at (-1,1) {\tiny $2$};
\node [lab] (v3) at (-1.5,1.5) {\tiny $3$};
\node [lab] (v4) at (0.5,0.5) {\tiny $4$};
\draw (n)  edge (v1) edge (0,-0.5) edge (v4) (v2) edge (v1) edge (v3);
\draw (4,0.3) node[anchor=center] {{\small $\otimes ~ \{ b_1, \{ b_2, \{ b_3, b_4 \} \} \} $}};
\end{scope}
\begin{scope}[shift={(10,-21)}]
\draw (-2.1,0.3) node[anchor=center] {{$- \frac{1}{36}$}};
\node [lab] (v1) at (0,-0.1) {\tiny $1$};
\node [n] (n) at (0,0.7) {};
\node [lab] (v2) at (-0.7,1.4) {\tiny $2$};
\node [lab] (v3) at (0,1.4) {\tiny $3$};
\node [lab] (v4) at (0.7,1.4) {\tiny $4$};
\draw (v1) edge (0,-0.7) (n) edge (v1) edge (v2) edge (v3) edge (v4);
\draw (4,0.3) node[anchor=center] {{\small $\otimes ~ \{ b_1, \{ b_2, \{ b_3, b_4 \} \} \} $}};
\end{scope}
\begin{scope}[shift={(20,-21)}]
\draw (-2.1,0.3) node[anchor=center] {{$- \frac{1}{20}$}};
\node [lab] (v1) at (0,-0.1) {\tiny $1$};
\node [n] (n) at (0,0.7) {};
\node [lab] (v2) at (-0.5,1.4) {\tiny $2$};
\node [lab] (v3) at (-1, 2.1) {\tiny $3$};
\node [lab] (v4) at (0.5,1.4) {\tiny $4$};
\draw (v1) edge (0,-0.7) (n) edge (v1) edge (v2) edge (v4) (v2) edge (v3);
\draw (4,0.3) node[anchor=center] {{\small $\otimes ~ \{ b_1, \{ b_2, \{ b_3, b_4 \} \} \} $}};
\end{scope}
\begin{scope}[shift={(0,-24)}]
\draw (-2,0.3) node[anchor=center] {{$- \frac{1}{9}$}};
\node [n] (n) at (0,0) {};
\node [lab] (v1) at (-1.5,1.3) {\tiny $1$};
\node [lab] (v2) at (-0.6,1.3) {\tiny $2$};
\node [lab] (v3) at (0.6,1.3) {\tiny $3$};
\node [lab] (v4) at (1.5,1.3) {\tiny $4$};
\draw (n) edge (v1) edge (v2) edge (v3) edge (v4) edge (0,-0.5);
\draw (4.2,0.3) node[anchor=center] {{\small $\otimes ~ \{ b_1, \{ b_3, \{ b_2, b_4 \} \} \} $}};
\end{scope}
\begin{scope}[shift={(10,-24)}]
\draw (-2,0.3) node[anchor=center] {{$- \frac{7}{60}$}};
\node [n] (n) at (0,0) {};
\node [lab] (v1) at (-0.7,0.7) {\tiny $1$};
\node [lab] (v3) at (0,0.7) {\tiny $3$};
\node [lab] (v4) at (0.7,0.7) {\tiny $4$};
\node [lab] (v2) at (-1.2,1.2) {\tiny $2$};
\draw (n) edge (v1) edge (v3) edge (v4) edge (0,-0.5) (v1) edge (v2);
\draw (4.2,0.3) node[anchor=center] {{\small $\otimes ~ \{ b_1, \{ b_3, \{ b_2, b_4 \} \} \} $}};
\end{scope}
\begin{scope}[shift={(20,-24)}]
\draw (-2,0.3) node[anchor=center] {{$- \frac{11}{720}$}};
\node [n] (n) at (0,0) {};
\node [lab] (v1) at (-0.7,0.7) {\tiny $1$};
\node [lab] (v2) at (0,0.7) {\tiny $2$};
\node [lab] (v4) at (0.7,0.7) {\tiny $4$};
\node [lab] (v3) at (0,1.4) {\tiny $3$};
\draw (n) edge (v1) edge (v2) edge (v4) edge (0,-0.5) (v2) edge (v3);
\draw (4.2,0.3) node[anchor=center] {{\small $\otimes ~ \{ b_1, \{ b_3, \{ b_2, b_4 \} \} \} $}};
\end{scope}
\begin{scope}[shift={(0,-27.5)}]
\draw (-2,0.3) node[anchor=center] {{$- \frac{157}{1440}$}};
\node [n] (n) at (0,0) {};
\node [lab] (v1) at (-0.5,0.7) {\tiny $1$};
\node [lab] (v2) at (-1,1.4) {\tiny $2$};
\node [lab] (v3) at (0,1.4) {\tiny $3$};
\node [lab] (v4) at (0.5,0.7) {\tiny $4$};
\draw (n) edge (v1)  edge (v4) edge (0,-0.5) (v1) edge (v2) edge (v3);
\draw (4.2,0.3) node[anchor=center] {{\small $\otimes ~ \{ b_1, \{ b_3, \{ b_2, b_4 \} \} \} $}};
\end{scope}
\begin{scope}[shift={(10,-27.5)}]
\draw (-2,0.3) node[anchor=center] {{$- \frac{1}{18}$}};
\node [lab] (v1) at (0,0) {\tiny $1$};
\node [n] (n) at (-0.5,0.7) {};
\node [lab] (v4) at (0.5,0.7) {\tiny $4$};
\node [lab] (v2) at (-1,1.4) {\tiny $2$};
\node [lab] (v3) at (0,1.4) {\tiny $3$};
\draw (v1)  edge (n) edge (0,-0.7) edge (v4) (n) edge (v2) edge (v3);
\draw (4.2,0.3) node[anchor=center] {{\small $\otimes ~ \{ b_1, \{ b_3, \{ b_2, b_4 \} \} \} $}};
\end{scope}
\begin{scope}[shift={(21,-27.5)}]
\draw (-2.5,0.3) node[anchor=center] {{$- \frac{11}{480}$}};
\node [n] (n) at (0,0) {};
\node [lab] (v1) at (-0.5,0.5) {\tiny $1$};
\node [lab] (v2) at (-1,1) {\tiny $2$};
\node [lab] (v3) at (-1.5,1.5) {\tiny $3$};
\node [lab] (v4) at (0.5,0.5) {\tiny $4$};
\draw (n)  edge (v1) edge (0,-0.5) edge (v4) (v2) edge (v1) edge (v3);
\draw (4.2,0.3) node[anchor=center] {{\small $\otimes ~ \{ b_1, \{ b_3, \{ b_2, b_4 \} \} \} $}};
\end{scope}
\begin{scope}[shift={(0,-31)}]
\draw (-2,0.3) node[anchor=center] {{$-\frac{1}{18}$}};
\node [n] (n) at (0,0) {};
\node [lab] (v1) at (-0.7,0.7) {\tiny $1$};
\node [lab] (v2) at (0,0.7) {\tiny $2$};
\node [lab] (v3) at (0.7,0.7) {\tiny $3$};
\node [lab] (v4) at (1.3,1.3) {\tiny $4$};
\draw (n)  edge (v1) edge (v2) edge (v3) edge (0,-0.5) (v3) edge (v4);
\draw (4.2,0.3) node[anchor=center] {{\small $\otimes ~ \{ b_1, \{ b_3, \{ b_2, b_4 \} \} \} $}};
\end{scope}
\begin{scope}[shift={(10,-31)}]
\draw (-2,0.3) node[anchor=center] {{$-\frac{1}{16}$}};
\node [n] (n) at (0,0) {};
\node [lab] (v1) at (-0.5,0.7) {\tiny $1$};
\node [lab] (v3) at (0.5,0.7) {\tiny $3$};
\node [lab] (v2) at (-1,1.4) {\tiny $2$};
\node [lab] (v4) at (1,1.4) {\tiny $4$};
\draw (n)  edge (v1) edge (v3) edge (0,-0.5) (v1) edge (v2) (v3) edge (v4);
\draw (4.2,0.3) node[anchor=center] {{\small $\otimes ~ \{ b_1, \{ b_3, \{ b_2, b_4 \} \} \} $}};
\end{scope}
\begin{scope}[shift={(20,-31)}]
\draw (-2,0.3) node[anchor=center] {{$-\frac{1}{45}$}};
\node [lab] (v1) at (0,0) {\tiny $1$};
\node [lab] (v2) at (-0.5,0.7) {\tiny $2$};
\node [n] (n) at (0.5,0.7) {};
\node [lab] (v3) at (0,1.4) {\tiny $3$};
\node [lab] (v4) at (1,1.4) {\tiny $4$};
\draw (v1) edge (v2) edge (n) edge (0,-0.5) (n) edge (v3) edge (v4);
\draw (4.2,0.3) node[anchor=center] {{\small $\otimes ~ \{ b_1, \{ b_3, \{ b_2, b_4 \} \} \} $}};
\end{scope}
\begin{scope}[shift={(0,-34.5)}]
\draw (-2,0.3) node[anchor=center] {{$-\frac{13}{720}$}};
\node [lab] (v1) at (0,-0.1) {\tiny $1$};
\node [n] (n) at (0,0.7) {};
\node [lab] (v2) at (-0.7,1.4) {\tiny $2$};
\node [lab] (v3) at (0,1.4) {\tiny $3$};
\node [lab] (v4) at (0.7,1.4) {\tiny $4$};
\draw (v1) edge (0,-0.6) (n) edge (v1) edge (v2) edge (v3) edge (v4);
\draw (4.2,0.3) node[anchor=center] {{\small $\otimes ~ \{ b_1, \{ b_3, \{ b_2, b_4 \} \} \} $}};
\end{scope}
\begin{scope}[shift={(10,-34.5)}]
\draw (-2,0.3) node[anchor=center] {{$-\frac{1}{24}$}};
\node [lab] (v1) at (0,-0.1) {\tiny $1$};
\node [n] (n) at (0,0.7) {};
\node [lab] (v2) at (-0.5,1.4) {\tiny $2$};
\node [lab] (v3) at (-1, 2.1) {\tiny $3$};
\node [lab] (v4) at (0.5,1.4) {\tiny $4$};
\draw (v1) edge (0,-0.7) (n) edge (v1) edge (v2) edge (v4) (v2) edge (v3);
\draw (4.2,0.3) node[anchor=center] {{\small $\otimes ~ \{ b_1, \{ b_3, \{ b_2, b_4 \} \} \} $}};
\end{scope}
\begin{scope}[shift={(20,-34.5)}]
\draw (-2,0.3) node[anchor=center] {{$-\frac{1}{144}$}};
\node [lab] (v1) at (0,-0.1) {\tiny $1$};
\node [n] (n) at (0,0.7) {};
\node [lab] (v2) at (-0.5,1.4) {\tiny $2$};
\node [lab] (v3) at (0.5, 1.4) {\tiny $3$};
\node [lab] (v4) at (1,2.1) {\tiny $4$};
\draw (v1) edge (0,-0.7) (n) edge (v1) edge (v2) edge (v3) (v3) edge (v4);
\draw (4.2,0.3) node[anchor=center] {{\small $\otimes ~ \{ b_1, \{ b_3, \{ b_2, b_4 \} \} \} $}};
\end{scope}
\end{tikzpicture}
\caption{The first part of $\al_4$} \label{fig:Part1}
\end{figure}
%
%
\begin{figure}[htp] 
\centering 
\begin{tikzpicture}[scale=0.5]
\tikzstyle{lab}=[circle, draw, minimum size=5, inner sep=1]
\tikzstyle{n}=[circle, draw, fill, minimum size=5, inner sep=1]
\tikzstyle{root}=[circle, draw, fill, minimum size=0, inner sep=1]
\begin{scope}[shift={(0,7)}]
\draw (-2,0.3) node[anchor=center] {{$+ \frac{7}{144}$}};
\node [n] (n) at (0,0) {};
\node [lab] (v1) at (-0.7,0.7) {\tiny $1$};
\node [lab] (v3) at (0,0.7) {\tiny $3$};
\node [lab] (v4) at (0.7,0.7) {\tiny $4$};
\node [lab] (v2) at (-1.2,1.2) {\tiny $2$};
\draw (n) edge (v1) edge (v3) edge (v4) edge (0,-0.5) (v1) edge (v2);
\draw (4.2,0.3) node[anchor=center] {{\small $\otimes ~ \{ b_2, \{ b_1, \{ b_3, b_4 \} \} \} $}};
\end{scope}
\begin{scope}[shift={(10,7)}]
\draw (-2,0.3) node[anchor=center] {{$+ \frac{1}{45}$}};
\node [n] (n) at (0,0) {};
\node [lab] (v1) at (-0.7,0.7) {\tiny $1$};
\node [lab] (v2) at (0,0.7) {\tiny $2$};
\node [lab] (v4) at (0.7,0.7) {\tiny $4$};
\node [lab] (v3) at (0,1.4) {\tiny $3$};
\draw (n) edge (v1) edge (v2) edge (v4) edge (0,-0.5) (v2) edge (v3);
\draw (4.2,0.3) node[anchor=center] {{\small $\otimes ~ \{ b_2, \{ b_1, \{ b_3, b_4 \} \} \} $}};
\end{scope}
\begin{scope}[shift={(20,7)}]
\draw (-2,0.3) node[anchor=center] {{$+ \frac{1}{6}$}};
\node [n] (n) at (0,0) {};
\node [lab] (v1) at (-1.5,1.3) {\tiny $1$};
\node [lab] (v2) at (-0.5,1.3) {\tiny $2$};
\node [lab] (v3) at (0.5,1.3) {\tiny $3$};
\node [lab] (v4) at (1.5,1.3) {\tiny $4$};
\draw (n) edge (v1) edge (v2) edge (v3) edge (v4) edge (0,-0.5);
\draw (4.2,0.3) node[anchor=center] {{\small $\otimes ~ \{ b_2, \{ b_3, \{ b_1, b_4 \} \} \} $}};
\end{scope}
\begin{scope}[shift={(0,3.5)}]
\draw (-2,0.3) node[anchor=center] {{$+ \frac{1}{45}$}};
\node [n] (n) at (0,0) {};
\node [lab] (v1) at (-0.7,0.7) {\tiny $1$};
\node [lab] (v3) at (0,0.7) {\tiny $3$};
\node [lab] (v4) at (0.7,0.7) {\tiny $4$};
\node [lab] (v2) at (-1.2,1.2) {\tiny $2$};
\draw (n) edge (v1) edge (v3) edge (v4) edge (0,-0.5) (v1) edge (v2);
\draw (4.2,0.3) node[anchor=center] {{\small $\otimes ~ \{ b_2, \{ b_3, \{ b_1, b_4 \} \} \} $}};
\end{scope}
\begin{scope}[shift={(10,3.5)}]
\draw (-2,0.3) node[anchor=center] {{$+ \frac{1}{8}$}};
\node [n] (n) at (0,0) {};
\node [lab] (v1) at (-0.7,0.7) {\tiny $1$};
\node [lab] (v2) at (0,0.7) {\tiny $2$};
\node [lab] (v4) at (0.7,0.7) {\tiny $4$};
\node [lab] (v3) at (0,1.4) {\tiny $3$};
\draw (n) edge (v1) edge (v2) edge (v4) edge (0,-0.5) (v2) edge (v3);
\draw (4.2,0.3) node[anchor=center] {{\small $\otimes ~ \{ b_2, \{ b_3, \{ b_1, b_4 \} \} \} $}};
\end{scope}
\begin{scope}[shift={(20,3.5)}]
\draw (-2,0.3) node[anchor=center] {{$+ \frac{1}{12}$}};
\node [n] (n) at (0,0) {};
\node [lab] (v1) at (-0.7,0.7) {\tiny $1$};
\node [lab] (v2) at (0,0.7) {\tiny $2$};
\node [lab] (v3) at (0.7,0.7) {\tiny $3$};
\node [lab] (v4) at (1.3,1.3) {\tiny $4$};
\draw (n)  edge (v1) edge (v2) edge (v3) edge (0,-0.5) (v3) edge (v4);
\draw (4.2,0.3) node[anchor=center] {{\small $\otimes ~ \{ b_2, \{ b_3, \{ b_1, b_4 \} \} \} $}};
\end{scope}
\begin{scope}[shift={(0,0)}]
\draw (-2,0.3) node[anchor=center] {{$+ \frac{1}{48}$}};
\node [n] (n) at (0,0) {};
\node [lab] (v1) at (-0.5,0.7) {\tiny $1$};
\node [lab] (v3) at (0.5,0.7) {\tiny $3$};
\node [lab] (v2) at (-1,1.4) {\tiny $2$};
\node [lab] (v4) at (1,1.4) {\tiny $4$};
\draw (n)  edge (v1) edge (v3) edge (0,-0.5) (v1) edge (v2) (v3) edge (v4);
\draw (4.2,0.3) node[anchor=center] {{\small $\otimes ~ \{ b_2, \{ b_3, \{ b_1, b_4 \} \} \} $}};
\end{scope}
\begin{scope}[shift={(10,0)}]
\draw (-2,0.3) node[anchor=center] {{$+ \frac{5}{72}$}};
\node [n] (n) at (0,0) {};
\node [lab] (v1) at (-0.5,0.7) {\tiny $1$};
\node [lab] (v2) at (0.5,0.7) {\tiny $2$};
\node [lab] (v3) at (0,1.4) {\tiny $3$};
\node [lab] (v4) at (1,1.4) {\tiny $4$};
\draw (n)  edge (v1) edge (v2) edge (0,-0.5) (v2) edge (v3) edge (v4);
\draw (4.2,0.3) node[anchor=center] {{\small $\otimes ~ \{ b_2, \{ b_3, \{ b_1, b_4 \} \} \} $}};
\end{scope}
\begin{scope}[shift={(20,0)}]
\draw (-2,0.3) node[anchor=center] {{$+ \frac{1}{24}$}};
\node [n] (n) at (0,0) {};
\node [lab] (v1) at (-0.4,0.6) {\tiny $1$};
\node [lab] (v2) at (0.4,0.6) {\tiny $2$};
\node [lab] (v3) at (0.8,1.2) {\tiny $3$};
\node [lab] (v4) at (1.2,1.8) {\tiny $4$};
\draw (n)  edge (v1) edge (v2) edge (0,-0.5) (v3) edge (v2) edge (v4);
\draw (4.2,0.3) node[anchor=center] {{\small $\otimes ~ \{ b_2, \{ b_3, \{ b_1, b_4 \} \} \} $}};
\end{scope}
\begin{scope}[shift={(0,-3.5)}]
\draw (-2,0.3) node[anchor=center] {{$+ \frac{1}{288}$}};
\node [n] (n) at (0,0) {};
\node [lab] (v1) at (-0.5,0.7) {\tiny $1$};
\node [lab] (v2) at (0.5,0.7) {\tiny $2$};
\node [lab] (v3) at (0,1.4) {\tiny $3$};
\node [lab] (v4) at (1,1.4) {\tiny $4$};
\draw (n)  edge (v1) edge (v2) edge (0,-0.5) (v2) edge (v3) edge (v4);
\draw (4.2,0.3) node[anchor=center] {{\small $\otimes ~ \{ b_1, \{ b_2, \{ b_3, b_4 \} \} \} $}};
\end{scope}
\begin{scope}[shift={(10,-3.5)}]
\draw (-2,0.3) node[anchor=center] {{$- \frac{1}{720}$}};
\node [n] (n) at (0,0) {};
\node [lab] (v1) at (-0.4,0.6) {\tiny $1$};
\node [lab] (v2) at (0.4,0.6) {\tiny $2$};
\node [lab] (v3) at (0.8,1.2) {\tiny $3$};
\node [lab] (v4) at (1.2,1.8) {\tiny $4$};
\draw (n)  edge (v1) edge (v2) edge (0,-0.5) (v3) edge (v2) edge (v4);
\draw (4.2,0.3) node[anchor=center] {{\small $\otimes ~ \{ b_1, \{ b_2, \{ b_3, b_4 \} \} \} $}};
\end{scope}
\begin{scope}[shift={(20,-3.5)}]
\draw (-2,0.3) node[anchor=center] {{$ + \frac{7}{180}$}};
\node [n] (n) at (0,0) {};
\node [lab] (v1) at (-0.5,0.7) {\tiny $1$};
\node [lab] (v2) at (0.5,0.7) {\tiny $2$};
\node [lab] (v3) at (0,1.4) {\tiny $3$};
\node [lab] (v4) at (1,1.4) {\tiny $4$};
\draw (n)  edge (v1) edge (v2) edge (0,-0.5) (v2) edge (v3) edge (v4);
\draw (4.2,0.3) node[anchor=center] {{\small $\otimes ~ \{ b_1, \{ b_3, \{ b_2, b_4 \} \} \} $}};
\end{scope}
\begin{scope}[shift={(0,-7)}]
\draw (-2,0.3) node[anchor=center] {{$- \frac{1}{288}$}};
\node [n] (n) at (0,0) {};
\node [lab] (v1) at (-0.4,0.6) {\tiny $1$};
\node [lab] (v2) at (0.4,0.6) {\tiny $2$};
\node [lab] (v3) at (0.8,1.2) {\tiny $3$};
\node [lab] (v4) at (1.2,1.8) {\tiny $4$};
\draw (n)  edge (v1) edge (v2) edge (0,-0.5) (v3) edge (v2) edge (v4);
\draw (4.2,0.3) node[anchor=center] {{\small $\otimes ~ \{ b_1, \{ b_3, \{ b_2, b_4 \} \} \} $}};
\end{scope}
\begin{scope}[shift={(10,-7)}]
\draw (-2,0.3) node[anchor=center] {{$ + \frac{1}{72}$}};
\node [n] (n) at (0,0) {};
\node [lab] (v1) at (-0.5,0.7) {\tiny $1$};
\node [lab] (v3) at (0.5,0.7) {\tiny $3$};
\node [lab] (v2) at (-1,1.4) {\tiny $2$};
\node [lab] (v4) at (1,1.4) {\tiny $4$};
\draw (n)  edge (v1) edge (v3) edge (0,-0.5) (v1) edge (v2) (v3) edge (v4);
\draw (4.2,0.3) node[anchor=center] {{\small $\otimes ~ \{ b_1, \{ b_2, \{ b_3, b_4 \} \} \} $}};
\end{scope}
\begin{scope}[shift={(20,-7)}]
\draw (-2,0.3) node[anchor=center] {{$ - \frac{1}{72}$}};
\node [n] (n) at (0,0) {};
\node [lab] (v1) at (-0.5,0.7) {\tiny $1$};
\node [lab] (v3) at (0.5,0.7) {\tiny $3$};
\node [lab] (v2) at (-1,1.4) {\tiny $2$};
\node [lab] (v4) at (1,1.4) {\tiny $4$};
\draw (n)  edge (v1) edge (v3) edge (0,-0.5) (v1) edge (v2) (v3) edge (v4);
\draw (4.2,0.3) node[anchor=center] {{\small $\otimes ~ \{ b_2, \{ b_1, \{ b_3, b_4 \} \} \} $}};
\end{scope}
\begin{scope}[shift={(0,-10.5)}]
\draw (-2,0.3) node[anchor=center] {{$ + \frac{1}{36}$}};
\node [n] (n) at (0,0) {};
\node [lab] (v1) at (-1.3,1.3) {\tiny $1$};
\node [lab] (v2) at (-0.4,1.3) {\tiny $2$};
\node [lab] (v3) at (0.4,1.3) {\tiny $3$};
\node [lab] (v4) at (1.3,1.3) {\tiny $4$};
\draw (n) edge (v1) edge (v2) edge (v3) edge (v4) edge (0,-0.5);
\draw (4.2,0.3) node[anchor=center] {{\small $\otimes ~ \{ b_2, \{ b_1, \{ b_3, b_4 \} \} \} $}};
\end{scope}
\begin{scope}[shift={(10,-10.5)}]
\draw (-2,0.3) node[anchor=center] {{$ + \frac{1}{120}$}};
\node [n] (n) at (0,0) {};
\node [lab] (v1) at (-0.5,0.7) {\tiny $1$};
\node [lab] (v2) at (-1,1.4) {\tiny $2$};
\node [lab] (v3) at (0,1.4) {\tiny $3$};
\node [lab] (v4) at (0.5,0.7) {\tiny $4$};
\draw (n) edge (v1)  edge (v4) edge (0,-0.5) (v1) edge (v2) edge (v3);
\draw (4.2,0.3) node[anchor=center] {{\small $\otimes ~ \{ b_2, \{ b_1, \{ b_3, b_4 \} \} \} $}};
\end{scope}
\begin{scope}[shift={(20,-10.5)}]
\draw (-2,0.3) node[anchor=center] {{$ + \frac{1}{36}$}};
\node [lab] (v1) at (0,0) {\tiny $1$};
\node [n] (n) at (-0.5,0.7) {};
\node [lab] (v4) at (0.5,0.7) {\tiny $4$};
\node [lab] (v2) at (-1,1.4) {\tiny $2$};
\node [lab] (v3) at (0,1.4) {\tiny $3$};
\draw (v1)  edge (n) edge (0,-0.7) edge (v4) (n) edge (v2) edge (v3);
\draw (4.2,0.3) node[anchor=center] {{\small $\otimes ~ \{ b_2, \{ b_1, \{ b_3, b_4 \} \} \} $}};
\end{scope}
\begin{scope}[shift={(0,-14)}]
\draw (-2,0.3) node[anchor=center] {{$ - \frac{1}{72}$}};
\node [lab] (v1) at (0,0) {\tiny $1$};
\node [lab] (v2) at (-0.5,0.7) {\tiny $2$};
\node [n] (n) at (0.5,0.7) {};
\node [lab] (v3) at (0.1,1.4) {\tiny $3$};
\node [lab] (v4) at (0.9,1.4) {\tiny $4$};
\draw (v1) edge (v2) edge (n) edge (0,-0.5) (n) edge (v3) edge (v4);
\draw (4.2,0.3) node[anchor=center] {{\small $\otimes ~ \{ b_2, \{ b_1, \{ b_3, b_4 \} \} \} $}};
\end{scope}
\begin{scope}[shift={(10,-14)}]
\draw (-1.8,0.3) node[anchor=center] {{$ + \frac{1}{72}$}};
\node [lab] (v1) at (0,0) {\tiny $1$};
\node [n] (n) at (0,0.7) {};
\node [lab] (v2) at (-0.7,1.4) {\tiny $2$};
\node [lab] (v3) at (0,1.4) {\tiny $3$};
\node [lab] (v4) at (0.7,1.4) {\tiny $4$};
\draw (v1) edge (0,-0.5) (n) edge (v1) edge (v2) edge (v3) edge (v4);
\draw (4.2,0.3) node[anchor=center] {{\small $\otimes ~ \{ b_2, \{ b_1, \{ b_3, b_4 \} \} \} $}};
\end{scope}
\begin{scope}[shift={(20,-14)}]
\draw (-1.8,0.3) node[anchor=center] {{$ - \frac{19}{720}$}};
\node [lab] (v1) at (0,-0.15) {\tiny $1$};
\node [lab] (v2) at (0,0.6) {\tiny $2$};
\node [n] (n) at (0,1.25) {};
\node [lab] (v3) at (-0.4,1.8) {\tiny $3$};
\node [lab] (v4) at (0.4,1.8) {\tiny $4$};
\draw (v1) edge (0,-0.7) edge (v2) (n) edge (v2) edge (v3) edge (v4);
\draw (4.2,0.3) node[anchor=center] {{\small $\otimes ~ \{ b_1, \{ b_3, \{ b_2, b_4 \} \} \} $}};
\end{scope}
\begin{scope}[shift={(-0.2,-17.5)}]
\draw (-2.3,0.3) node[anchor=center] {{$ +\frac{1}{288}$}};
\node [n] (n) at (0,0) {};
\node [lab] (v1) at (-0.5,0.5) {\tiny $1$};
\node [lab] (v2) at (-1,1) {\tiny $2$};
\node [lab] (v3) at (-1.5,1.5) {\tiny $3$};
\node [lab] (v4) at (0.5,0.5) {\tiny $4$};
\draw (n)  edge (v1) edge (0,-0.5) edge (v4) (v2) edge (v1) edge (v3);
\draw (4.2,0.3) node[anchor=center] {{\small $\otimes ~ \{ b_2, \{ b_1, \{ b_3, b_4 \} \} \} $}};
\end{scope}
\begin{scope}[shift={(10,-17.5)}]
\draw (-2,0.3) node[anchor=center] {{$ -\frac{11}{1440}$}};
\node [n] (n) at (0,0) {};
\node [lab] (v1) at (-0.5,0.7) {\tiny $1$};
\node [lab] (v2) at (-1,1.4) {\tiny $2$};
\node [lab] (v3) at (0,1.4) {\tiny $3$};
\node [lab] (v4) at (0.5,0.7) {\tiny $4$};
\draw (n) edge (v1)  edge (v4) edge (0,-0.5) (v1) edge (v2) edge (v3);
\draw (4.2,0.3) node[anchor=center] {{\small $\otimes ~ \{ b_2, \{ b_3, \{ b_1, b_4 \} \} \} $}};
\end{scope}
\begin{scope}[shift={(20.2,-17.5)}]
\draw (-2,0.3) node[anchor=center] {{$ -\frac{1}{180}$}};
\node [n] (n) at (0,0) {};
\node [lab] (v1) at (-0.7,0.7) {\tiny $1$};
\node [lab] (v2) at (0,0.7) {\tiny $2$};
\node [lab] (v3) at (0.7,0.7) {\tiny $3$};
\node [lab] (v4) at (1.3,1.3) {\tiny $4$};
\draw (n)  edge (v1) edge (v2) edge (v3) edge (0,-0.5) (v3) edge (v4);
\draw (4.2,0.3) node[anchor=center] {{\small $\otimes ~ \{ b_1, \{ b_2, \{ b_3, b_4 \} \} \} $}};
\end{scope}
\begin{scope}[shift={(0,-21)}]
\draw (-2,0.3) node[anchor=center] {{$ -\frac{1}{30}$}};
\node [lab] (v1) at (0,0) {\tiny $1$};
\node [n] (n) at (-0.5,0.7) {};
\node [lab] (v4) at (0.5,0.7) {\tiny $4$};
\node [lab] (v2) at (-1,1.4) {\tiny $2$};
\node [lab] (v3) at (0,1.4) {\tiny $3$};
\draw (v1)  edge (n) edge (0,-0.7) edge (v4) (n) edge (v2) edge (v3);
\draw (4.2,0.3) node[anchor=center] {{\small $\otimes ~ \{ b_2, \{ b_3, \{ b_1, b_4 \} \} \} $}};
\end{scope}
\begin{scope}[shift={(10,-21)}]
\draw (-2,0.3) node[anchor=center] {{$ + \frac{1}{30}$}};
\node [lab] (v1) at (0,0) {\tiny $1$};
\node [lab] (v2) at (-0.5,0.7) {\tiny $2$};
\node [n] (n) at (0.5,0.7) {};
\node [lab] (v3) at (0.1,1.4) {\tiny $3$};
\node [lab] (v4) at (0.9,1.4) {\tiny $4$};
\draw (v1) edge (v2) edge (n) edge (0,-0.5) (n) edge (v3) edge (v4);
\draw (4.2,0.3) node[anchor=center] {{\small $\otimes ~ \{ b_2, \{ b_3, \{ b_1, b_4 \} \} \} $}};
\end{scope}
\begin{scope}[shift={(20,-21)}]
\draw (-2,0.3) node[anchor=center] {{$ - \frac{1}{36}$}};
\node [lab] (v1) at (0,0) {\tiny $1$};
\node [n] (n) at (0,0.7) {};
\node [lab] (v2) at (-0.7,1.4) {\tiny $2$};
\node [lab] (v3) at (0,1.4) {\tiny $3$};
\node [lab] (v4) at (0.7,1.4) {\tiny $4$};
\draw (v1) edge (0,-0.5) (n) edge (v1) edge (v2) edge (v3) edge (v4);
\draw (4.2,0.3) node[anchor=center] {{\small $\otimes ~ \{ b_2, \{ b_3, \{ b_1, b_4 \} \} \} $}};
\end{scope}
\begin{scope}[shift={(0,-24.5)}]
\draw (-2,0.3) node[anchor=center] {{$ -\frac{7}{360}$}};
\node [lab] (v1) at (0,-0.1) {\tiny $1$};
\node [n] (n) at (0,0.7) {};
\node [lab] (v2) at (-0.5,1.4) {\tiny $2$};
\node [lab] (v3) at (-1, 2.1) {\tiny $3$};
\node [lab] (v4) at (0.5,1.4) {\tiny $4$};
\draw (v1) edge (0,-0.7) (n) edge (v1) edge (v2) edge (v4) (v2) edge (v3);
\draw (4.2,0.3) node[anchor=center] {{\small $\otimes ~ \{ b_2, \{ b_1, \{ b_3, b_4 \} \} \} $}};
\end{scope}
\begin{scope}[shift={(10,-24.5)}]
\draw (-1.8,0.3) node[anchor=center] {{$+ \frac{1}{180}$}};
\node [lab] (v1) at (0,-0.15) {\tiny $1$};
\node [lab] (v2) at (0,0.6) {\tiny $2$};
\node [n] (n) at (0,1.25) {};
\node [lab] (v3) at (-0.4,1.8) {\tiny $3$};
\node [lab] (v4) at (0.4,1.8) {\tiny $4$};
\draw (v1) edge (0,-0.7) edge (v2) (n) edge (v2) edge (v3) edge (v4);
\draw (4.2,0.3) node[anchor=center] {{\small $\otimes ~ \{ b_2, \{ b_3, \{ b_1, b_4 \} \} \} $}};
\end{scope}
\begin{scope}[shift={(20,-24.5)}]
\draw (-2,0.3) node[anchor=center] {{$- \frac{1}{72}$}};
\node [lab] (v1) at (0,0) {\tiny $1$};
\node [lab] (v2) at (-0.5,0.7) {\tiny $2$};
\node [n] (n) at (0.5,0.7) {};
\node [lab] (v3) at (0.1,1.4) {\tiny $3$};
\node [lab] (v4) at (0.9,1.4) {\tiny $4$};
\draw (v1) edge (v2) edge (n) edge (0,-0.5) (n) edge (v3) edge (v4);
\draw (4.2,0.3) node[anchor=center] {{\small $\otimes ~ \{ b_1, \{ b_2, \{ b_3, b_4 \} \} \} $}};
\end{scope}
\begin{scope}[shift={(0,-28)}]
\draw (-2,0.3) node[anchor=center] {{$ +\frac{1}{72}$}};
\node [n] (n) at (0,0) {};
\node [lab] (v1) at (-0.7,0.7) {\tiny $1$};
\node [lab] (v2) at (0,0.7) {\tiny $2$};
\node [lab] (v4) at (0.7,0.7) {\tiny $4$};
\node [lab] (v3) at (0,1.4) {\tiny $3$};
\draw (n) edge (v1) edge (v2) edge (v4) edge (0,-0.5) (v2) edge (v3);
\draw (4.2,0.3) node[anchor=center] {{\small $\otimes ~ \{ b_3, \{ b_1, \{ b_2, b_4 \} \} \} $}};
\end{scope}
\begin{scope}[shift={(10,-28)}]
\draw (-2,0.3) node[anchor=center] {{$ +\frac{5}{144}$}};
\node [lab] (v1) at (0,-0.1) {\tiny $1$};
\node [n] (n) at (0,0.7) {};
\node [lab] (v2) at (-0.5,1.4) {\tiny $2$};
\node [lab] (v3) at (0.5, 1.4) {\tiny $3$};
\node [lab] (v4) at (1,2.1) {\tiny $4$};
\draw (v1) edge (0,-0.6) (n) edge (v1) edge (v2) edge (v3) (v3) edge (v4);
\draw (4.2,0.3) node[anchor=center] {{\small $\otimes ~ \{ b_1, \{ b_2, \{ b_3, b_4 \} \} \} $}};
\end{scope}
\begin{scope}[shift={(20,-28)}]
\draw (-2,0.3) node[anchor=center] {{$ +\frac{7}{360}$}};
\node [n] (n) at (0,0) {};
\node [lab] (v1) at (-0.5,0.7) {\tiny $1$};
\node [lab] (v2) at (-1,1.4) {\tiny $2$};
\node [lab] (v3) at (0,1.4) {\tiny $3$};
\node [lab] (v4) at (0.5,0.7) {\tiny $4$};
\draw (n) edge (v1)  edge (v4) edge (0,-0.5) (v1) edge (v2) edge (v3);
\draw (4.2,0.3) node[anchor=center] {{\small $\otimes ~ \{ b_3, \{ b_1, \{ b_2, b_4 \} \} \} $}};
\end{scope}
\begin{scope}[shift={(0,-31.5)}]
\draw (-2,0.3) node[anchor=center] {{$- \frac{5}{144}$}};
\node [lab] (v1) at (0,-0.15) {\tiny $1$};
\node [lab] (v2) at (0,0.6) {\tiny $2$};
\node [n] (n) at (0,1.25) {};
\node [lab] (v3) at (-0.4,1.8) {\tiny $3$};
\node [lab] (v4) at (0.4,1.8) {\tiny $4$};
\draw (v1) edge (0,-0.7) edge (v2) (n) edge (v2) edge (v3) edge (v4);
\draw (4,0.3) node[anchor=center] {{\small $\otimes ~ \{ b_1, \{ b_2, \{ b_3, b_4 \} \} \} $}};
\end{scope}
\begin{scope}[shift={(10,-31.5)}]
\draw (-2,0.3) node[anchor=center] {{$+ \frac{23}{1440}$}};
\node [n] (n) at (0,0) {};
\node [lab] (v1) at (-0.5,0.5) {\tiny $1$};
\node [lab] (v2) at (-1,1) {\tiny $2$};
\node [lab] (v3) at (-1.5,1.5) {\tiny $3$};
\node [lab] (v4) at (0.5,0.5) {\tiny $4$};
\draw (n)  edge (v1) edge (0,-0.5) edge (v4) (v2) edge (v1) edge (v3);
\draw (4.2,0.3) node[anchor=center] {{\small $\otimes ~ \{ b_2, \{ b_3, \{ b_1, b_4 \} \} \} $}};
\end{scope}
\begin{scope}[shift={(20,-31.5)}]
\draw (-2,0.3) node[anchor=center] {{$- \frac{1}{288}$}};
\node [n] (n) at (0,0) {};
\node [lab] (v1) at (-0.5,0.5) {\tiny $1$};
\node [lab] (v2) at (-1,1) {\tiny $2$};
\node [lab] (v3) at (-1.5,1.5) {\tiny $3$};
\node [lab] (v4) at (0.5,0.5) {\tiny $4$};
\draw (n)  edge (v1) edge (0,-0.5) edge (v4) (v2) edge (v1) edge (v3);
\draw (4.2,0.3) node[anchor=center] {{\small $\otimes ~ \{ b_3, \{ b_1, \{ b_2, b_4 \} \} \} $}};
\end{scope}
\begin{scope}[shift={(0,-34)}]
\draw (-2,0.3) node[anchor=center] {{$- \frac{1}{36}$}};
\node [n] (n) at (0,0) {};
\node [lab] (v1) at (-0.7,0.7) {\tiny $1$};
\node [lab] (v2) at (0,0.7) {\tiny $2$};
\node [lab] (v3) at (0.7,0.7) {\tiny $3$};
\node [lab] (v4) at (1.3,1.3) {\tiny $4$};
\draw (n)  edge (v1) edge (v2) edge (v3) edge (0,-0.5) (v3) edge (v4);
\draw (4.2,0.3) node[anchor=center] {{\small $\otimes ~ \{ b_2, \{ b_1, \{ b_3, b_4 \} \} \} $}};
\end{scope}
\begin{scope}[shift={(10,-34)}]
\draw (-2,0.3) node[anchor=center] {{$+ \frac{1}{72}$}};
\node [n] (n) at (0,0) {};
\node [lab] (v1) at (-1.3,1.3) {\tiny $1$};
\node [lab] (v2) at (-0.4,1.3) {\tiny $2$};
\node [lab] (v3) at (0.4,1.3) {\tiny $3$};
\node [lab] (v4) at (1.3,1.3) {\tiny $4$};
\draw (n) edge (v1) edge (v2) edge (v3) edge (v4) edge (0,-0.5);
\draw (4.2,0.3) node[anchor=center] {{\small $\otimes ~ \{ b_3, \{ b_1, \{ b_2, b_4 \} \} \} $}};
\end{scope}
\begin{scope}[shift={(20,-34)}]
\draw (-2,0.3) node[anchor=center] {{$+ \frac{1}{36}$}};
\node [n] (n) at (0,0) {};
\node [lab] (v1) at (-0.7,0.7) {\tiny $1$};
\node [lab] (v2) at (0,0.7) {\tiny $2$};
\node [lab] (v3) at (0.7,0.7) {\tiny $3$};
\node [lab] (v4) at (1.3,1.3) {\tiny $4$};
\draw (n)  edge (v1) edge (v2) edge (v3) edge (0,-0.5) (v3) edge (v4);
\draw (4.2,0.3) node[anchor=center] {{\small $\otimes ~ \{ b_3, \{ b_1, \{ b_2, b_4 \} \} \} $}};
\end{scope}
\end{tikzpicture}
\caption{The second part of $\al_4$} \label{fig:Part11}
\end{figure}
%
%
\begin{figure}[htp] 
\centering 
\begin{tikzpicture}[scale=0.5]
\tikzstyle{lab}=[circle, draw, minimum size=5, inner sep=1]
\tikzstyle{n}=[circle, draw, fill, minimum size=5, inner sep=1]
\tikzstyle{root}=[circle, draw, fill, minimum size=0, inner sep=1]
\begin{scope}[shift={(0,7)}]
\draw (-2,0.3) node[anchor=center] {{$+\frac{1}{72}$}};
\node [n] (n) at (0,0) {};
\node [lab] (v1) at (-0.5,0.7) {\tiny $1$};
\node [lab] (v2) at (0.5,0.7) {\tiny $2$};
\node [lab] (v3) at (0,1.4) {\tiny $3$};
\node [lab] (v4) at (1,1.4) {\tiny $4$};
\draw (n)  edge (v1) edge (v2) edge (0,-0.5) (v2) edge (v3) edge (v4);
\draw (4.2,0.3) node[anchor=center] {{\small $\otimes ~ \{ b_3, \{ b_1, \{ b_2, b_4 \} \} \} $}};
\end{scope}
\begin{scope}[shift={(10,7)}]
\draw (-2,0.3) node[anchor=center] {{$-\frac{1}{72}$}};
\node [lab] (v1) at (0,0) {\tiny $1$};
\node [lab] (v2) at (-0.5,0.7) {\tiny $2$};
\node [n] (n) at (0.5,0.7) {};
\node [lab] (v3) at (0.1,1.4) {\tiny $3$};
\node [lab] (v4) at (0.9,1.4) {\tiny $4$};
\draw (v1) edge (v2) edge (n) edge (0,-0.5) (n) edge (v3) edge (v4);
\draw (4.2,0.3) node[anchor=center] {{\small $\otimes ~ \{ b_3, \{ b_1, \{ b_2, b_4 \} \} \} $}};
\end{scope}
\begin{scope}[shift={(20,7)}]
\draw (-2,0.3) node[anchor=center] {{$+\frac{1}{72}$}};
\node [lab] (v1) at (0,0) {\tiny $1$};
\node [n] (n) at (0,0.7) {};
\node [lab] (v2) at (-0.7,1.4) {\tiny $2$};
\node [lab] (v3) at (0,1.4) {\tiny $3$};
\node [lab] (v4) at (0.7,1.4) {\tiny $4$};
\draw (v1) edge (0,-0.5) (n) edge (v1) edge (v2) edge (v3) edge (v4);
\draw (4.2,0.3) node[anchor=center] {{\small $\otimes ~ \{ b_3, \{ b_1, \{ b_2, b_4 \} \} \} $}};
\end{scope}
\begin{scope}[shift={(0,3.5)}]
\draw (-2,0.3) node[anchor=center] {{$+\frac{1}{72}$}};
\node [n] (n) at (0,0) {};
\node [lab] (v1) at (-0.4,0.6) {\tiny $1$};
\node [lab] (v2) at (0.4,0.6) {\tiny $2$};
\node [lab] (v3) at (0.8,1.2) {\tiny $3$};
\node [lab] (v4) at (1.2,1.8) {\tiny $4$};
\draw (n)  edge (v1) edge (v2) edge (0,-0.5) (v3) edge (v2) edge (v4);
\draw (4.2,0.3) node[anchor=center] {{\small $\otimes ~ \{ b_3, \{ b_1, \{ b_2, b_4 \} \} \} $}};
\end{scope}
\begin{scope}[shift={(10,3.5)}]
\draw (-2,0.3) node[anchor=center] {{$-\frac{1}{72}$}};
\node [n] (n) at (0,0) {};
\node [lab] (v1) at (-0.5,0.7) {\tiny $1$};
\node [lab] (v2) at (0.5,0.7) {\tiny $2$};
\node [lab] (v3) at (0,1.4) {\tiny $3$};
\node [lab] (v4) at (1,1.4) {\tiny $4$};
\draw (n)  edge (v1) edge (v2) edge (0,-0.5) (v2) edge (v3) edge (v4);
\draw (4.2,0.3) node[anchor=center] {{\small $\otimes ~ \{ b_2, \{ b_1, \{ b_3, b_4 \} \} \} $}};
\end{scope}
\begin{scope}[shift={(20,3.5)}]
\draw (-2,0.3) node[anchor=center] {{$-\frac{1}{72}$}};
\node [n] (n) at (0,0) {};
\node [lab] (v1) at (-1.3,1.3) {\tiny $1$};
\node [lab] (v2) at (-0.4,1.3) {\tiny $2$};
\node [lab] (v3) at (0.4,1.3) {\tiny $3$};
\node [lab] (v4) at (1.3,1.3) {\tiny $4$};
\draw (n) edge (v1) edge (v2) edge (v3) edge (v4) edge (0,-0.5);
\draw (4.2,0.3) node[anchor=center] {{\small $\otimes ~ \{ b_3, \{ b_2, \{ b_1, b_4 \} \} \} $}};
\end{scope}
\begin{scope}[shift={(0,0)}]
\draw (-2,0.3) node[anchor=center] {{$- \frac{1}{72}$}};
\node [n] (n) at (0,0) {};
\node [lab] (v1) at (-0.7,0.7) {\tiny $1$};
\node [lab] (v3) at (0,0.7) {\tiny $3$};
\node [lab] (v4) at (0.7,0.7) {\tiny $4$};
\node [lab] (v2) at (-1.2,1.2) {\tiny $2$};
\draw (n) edge (v1) edge (v3) edge (v4) edge (0,-0.5) (v1) edge (v2);
\draw (4.2,0.3) node[anchor=center] {{\small $\otimes ~ \{ b_3, \{ b_2, \{ b_1, b_4 \} \} \} $}};
\end{scope}
\begin{scope}[shift={(10,0)}]
\draw (-2,0.3) node[anchor=center] {{$- \frac{1}{72}$}};
\node [n] (n) at (0,0) {};
\node [lab] (v1) at (-0.4,0.6) {\tiny $1$};
\node [lab] (v2) at (0.4,0.6) {\tiny $2$};
\node [lab] (v3) at (0.8,1.2) {\tiny $3$};
\node [lab] (v4) at (1.2,1.8) {\tiny $4$};
\draw (n)  edge (v1) edge (v2) edge (0,-0.5) (v3) edge (v2) edge (v4);
\draw (4.2,0.3) node[anchor=center] {{\small $\otimes ~ \{ b_2, \{ b_1, \{ b_3, b_4 \} \} \} $}};
\end{scope}
\begin{scope}[shift={(20,0)}]
\draw (-2,0.3) node[anchor=center] {{$+ \frac{1}{72}$}};
\node [n] (n) at (0,0) {};
\node [lab] (v1) at (-0.7,0.7) {\tiny $1$};
\node [lab] (v2) at (0,0.7) {\tiny $2$};
\node [lab] (v4) at (0.7,0.7) {\tiny $4$};
\node [lab] (v3) at (0,1.4) {\tiny $3$};
\draw (n) edge (v1) edge (v2) edge (v4) edge (0,-0.5) (v2) edge (v3);
\draw (4.2,0.3) node[anchor=center] {{\small $\otimes ~ \{ b_3, \{ b_2, \{ b_1, b_4 \} \} \} $}};
\end{scope}
\begin{scope}[shift={(0,-3.5)}]
\draw (-2,0.3) node[anchor=center] {{$+\frac{5}{288}$}};
\node [n] (n) at (0,0) {};
\node [lab] (v1) at (-0.5,0.7) {\tiny $1$};
\node [lab] (v2) at (-1,1.4) {\tiny $2$};
\node [lab] (v3) at (0,1.4) {\tiny $3$};
\node [lab] (v4) at (0.5,0.7) {\tiny $4$};
\draw (n) edge (v1)  edge (v4) edge (0,-0.5) (v1) edge (v2) edge (v3);
\draw (4.2,0.3) node[anchor=center] {{\small $\otimes ~ \{ b_3, \{ b_2, \{ b_1, b_4 \} \} \} $}};
\end{scope}
\begin{scope}[shift={(10,-3.5)}]
\draw (-2,0.3) node[anchor=center] {{$-\frac{1}{720}$}};
\node [n] (n) at (0,0) {};
\node [lab] (v1) at (-0.5,0.5) {\tiny $1$};
\node [lab] (v2) at (-1,1) {\tiny $2$};
\node [lab] (v3) at (-1.5,1.5) {\tiny $3$};
\node [lab] (v4) at (0.5,0.5) {\tiny $4$};
\draw (n)  edge (v1) edge (0,-0.5) edge (v4) (v2) edge (v1) edge (v3);
\draw (4.2,0.3) node[anchor=center] {{\small $\otimes ~ \{ b_3, \{ b_2, \{ b_1, b_4 \} \} \} $}};
\end{scope}
\begin{scope}[shift={(20,-3.5)}]
\draw (-2,0.3) node[anchor=center] {{$-\frac{1}{36}$}};
\node [n] (n) at (0,0) {};
\node [lab] (v1) at (-0.7,0.7) {\tiny $1$};
\node [lab] (v2) at (0,0.7) {\tiny $2$};
\node [lab] (v3) at (0.7,0.7) {\tiny $3$};
\node [lab] (v4) at (1.3,1.3) {\tiny $4$};
\draw (n)  edge (v1) edge (v2) edge (v3) edge (0,-0.5) (v3) edge (v4);
\draw (4.2,0.3) node[anchor=center] {{\small $\otimes ~ \{ b_3, \{ b_2, \{ b_1, b_4 \} \} \} $}};
\end{scope}
\begin{scope}[shift={(0,-7)}]
\draw (-2,0.3) node[anchor=center] {{$-\frac{1}{72}$}};
\node [n] (n) at (0,0) {};
\node [lab] (v1) at (-0.5,0.7) {\tiny $1$};
\node [lab] (v3) at (0.5,0.7) {\tiny $3$};
\node [lab] (v2) at (-1,1.4) {\tiny $2$};
\node [lab] (v4) at (1,1.4) {\tiny $4$};
\draw (n)  edge (v1) edge (v3) edge (0,-0.5) (v1) edge (v2) (v3) edge (v4);
\draw (4.2,0.3) node[anchor=center] {{\small $\otimes ~ \{ b_3, \{ b_2, \{ b_1, b_4 \} \} \}. $}};
\end{scope}
\end{tikzpicture}
\caption{The last part of $\al_4$} \label{fig:lastPart}
\end{figure}

~~~~~
~~~~~
~~~~~

\newpage

~\\

\noindent\textsc{Department of Mathematics,
Temple University, \\
Wachman Hall Rm. 638\\
1805 N. Broad St.,\\
Philadelphia PA, 19122 USA \\
\emph{E-mail addresses:} {\bf vald@temple.edu}, {\bf geoffrey.schneider@temple.edu}}

\end{document}